\documentclass[a4paper, 11pt]{amsart}

\usepackage{amsmath,amsthm,amssymb,graphicx,pinlabel}
\usepackage[all]{xy}
\usepackage[dvipsnames]{xcolor} \SelectTips{cm}{}

\allowdisplaybreaks

\usepackage{hyperref}
\hypersetup{colorlinks=true,citecolor=brown,linkcolor=brown,urlcolor=black,filecolor=black}

\theoremstyle{definition}
\newtheorem{dfn}{Definition}[section]
\newtheorem{rem}[dfn]{Remark}
\newtheorem{example}[dfn]{Example}

\theoremstyle{plain}
\newtheorem{thm}[dfn]{Theorem}
\newtheorem{prop}[dfn]{Proposition}
\newtheorem{lem}[dfn]{Lemma}
\newtheorem{cor}[dfn]{Corollary}

\makeatletter
    
    \@addtoreset{equation}{section}
  \makeatother

\newcommand{\up}{\vspace{-0.5cm}}

\newcommand{\centre}[1]{\begin{array}{c} #1 \end{array}}

\newcommand{\hpi}{\widehat{\pi}}

\newcommand{\Q}{\mathbb{Q}}
\newcommand{\Z}{\mathbb{Z}}

\newcommand{\calI}{\mathcal{I}}
\newcommand{\calM}{\mathcal{M}}
\newcommand{\calC}{\mathcal{C}}
\newcommand{\calW}{\mathcal{W}}
\newcommand{\calT}{\mathcal{T}}

\newcommand{\frakL}{\mathfrak{L}}
\newcommand{\frakH}{\mathfrak{h}}
\newcommand{\frakM}{\mathfrak{M}}

\newcommand{\Aut}{\operatorname{Aut}}
\newcommand{\IAut}{\operatorname{IAut}}
\newcommand{\Hom}{\operatorname{Hom}}

\begin{document}

\title{Generalized Dehn twists on surfaces and~homology~cylinders}

\author{Yusuke Kuno}
\address{Department of Mathematics, Tsuda University,
2-1-1 Tsuda-machi, Kodaira-shi Tokyo 187-8577, Japan}
\email{kunotti@tsuda.ac.jp}

\author{Gw\'ena\"el Massuyeau}
\address{ {IMB}, Universit\'e  Bourgogne Franche-Comt\'e \& CNRS, 21000 Dijon, France }
\email{{gwenael.massuyeau@u-bourgogne.fr}}

\date{}

\subjclass[2010]{}
\keywords{}
\thanks{Y.K. is supported by JSPS KAKENHI 18K03308.  
 G.M. is partly supported by the  project ITIQ-3D (funded by the
``R\'egion Bourgogne Franche-Comt\'e'') and by the EIPHI Graduate School (contract ANR-17-EURE-0002).}

\begin{abstract}
Let $\Sigma$ be a compact oriented surface.
The Dehn twist along every simple closed curve $\gamma \subset \Sigma$ 
induces an automorphism of the fundamental group $\pi$ of $\Sigma$.
There are two possible ways to generalize such automorphisms if the curve $\gamma$ is allowed to have self-intersections.
One way is to consider the ``generalized Dehn twist'' along $\gamma$:
an automorphism of the Malcev completion of $\pi$ whose definition involves intersection operations  
and only depends on the homotopy class $[\gamma]\in \pi$ of~$\gamma$.
Another way is to choose in the usual cylinder $U:=\Sigma \times [-1,+1]$ a  knot~$L$  projecting onto $\gamma$,
to perform a surgery along $L$ so as to get a homology cylinder~$U_L$,
and let $U_L$ act on every nilpotent quotient $\pi/\Gamma_{j} \pi$ of~$\pi$
(where $\Gamma_j\pi$ denotes the subgroup of $\pi$ generated by commutators of length $j$).
In this paper, assuming that $[\gamma]$ is in $\Gamma_k \pi$ for some  $k\geq 2$,
we prove that (whatever the choice of $L$ is) the automorphism of~$\pi/\Gamma_{2k+1} \pi$ 
induced by $U_L$ agrees with the generalized Dehn twist along~$\gamma$
and we  explicitly compute this  automorphism in terms of $[\gamma]$ modulo ${\Gamma_{k+2}}\pi$.
 As applications, we obtain new formulas for certain evaluations of the Johnson homomorphisms
showing, in particular,  how to realize any element of their targets  
by some explicit homology cylinders and/or generalized Dehn~twists.
\end{abstract}

\maketitle

\vspace{-0.5cm}
{ \setcounter{tocdepth}{1} \tableofcontents}

\section{Introduction and statement of the results} \label{sec:intro}

\subsection{The Dehn--Nielsen representation of the mapping class group} \label{subsec:DN}

Let $\Sigma$ be a connected  compact  oriented surface. 
We assume for simplicity  that $\Sigma$ has exactly one boundary component,
although the discussion below and most of the results presented thereafter could be adapted to a surface 
with arbitrary  non-empty boundary.
The \emph{mapping class group} of the surface~$\Sigma$ 
$$
\calM := 
\big\{ \hbox{\small diffeomorphisms $f\colon \Sigma \to \Sigma$ such that $f\vert_{\partial \Sigma} =\operatorname{id}_{\partial \Sigma}$ }\big\}/\hbox{\small diffeotopy}
$$  
acts in the canonical way on the fundamental group
$\pi:= \pi_1(\Sigma,\star)$  based at a point $\star \in \partial \Sigma$.
By a classical result of Dehn and Nielsen, the resulting  homomorphism
$$
\rho \colon \calM \longrightarrow \Aut(\pi)
$$
is injective so that the mapping class group can be regarded as a subgroup of the automorphism group of  the free group $\pi$. 

The automorphism group of $\pi$ can be ``enlarged'' as  follows. 
The \emph{Malcev completion} of $\pi$ is  the inverse limit
\begin{equation} \label{eq:Malcev}
\hpi :=  \varprojlim_{k} \Big( \frac{\pi}{\Gamma_k \pi}  \otimes \Q \Big)
\end{equation}
where $\pi= \Gamma_1 \pi \supset \Gamma_2\pi \supset   \cdots$ denotes the lower central series of~$\pi$,
and the ``uniquely divisible closure'' of  a nilpotent group $N$ is denoted by $N\otimes \Q$ (see \cite{KM} for instance).
Equivalently \cite{Jennings,Quillen}, $\hpi$ can be understood  as the group-like part of the complete Hopf algebra
$$
\hat{A} := \varprojlim_{k} \frac{\Q[\pi]}{I^k}
$$
obtained by $I$-adic completion of the group algebra $A:=\Q[\pi]$, where $I$~denotes the augmentation ideal.
The filtration $\hat A \supset  \hat{I}^1 \supset  \hat{I}^2 \supset\cdots$ on~$\hat{A}$ given by completion
of the successive powers of $I$ induces a filtration $\hpi = \hpi_1 \supset \hpi_2 \supset \cdots$ on $\hpi$ 
defined by $\hpi_k:= \hpi \cap\big(1+\hat{I}^k\big)$:
let $\Aut(\hpi)$ be the group of its filtration-preserving automorphisms.
Being a free group, $\pi$ embeds into~$\hpi$, which implies injectivity for  the canonical map
$$
j\colon \Aut(\pi) \longrightarrow \Aut(\hpi).
$$

This paper is aimed at comparing two possible generalizations 
of the Dehn--Nielsen representation $j\circ \rho$  of the mapping class group  in the larger group $\Aut(\hpi)$.
Both generalizations have arisen recently from the study of the Johnson homomorphisms.

\subsection{Generalized Dehn twists and  homology cobordisms} \label{subsec:gDt_and_hc}

On the one hand, we recall that the group $\calM$ is generated by (right-handed) Dehn twists $t_\gamma$
along simple closed curves $\gamma \subset \Sigma$. 
The value of $t_\gamma \in \Aut(\pi)$ on (the homotopy class of) a loop  in  general position with $\gamma$ 
is, roughly speaking, obtained by inserting a copy of $\gamma^\pm$ at each intersection point of that loop with $\gamma$. 
For instance, on the first homology group
$$
H:= H_1(\Sigma;\Z) \cong  \pi/[\pi,\pi],
$$
one gets the well-known formula
\begin{equation} \label{eq:transvection}
t_\gamma(x) =x + \omega([\gamma],x)\cdot [\gamma] \quad \hbox{for all } x\in H
\end{equation}
where $\omega\colon H \times H \to \Z$ denotes the homology intersection form of $\Sigma$.
A~generalization of the transvection formula \eqref{eq:transvection} for
the  group $\pi$ itself  was obtained in \cite{KK14} (see also \cite{MT13} and~\cite{KK15}).
This formula asserts that the extension of $t_\gamma\colon \pi \to \pi $ 
(by linearity and continuity) to the complete algebra $\hat A$ 
is the exponential of a certain derivation
$$
 D_{\gamma}  \colon \hat A  \longrightarrow \hat A.
$$
The latter  is fully  defined from  the free homotopy class of $\gamma$ using certain intersection operations on $\pi$ which refine $\omega$ (see Section \ref{sec:gdt} for further detail).
Therefore a \emph{generalized Dehn twist} $t_\gamma$ can be defined as well  for any (possibly non-simple) loop $\gamma$ by setting
$$
t_\gamma := \exp(  D_{\gamma} ) \in \Aut( \hat A). 
$$
Note that, if $\gamma$ has self-intersections, this automorphism $t_\gamma$ is in general not induced by an element of $\calM$, 
i.e$.$ it does not necessarily  preserve $\pi \subset \hat A$
as shown in \cite{Ku13, MT13, KK15}.
Nevertheless $t_\gamma$ is known to preserve the {Malcev completion} of $\pi$.
Therefore, we can consider the subgroup
$$
\calW  \subset  \Aut(\hpi)
$$
that is generated by generalized Dehn twists. This is a first generalization of the Dehn--Nielsen representation:
\begin{equation} \label{eq:DN1}
\xymatrix{
\calM \ar[r]^-{\rho} \ar[d]_-{j\rho} & \Aut(\pi) \ar[d]^-j \\
\calW\, \ar@{^{(}->}[r] & \Aut(\hpi)
}
\end{equation}

On the other hand, we  consider \emph{homology cobordisms} of $\Sigma$ which are pairs $(C,c)$
consisting of a compact oriented 3-manifold   $C$ and  an orientation-preserving diffeomorphism
$c\colon {\partial (\Sigma \times [-1,+1])} \to \partial C$
such that the inclusion maps $c_{\pm}\colon \Sigma \to C$ defined by $c_\pm (x) := c(x,\pm 1)$ induce isomorphisms in homology.
Thus $C$ is a cobordism between two copies of $\Sigma$, namely $\partial_+C:= c_+(\Sigma)$ and  $\partial_-C:= c_-(\Sigma)$:
$$
\labellist
\scriptsize\hair 2pt
\pinlabel {$\partial_+ C$} [r] at 1 186
 \pinlabel {$\partial_-C$} [r] at 0 75
 \pinlabel {$C$}  at 92 128
 \pinlabel {$c_+$} [r] at 91 222
 \pinlabel {$c_-$} [r] at 90 38
 \pinlabel {$\Sigma$} [r] at 2 255
 \pinlabel {$\Sigma$} [r] at 3 4
\endlabellist
\centering
\includegraphics[scale=0.28]{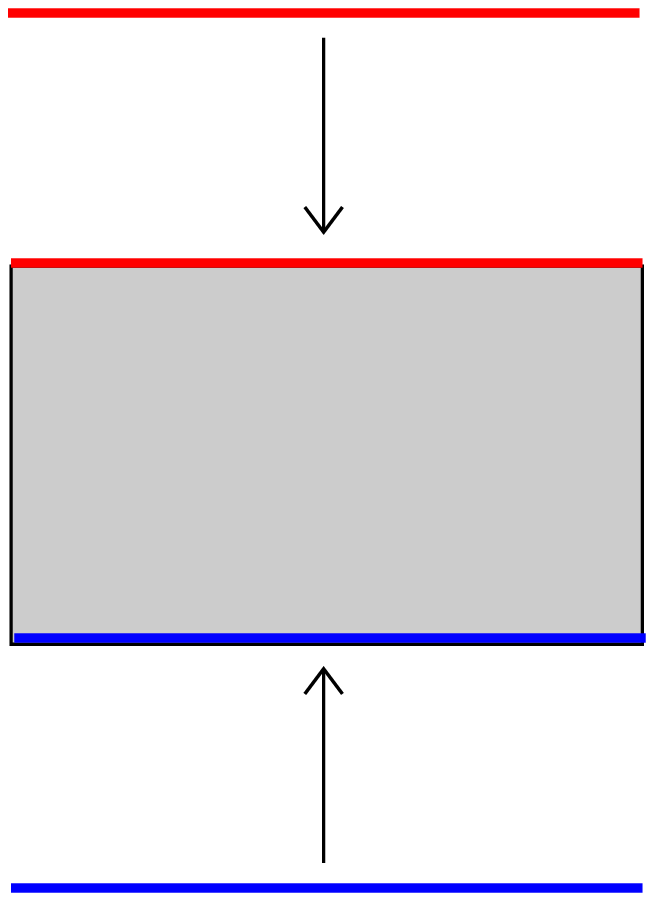}
$$
Two homology cobordisms $(C,c)$ and $(D,d)$ of $\Sigma$
can be \emph{multiplied} by gluing $D$ \lq\lq on the top of\rq\rq{} $C$, using the boundary parametrizations $d_-$ and $c_+$ to identify $\partial_- D$ with $\partial_+ C$:
$$
\centre{\labellist
\scriptsize\hair 2pt
 \pinlabel {$C$}  at 92 128
 \pinlabel {$c_+$} [r] at 91 222
 \pinlabel {$c_-$} [r] at 90 38
 \pinlabel {$\Sigma$} [r] at 2 255
 \pinlabel {$\Sigma$} [r] at 3 4
\endlabellist
\centering
\includegraphics[scale=0.28]{cobordism}} \quad \circ \quad  
\centre{\labellist
\scriptsize\hair 2pt
 \pinlabel {$D$}  at 92 128
 \pinlabel {$d_+$} [r] at 91 222
 \pinlabel {$d_-$} [r] at 90 38
 \pinlabel {$\Sigma$} [r] at 2 255
 \pinlabel {$\Sigma$} [r] at 3 4
\endlabellist
\includegraphics[scale=0.28]{cobordism}} 
\quad := \quad 
\centre{\centering
\labellist
\scriptsize\hair 2pt
 \pinlabel {$\Sigma$} [r]  at 3 362
 \pinlabel {$\Sigma$} [r] at 2 4
 \pinlabel {$D$}  at 93 232
 \pinlabel {$C$}  at 92 126
 \pinlabel {$c_{-}$} [r] at 92 37
 \pinlabel {$d_+$} [r] at 91 331
\endlabellist
\centering
\includegraphics[scale=0.28]{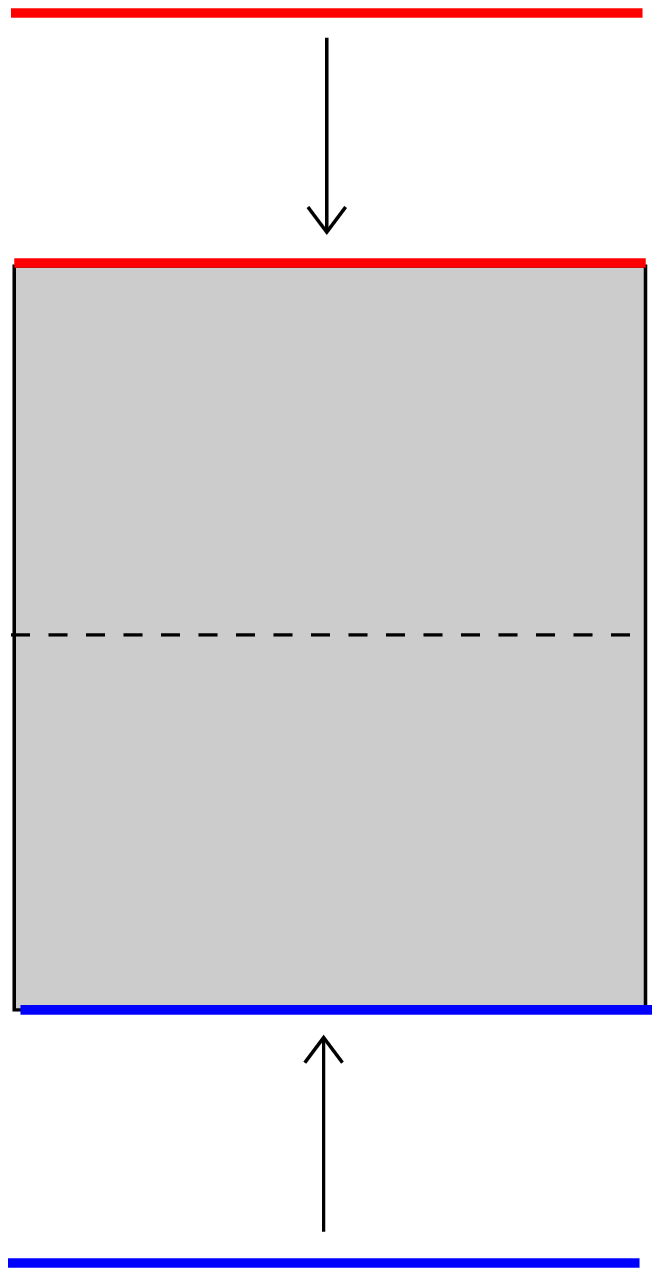}}
$$
With this operation $\circ$, the set 
$$
\calC := \{ \hbox{\small homology cobordisms of $\Sigma$} \} / \hbox{\small diffeomorphism}
$$
constitutes a monoid into which the mapping class group embeds by the ``mapping cylinder'' construction:
specifically, a diffeomorphism $f\colon \Sigma \to \Sigma$ defines a homology cobordism
whose underlying $3$-manifold is the cylinder $U:= \Sigma \times [-1,+1 ]$
and whose boundary parametrization  ${\partial (\Sigma \times [-1,+1])} \to {\partial U }$ is
given by $f$ on the top surface $\Sigma \times \{+1\}$ and by the identity elsewhere. 
Thus we  will view $\calM$ as a submonoid of $\calC$.
By Stallings' theorem~\cite{Sta}, for each $(C,c)\in \calC$ and  $k\ge 1$,
 the maps $c_+$ and $c_-$ induce isomorphisms between the $k$th nilpotent quotient of  $\pi$ and that of the fundamental group of $C$.
(Here and henceforth, a homology cobordism $C$ is 
based at any point of the arc $c(\{\star\}\times [-1,+1])$, which we will abusively denote by $\star\in C$.)
Thus there is a monoid homomorphism
\[
\rho_k \colon \calC \longrightarrow \Aut(\pi/\Gamma_{k+1}\pi),
\quad  (C,c)  \longmapsto \big( {(c_-)}^{-1} \circ c_+\big).
\]
Using \eqref{eq:Malcev},
the family $(\rho_k)_{k\geq 1}$ induces a homomorphism
$\hat\rho \colon \calC \to \Aut(\hpi)$, which gives a second generalization of the Dehn--Nielsen representation:
\begin{equation} \label{eq:DN2}
\xymatrix{
\calM \ar[r]^-{\rho} \ar@{^{(}->}[d]  & \Aut(\pi) \ar[d]^-j \\
\calC\, \ar[r]^-{\hat \rho} & \Aut(\hpi)
}
\end{equation}

\subsection{Main result of the paper}

In order to compare those two generalizations \eqref{eq:DN1} and \eqref{eq:DN2}
of the Dehn--Nielsen representation,
i.e$.$ to relate in some way  generalized Dehn twists to homology cobordisms, we need to add some homological conditions on both sides.

On the one hand, we  shall only consider generalized Dehn twists along \emph{null-homologous} closed curves.
On the other hand, we shall restrict ourselves to  \emph{homology cylinders}, 
which are homology cobordisms $(C,c)$  such that ${c_+=c_-}$ in homology.
The set
$$
\mathcal{IC} := \{ \hbox{\small homology cylinders of $\Sigma$} \} / \hbox{\small diffeomorphism}
$$
is a submonoid of $\calC$; its intersection with $\calM$ is the subgroup acting trivially in homology,
namely the \emph{Torelli group} $\calI$ of $\Sigma$. 

Let $\operatorname{pr}\colon U \to \Sigma$ denote the cartesian projection of $U= \Sigma \times [-1,+1]$.
A~\emph{knot resolution} of a closed curve  $\gamma \subset \Sigma$ is a  knot $L\subset U$ such that $\operatorname{pr}(L)=\gamma$.
Of course, $\gamma$ is null-homologous in $\Sigma$ if and only if $L$ is null-homologous in $U$
and, in that case,  the cobordism $U_L$ obtained from $U$ by  surgery along $L$ with  framing $+1$ or~$-1$ is  a homology cylinder.
The curve~$\gamma$ (resp$.$ the knot $L$) is said to be \emph{of nilpotency class $\geq k$} if, 
when it is oriented and connected to $\star$ by an arc (in an arbitrary way),
its homotopy class $[\gamma] \in \pi$ (resp$.$ $[L] \in \pi_1(U,\star) \cong  \pi$) belongs to $\Gamma_k \pi$.
With this terminology, we can state our most general result.\\

\noindent
\textbf{Theorem A.} {\it Let $k\geq 2$ be an integer 
and let $\gamma \subset \Sigma$  be a closed curve of nilpotency class ${\geq k}$.
Then, for any knot resolution $L\subset U$ of $\gamma$   with framing $\varepsilon \in \{-1,+1\}$,
 the following diagram is commutative:
$$
\xymatrix{
{\pi}/{\Gamma_{2k+1} \pi} \ar[d]_-{\rho_{2k}(U_L)}  \ar[r] & \hpi / \hpi_{2k+1} \ar[d]^-{(t_\gamma)^{\varepsilon}} \\
{\pi}/{\Gamma_{2k+1} \pi}   \ar[r] & \hpi / \hpi_{2k+1} 
}
$$
}

When $\gamma$ is  a bounding \emph{simple} closed curve in $\Sigma$, 
any knot resolution $L$ is obtained by pushing $\gamma \times \{ +1 \} \subset \Sigma \times [-1,+1]$ off the boundary,
and we know from Lickorish's trick that $U_L$ is merely the mapping cylinder of the ordinary Dehn twist $(t_\gamma)^{\varepsilon}$:
so  we have $$\rho_j(U_L)= (t_\gamma)^{\varepsilon} \in \Aut(\pi/\Gamma_j\pi) \hbox{ for any $j\geq 2$} $$ 
and Theorem A immediately follows in that case. 

However, the previous argument can not be extended to a curve $\gamma$ with \emph{self-intersection} points: 
in particular, it is not clear at all why $\rho_{2k}(U_L)$ should only depend on the homotopy class of $L$.
Actually, we will give two explicit formulas which compute $t_\gamma$ and $\rho_{2k}(U_L)$, respectively,  
in terms of $[\gamma]=[L]$ modulo $\Gamma_{k+2} \pi$:
see  Theorem \ref{thm:gdtaction} and Theorem \ref{thm:HCmonodromy} for precise statements.  
Then Theorem A will follow from the simple  observation that these two formulas are the same.

As an immediate consequence of Theorem A,  $t_\gamma \in \Aut(\hpi/\hpi_{2k+1})$
restricts to an automorphism of $\pi/\Gamma_{2k+1} \pi$ whenever $\gamma$ is of nilpotency class~${\geq k}$. More importantly,
Theorem A and its two companion formulas have applications to the study of Johnson homomorphisms.
We now briefly review the latter  referring  to the survey papers \cite{Satoh,HM} for further detail.

\subsection{Johnson homomorphisms} \label{subsec:Joh}

Recall first, from the works of Johnson and Morita \cite{Joh80,Joh83,Mor93}, that the \emph{Johnson filtration}
$$
\calM= \calM[0] \supset \calM[1] \supset \calM[2] \supset  \cdots  
$$
consists of the kernels of the actions of the mapping class group on the successive nilpotent quotients of $\pi$:
specifically, for all $j\geq 1$, we have
$$
 \calM[j] := \ker \big(\rho_j\colon \calM \longrightarrow \Aut(\pi/\Gamma_{j+1}\pi) \big).
$$
Furthermore, recall that the restriction of $\rho_{j+1}$ to  the $j$th term of this filtration
is encoded by the \emph{$j$th Johnson homomorphism}
$$
\tau_j\colon \calM[j] \longrightarrow \Hom\big(H, \Gamma_{j+1} \pi/\Gamma_{j+2}\pi \big) 
\cong H \otimes \big( \Gamma_{j+1} \pi/ \Gamma_{j+2}\pi \big) ;
$$
see Section \ref{sec:applications} for the definition.
Since $\pi$ is a free group, the associated graded of its lower central series (with its canonical structure of Lie ring)
can be identified with the graded  Lie ring
$$
\frakL = \bigoplus_{j=1}^{+\infty} \frakL_j
$$
freely generated by the abelian group $ \frakL_1  =H$: 
therefore the values of $\tau_j$ can be identified to  derivations of $\frakL$ that increase degrees by $j$.
We denote by 
$$
\frakH = \bigoplus_{j=1}^{+\infty} \frakH_j
$$
 the Lie ring of \emph{symplectic derivations},
i.e$.$ derivations of  $\frakL$ that strictly increase degrees and vanish on the bivector
$$
\omega \in \Lambda^2H \cong  \frakL_2
$$
dual to the homology intersection form. It turns out that $\tau_j$ is valued in~$\frakH_j$.
Then, an important and long-standing problem in the study of mapping class groups is to identify the image of $\tau_j\colon \calM[j] \to \frakH_j$.

The previous notions extend  directly to the framework of homology cobordisms, as observed by Garoufalidis and Levine \cite{GL}.
Thus the \emph{Johnson filtration} is defined as well on the monoid of homology cobordisms:
$$
\calC= \calC[0] \supset \calC[1] \supset \calC[2] \supset  \cdots  
$$
and, for every $j\geq 1$, there is also the \emph{$j$th Johnson homomorphism}
$$
\tau_j\colon \calC[j] \longrightarrow  \frakH_j
$$
which, in contrast with the case of $\calM$, turns out to be surjective \cite{GL,Hab}.
Note that $\calC[1]$ is the monoid $\calI\calC$ of homology cylinders,
and $\calM[1]= \calC[1] \cap \calM$ is the Torelli group $\calI$ of $\Sigma$.

The same notions  extend to the setting of generalized Dehn twists too, but it is then necessary to consider Malcev completions:
thus, by defining $\calW[j]$ as the kernel of the canonical homomorphism
$\calW \subset \Aut(\hpi) \to \Aut(\hpi/\hpi_{j+1})$ for every $j\geq 1$, we get
the \emph{Johnson filtration}
$$
\calW= \calW[0] \supset \calW[1] \supset \calW[2] \supset  \cdots ;
$$ 
then, for every $j\geq 1$,  the \emph{$j$th Johnson homomorphism}
$$
\tau_j\colon \calW[j] \longrightarrow  \frakH_j^\Q
$$
takes values in  the degree $j$ part of  the Lie algebra of \emph{symplectic derivations},
i.e$.$ derivations of the free Lie algebra $\frakL^\Q$ (with rational coefficients)
generated  by $H^\Q := H\otimes \Q$ that vanish on $\omega$.

\subsection{Jacobi diagrams} \label{subsec:Jacobi}

The Lie ring $\frakH$, and its rational version $\frakH^\Q$, have a very useful description in terms of ``Jacobi diagrams'',
which appear in Kontsevich's graph homology  and in the theory of finite-type invariants.
This description stems out from the simple fact that a linear combination~$T$
of planar binary rooted trees  with $j$ leaves colored by $H$ defines  an element $\operatorname{comm}(T)\in \frakL_j$, for instance:

$$
\operatorname{comm}\Bigg(\begin{array}{c}
\labellist \small \hair 2pt
\pinlabel {$h_1$} [b] at 2 162
\pinlabel {$h_2$} [b] at 111 162
\pinlabel {$h_3$} [b] at 167 162
\pinlabel {$h_4$} [b] at 222 162
\pinlabel {root} [t] at 108 5
\endlabellist
\includegraphics[scale=0.3]{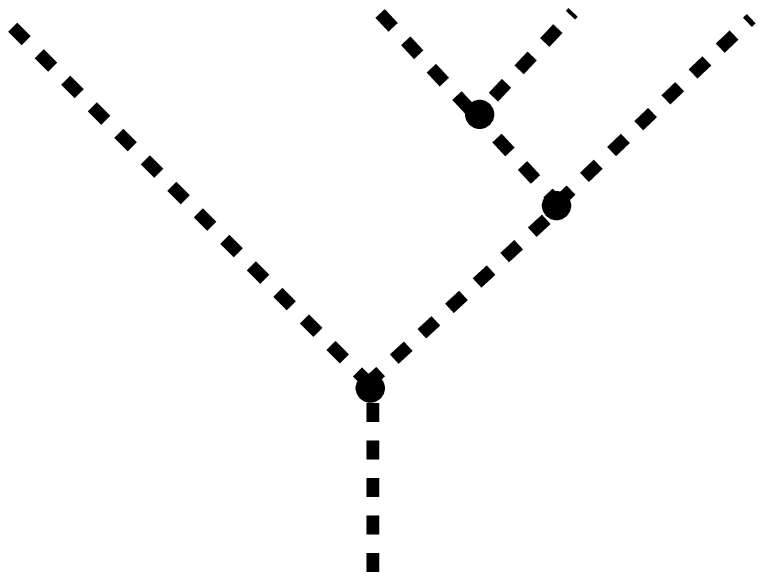} 
\end{array}\Bigg) =  [h_1,[[h_2,h_3],h_4]]
\quad \quad  (\hbox{with } h_1,\dots,h_4 \in H).
$$

A \emph{Jacobi diagram} is a unitrivalent graph whose trivalent vertices are oriented
(i.e$.$ edges are cyclically ordered around each trivalent vertex); it is said  \emph{$H$-colored} if it comes with a map
from the set of its univalent vertices to~$H$; its \emph{degree} is the number of trivalent vertices.
In this paper, we will always assume that Jacobi diagrams are finite, tree-shaped and connected. 
For example, here is a Jacobi diagram of degree $3$:

$$
{\labellist \small \hair 2pt
\pinlabel {$h_1$} [tr] at 0 0
\pinlabel {$h_2$} [br] at 0 59
\pinlabel {$h_3$} [b] at 69 76
\pinlabel {$h_4$} [bl] at 138 63
\pinlabel {$h_5$} [tl] at 135 0
\endlabellist}
\includegraphics[scale=0.3]{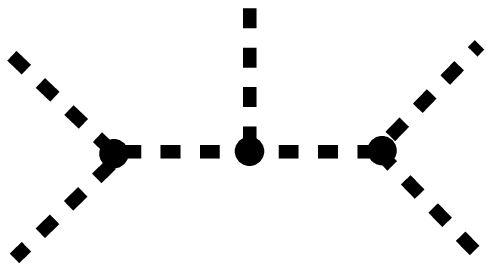}
\quad \quad \quad \quad  (\hbox{with } h_1,\dots,h_5 \in H)
$$
\vspace{-0.3cm}

\noindent
where, by convention, vertex orientations are given by the trigonometric orientation of the plan.
Let
$$
\calT = \bigoplus_{d=1}^{+ \infty} \calT_d
$$ 
be the graded abelian group of $H$-colored Jacobi diagrams modulo the AS, IHX and multilinearity relations: \\[0.2cm]

\begin{center}
\labellist \small \hair 2pt
\pinlabel {AS} [t] at 102 -5
\pinlabel {IHX} [t] at 543 -5
\pinlabel {multilinearity} [t] at 1036 -5
\pinlabel {$= \ -$}  at 102 46
\pinlabel {$-$} at 484 46
\pinlabel {$+$} at 606 46
\pinlabel {$=0$} at 721 46 
\pinlabel {$+$} at 1106 46
\pinlabel {$=$} at 961 46
\pinlabel{$h_1+h_2$} [b] at 881 89
\pinlabel{$h_1$} [b] at 1042 89
\pinlabel{$h_2$} [b] at 1170 89
\endlabellist
\centering
\includegraphics[scale=0.3]{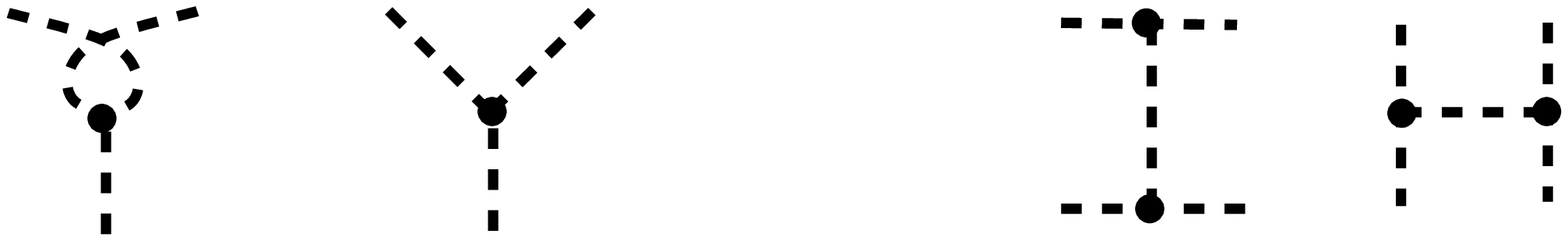}
\end{center}
\vspace{0.5cm}

With the above definitions, we can now introduce the homomorphism 
$
\eta_j \colon \calT_j {\longrightarrow}\ \frakH_j \subset H\otimes \frakL_{j+1}.
$
For any  $H$-colored Jacobi diagram $T$, set
$$
\eta_j(T) := \sum_{v} \operatorname{col}(v) \otimes \operatorname{comm}(T_v)
$$
where the sum is over all univalent vertices $v$ of $T$, $ \operatorname{col}(v) $ denotes the element of $H$ carried by $v$
and $T_v$ is the  tree $T$ rooted at $v$.
There is also a rational version of the previous map
$$
\eta_j^\Q \colon \calT_j^\Q {\longrightarrow} \frakH_j^\Q \subset H^\Q\otimes \frakL_{j+1}^\Q
$$
which is known to be an isomorphism  \cite{HabM,GL,Hab,HP}. 
As for the integral coefficients, Levine proved in \cite{Levine} that $\eta_{2k-1}$ is surjective for any $k\geq 1$:
then,  by the IHX relation, $\frakH_{2k-1}$ is generated by  the elements of the form
\begin{equation} \label{eq:gen_odd}
\eta_{2k-1} (\widetilde{x} \hbox{\textbf{\,-\! -\! -}\,} \widetilde{y}) \quad \hbox{for all } x \in \frakL_{k} \hbox{ and } y \in \frakL_{k+1}
\end{equation}
where  $\widetilde{x}$ (resp$.$ $\widetilde{y}$) is a linear combination of  $H$-colored  planar binary rooted trees such that $\operatorname{comm}(\widetilde{x})=x$
(resp$.$ $\operatorname{comm}(\widetilde{y})=y$),
and $\widetilde{x} \hbox{\textbf{\,-\! -\! -}\,} \widetilde{y} \in \calT_{2k-1}$ is  obtained by gluing ``root-to-root''  $\widetilde{x}$ and $\widetilde{y}$.
Moreover, for any $k\geq 2$, it  follows from the results of \cite{Levine} that $\frakH_{2k-2}$ is generated by  the elements of the form
\begin{equation} \label{eq:gen_even}
\frac{1}{2}\eta_{2k-2}(\widetilde{x} \hbox{\textbf{\,-\! -\! -}\,} \widetilde{x}) \quad \hbox{for all } x \in \frakL_{k}
\end{equation}
where  $\widetilde{x} \hbox{\textbf{\,-\! -\! -}\,} \widetilde{x} \in \calT_{2k-2}$ is  obtained by gluing ``root-to-root'' two copies of~$\widetilde{x}$.

\subsection{Values of the Johnson homomorphisms}

As was mentioned above, the Johnson homomorphism $\tau_j\colon \calM[j] \to \frakH_j$ is  not surjective in general,
but Garoufalidis and Levine, and later Habegger, proved that surjectivity 
holds for homology cylinders.
The proof of the surjectivity of  $\tau_j\colon \calC[j] \to \frakH_j$ 
given in~\cite{GL} is based on the adaptation of results in surgery theory to the dimension three.
(This is in a way similar to the realization
of ``nilpotent homotopy types'' of $3$-manifolds  by Turaev  \cite{Tu84}.)
As for the proof contained in \cite{Hab}, it reduces the problem of the surjectivity of the Johnson homomorphisms
to the surjectivity of the Milnor invariants, using a ``Milnor--Johnson'' correspondence between string-links and homology cobordisms 
through which the eponymous invariants correspond each other.

The surjectivity of the Milnor invariants for links is due to Orr \cite{Orr89};
next, Cochran showed how to realize any element in the target of Milnor invariants by an explicit link 
using the so-called ``Bing double'' construction \cite{Coc90,Coc91}; 
later, Habegger and  Lin showed the same realization results  for string links \cite{HL}.
Furthermore, it is known that Cochran's construction  can be reformulated 
in terms of surgeries along ``tree claspers'' in the sense of Habiro~\cite{Habiro}.

Here, as an application of Theorem A (and its  two companion formulas), 
we prove the surjectivity of  $\tau_j\colon \calC[j]  \to \frakH_j$ in an alternative way 
and in relation with the homomorphism $\tau_j\colon \calW[j] \to \frakH_j^\Q$.
Indeed, this surjectivity  will follow from the computation of $\tau_j$ 
for certain ``nilpotent'' surgeries on the usual cylinder $U= \Sigma \times [-1,+1]$.
This requires to distinguish the case where $j$ is even from the case where it is odd:\\

\noindent
\textbf{Theorem B.} {\it  Let $k\geq 2$ be an integer 
and let $\gamma \subset \Sigma$  be a closed curve of nilpotency class ${\geq k}$.
Let $L \subset U$ be a knot resolution of $\gamma$ with framing $\varepsilon \in \{-1,+1\}$.
Then, we have $U_L  \in \mathcal{C}[2k-2]$, $t_{\gamma} \in \calW[2k-2]$ and 
\begin{equation} \label{eq:even}
\tau_{2k-2} (U_L) = \tau_{2k-2}\big((t_{\gamma})^{\varepsilon}\big) 
=  \frac{\varepsilon}{2}\, \eta_{2k-2}\big( \{ [\gamma] \}_k \hbox{\textbf{\,-\! -\! -}\,} \{ [\gamma] \}_k\big)
\end{equation}
where $\{ [\gamma] \}_k \in \Gamma_k \pi/ \Gamma_{k+1}\pi \cong  \frakL_k$ 
denotes the class of $[\gamma]$ modulo $ \Gamma_{k+1}\pi$.}\\

\noindent
\textbf{Theorem C.} 
{\it  Let $k\geq 2$ be an integer 
and let $\gamma_+,\gamma_- \subset \Sigma$  be  closed curves of nilpotency class ${\geq k}$.
Assume that $\delta : = [\gamma_+] [{\gamma_-}]^{-1}  \in \Gamma_{k+1}\pi$.
Let $L_{\pm} \subset U$ be a knot resolution of $\gamma_{\pm}$ with  framing number $\pm 1$.
Then, we have  $U_{L_-} U_{L_+} \in \calC[2k-1]$,  $(t_{\gamma_-})^{-1} t_{\gamma_+} \in \calW[2k-1]$ and
\begin{equation} \label{eq:odd}
\tau_{2k-1}(U_{L_-} U_{L_+}) = \tau_{2k-1}\big((t_{\gamma_-})^{-1} t_{\gamma_+}\big)
= \eta_{ 2k-1} \big( \{ [\gamma_+] \}_k \hbox{\textbf{\,-\! -\! -}\,} \{ \delta \}_{k+1}\big)
\end{equation}
where $\{ [\gamma_+] \}_k \in \Gamma_k \pi/ \Gamma_{k+1}\pi \cong  \frakL_k$ denotes the class of $[\gamma_+]$ modulo $ \Gamma_{k+1}\pi$
and  $\{ \delta \}_{k+1}$ has a similar meaning.}\\

Theorems B \& C can be regarded as far-reaching generalizations of two well-known results:
\eqref{eq:odd} generalize the computation of $\tau_1$ by Johnson \cite{Joh80} on so-called ``BP maps'' 
which generate the Torelli group $\calI=\calM[1]$, and \eqref{eq:even} generalize
the computation of $\tau_2$ by Morita \cite{Mor89} on so-called ``BSCC maps'' 
which generate the Johnson subgroup $\calM[2]$.

The kind of ``nilpotent'' surgeries that are involved in Theorems B \&~C, 
firstly appeared in the work of Cochran, Gerges and Orr \cite{CGO}.
They are much more general than the $3$-dimensional constructions 
that have been considered so far for realizing values of the Milnor invariants/Johnson homomorphisms,
and which have been evoked above.  
In fact, our topological arguments  to prove Theorems A, B and  C will be self-contained
and they will not involve $3$-dimensional techniques 
such as Bing doubling,  clasper surgery or theory of gropes.
Nonetheless, the reader familiar with these
``geometric'' versions of  commutator calculus
may observe similarities between these techniques  and the $3$-dimensional arguments that we have developed.
In the instances where those ``nilpotent'' surgeries arise from clasper surgeries,
the conclusions of Theorems B \&~C for homology cylinders could also be proved by clasper  techniques:
see Remark \ref{rem:special_cases_B} and Remark \ref{rem:special_cases_C}.

We  can now  conclude to the  surjectivity of the Johnson homomorphisms  for homology cylinders as follows.
On the one hand, since $\frakH_{2k-2}$ is generated by  the elements of the form~\eqref{eq:gen_even}, 
Theorem B implies that $\tau_{2k-2}$ is surjective. 
On the other hand, since $\frakH_{2k-1}$ is generated by  the elements of the form~\eqref{eq:gen_odd}, 
 Theorem C gives the surjectivity of $\tau_{2k-1}$.

Finally, Theorems B \& C also have  implications for generalized Dehn twists.
Indeed, we deduce that $\tau_{2k-2}(t_{\gamma})$ is  integral for any closed curve $\gamma$ of nilpotency class $\geq k$,
(i.e$.$ it belongs to $\frakH_{2k-2} \subset \frakH_{2k-2}^\Q$),
and we also obtain that $\tau_{2k-1}\big((t_{\gamma_-})^{-1} t_{\gamma_+}\big)$ is integral 
for any closed curves $\gamma_\pm$ of nilpotency class $\geq k$
whose ``ratio'' is in $\Gamma_{k+1} \pi$.

\subsection{Organization of the paper}
 
 In Section \ref{sec:gdt}, we review the definition of a generalized Dehn twist $t_\gamma$ along a closed curve $\gamma \subset \Sigma$;
 assuming that $\gamma$ is of nilpotency class $\geq k$, we prove Theorem \ref{thm:gdtaction} 
 which computes $t_\gamma \in \Aut(\hpi/\hpi_{2k+1})$
 explicitly in terms of $\{ [\gamma] \}_{k+1} \in \pi/ \Gamma_{k+2}\pi$.

The next two sections are devoted to the study of homology cylinders.
The key feature of our study is the use of the $k$th Morita homomorphism $M_k\colon \calC[k] \to H_3(\pi/\Gamma_{k+1}\pi;\Z)$,
which is equivalent to the monoid homomorphism $\rho_{2k}\colon  \calC[k] \to \Aut(\pi/\Gamma_{2k+1}\pi)$. 
In Section \ref{sec:hc}, after having recalled all the necessary definitions,
we prove that the variation of $\rho_{2k}$ under surgery along a $(\pm 1)$-framed knot of nilpotency class $\geq k$ 
only depends on the homotopy class of that knot.
Next, in Section \ref{sec:main_theorem}, 
we prove Theorem \ref{thm:HCmonodromy} which, for a $(\pm 1)$-framed knot $L\subset U$ of nilpotency class $\geq k$,
 computes $\rho_{2k}(U_L)$ explicitly in terms of $\{ [L] \}_{k+1} \in \pi/ \Gamma_{k+2}\pi$. This will conclude the proof of Theorem A.

In Section  \ref{sec:diag_descriptions}, 
we reformulate Theorem A in a diagrammatic way using a ``symplectic'' expansion of the free group $\pi$.
In Section \ref{sec:applications}, first we
review the definition of the Johnson homomorphisms.
Then, using the diagrammatic version of Theorem A, we prove Theorem~B and Theorem~C 
which we next generalize to Theorem \ref{thm:generalized_B} and Theorem \ref{thm:generalized_C}, respectively.
These generalizations, where $U$ is replaced by any homology cylinder of appropriate depth in the Johnson filtration,
is the only place where we need a little bit of  clasper techniques.

\subsection{Remark}
\label{subsec:remark}
Some of the above results (including Theorem A,  and its  two companion formulas: Theorems \ref{thm:gdtaction} \& \ref{thm:HCmonodromy})
hold as well for any compact oriented surface $\Sigma$ with arbitrary  non-empty boundary.
But, the other results (including Theorems B \& C) have to be adapted in the multi-boundary setting:
 they have to be  stated with appropriate analogues of Johnson homomorphisms,
and to be proved with suitable analogues of ``symplectic'' expansions of the free group $\pi$.
In the particular case of a disk $\Sigma$ with punctures, 
Johnson homomorphisms turn into Milnor invariants of string-links in homology balls,
while ``symplectic'' expansions have to be replaced by the ``special'' expansions appearing in \cite{Ma_Formal,AKKN}.

\subsection{Conventions}
\label{subsec:convention}

Given a group $G$, we write  $\{x\}_k$ for any integer $k\geq 0$ and any $x\in G$
to denote the class  of $x$ modulo $\Gamma_{k+1} G$.
For any $x,y\in G$, we denote
\[
[x,y]:=xyx^{-1}y^{-1}, \quad x^y := yxy^{-1}, \quad \text{and} \quad \bar{x} := x^{-1}.
\]
Unless explicitly stated otherwise, group (co)homology is taken
with ordinary coefficients in $\Z$.

Given a $\Q$-vector space $V$ with filtration $V=V_0 \supset V_1 \supset V_2 \supset \dots$,
we write $u \equiv_j v$  for any integer $j\ge 0$ and $u,v\in V$ if $u-v \in V_j$.

\section{Generalized Dehn twists}   \label{sec:gdt}

We start with recalling intersection operations for loops in $\Sigma$, 
which we need to define and study generalized Dehn twists.
On this purpose, we fix the following notations.

\subsection{Notations}

The inverse path of an oriented path $\alpha$ in the surface $\Sigma$ is denoted by $\bar{\alpha}$.
For an immersed oriented curve $\alpha$ in $\Sigma$ and two simple points $p,q$ on it, we denote by $\alpha_{pq}$ the path in $\Sigma$ which is obtained by traversing $\alpha$ from $p$ to $q$.
(When $\alpha$ is not a closed curve, we implicitly assume that $p$ appears before $q$ in a parametrization of $\alpha$.)
Similarly, for an immersed oriented closed curve $\alpha$ in $\Sigma$ and a simple point $p$ on it, we denote by $\alpha_p$ the loop $\alpha$ based at $p$.
The product of paths always reads from left to right.
That is, if $\alpha$ and $\beta$ are paths in $\Sigma$ such that the terminal point of $\alpha$ is the same as the initial point of $\beta$, then $\alpha\beta$ denotes the path which traverses first $\alpha$ and then $\beta$.

Let $\alpha$ and $\beta$ be two immersed oriented curves in $\Sigma$ 
which intersect transversely at a double point $p\in \operatorname{int}(\Sigma)$.
We define the number $\varepsilon(p;\alpha,\beta) \in \{ \pm 1\}$ by setting $\varepsilon(p;\alpha,\beta):=+1$ if the ordered pair of tangent vectors of $\alpha$ and $\beta$ at $p$ gives a positive frame for the tangent space $T_p\Sigma$, and $\varepsilon(p;\alpha,\beta):=-1$ otherwise.

 The homotopy class of a based loop $\alpha \subset \Sigma$ is denoted by $[\alpha]$.
However, when there is no fear of confusion, we drop the brackets and loosely denote it by $\alpha$.

\subsection{Intersection operations for loops in a surface}

\label{subsec:intop}

Let $A:=\Q [\pi]$ be the group algebra of $\pi=\pi_1(\Sigma,\star)$ where $\star \in \partial \Sigma$,
and let $\mathfrak{g}=\mathfrak{g}(\Sigma)$ be the $\Q$-vector space spanned by the homotopy classes of free loops in $\Sigma$.
Recall from \cite[\S 3.2]{KK14} the map
\[
\sigma\colon \mathfrak{g} \otimes A \longrightarrow A
\]
defined in the following way.
Let $\alpha$ be a free loop and $\beta$ a based loop, and assume that their intersections consist of
finitely many transverse double points.
Then,
\[
\sigma(\alpha \otimes \beta) = \sigma(\alpha) \beta :=
\sum_{p\in \alpha \cap \beta} \varepsilon(p; \alpha,\beta)\, \beta_{\star p} \alpha_p \beta_{p\star}.
\]

The space $\mathfrak{g}$ has the structure of a Lie algebra given by Goldman \cite{Go86}, and the map $\sigma$ makes $A$ into a $\mathfrak{g}$-module. Moreover, for any $u\in \mathfrak{g}$, $\sigma(u)$ is a derivation of $A$ and annihilates the boundary-parallel element $\zeta := [\partial \Sigma] \in \pi$.
In summary, we obtain the Lie algebra homomorphism
\begin{equation}
\label{eq:siglie}
\sigma\colon \mathfrak{g} \longrightarrow {\rm Der}_{\zeta}(A), \quad u \longmapsto \sigma(u),
\end{equation}
where the target is the Lie algebra of derivations of $A$ which annihilates $\zeta$.

Let $I$ be the augmentation ideal of $A$.
That is, $I$ is the kernel of the algebra homomorphism $A \to \Q$ that maps any $x\in \pi$ to $1$.
The powers $I^m$, $m\ge 0$, define a multiplicative filtration of $A$.
Let $|\cdot |\colon A \to \mathfrak{g}$ be the projection map 
which is obtained by forgetting the base point of loops representing elements of $\pi$.
The map $|\cdot |$ induces a filtration of the space $\mathfrak{g}$ whose $m$th term is given by $\mathfrak{g}_m := |I^m|$.

As was shown in \cite{KK15, KK16}, the operation $\sigma$ is of degree $(-2)$ 
with respect to the filtrations of $A$ and $\mathfrak{g}$ described above:
\begin{equation}
\label{eq:sigfilt}
\sigma (\mathfrak{g}_m \otimes I^n ) \subset I^{m+n-2},
\quad \text{for any $m,n\ge 0$.}
\end{equation}
(Here we understand that $I^0 = I^{-1} = I^{-2} =A$.)
Let $\hat A$ be the completion of the algebra $A$ with respect to the filtration $\{ I^m \}_m$, and define $\hat{\mathfrak{g}}$ similarly.
The projection $|\cdot |$ naturally induces the map $\hat A \to \hat{\mathfrak{g}}$ 
which we denote in the same way.
Both the spaces $\hat A$ and $\hat{\mathfrak{g}}$ have natural filtrations 
whose $m$th terms are denoted by $\hat{I}^m$ and $\hat{\mathfrak{g}}_m$, respectively.
By property \eqref{eq:sigfilt}, the operation $\sigma$ extends to the completions $\hat A$ and $\hat{\mathfrak{g}}$.
In particular, the homomorphism \eqref{eq:siglie} naturally induces the Lie algebra homomorphism
\begin{equation}
\label{eq:sigliecomp}
\sigma\colon \hat{\mathfrak{g}} \longrightarrow {\rm Der}_{\zeta}(\hat{A})
\end{equation}
(for simplicity we use the same letter $\sigma$).
Here, the target is the space of derivations of $\hat A$ that  annihilates $\zeta$ 
and are continuous with respect to the filtration $\{ \hat{I}^m \}_m$.

In later sections we will use another intersection operation
\[
\kappa\colon A\otimes A \longrightarrow A\otimes A
\]
which was introduced in \cite{MT14,KK15},
and is equivalent to the ``homotopy'' intersection pairing introduced in \cite{Tu78}.
To define this operation, take a second base point $\bullet  \in \partial \Sigma$ different from $\star$
and let $\nu \subset \partial \Sigma$ be the {orientation-preserving} arc connecting $\bullet$ to $\star$.
Let $\alpha$ and $\beta$ be elements in $\pi$.
Using the isomorphism $\pi_1(\Sigma,\star) \cong \pi_1(\Sigma,\bullet)$ given by the path $\nu$, we represent $\alpha$ as a loop in $\Sigma$ based at $\bullet$ and $\beta$ as a loop in $\Sigma$ based at $\star$, for which we use the same letters $\alpha$ and~$\beta$.
Assume also that their intersections consist of finitely many transverse double points.
Then, we set
\begin{equation} \label{eq:kappa}
\kappa(\alpha,\beta) := \sum_{p\in \alpha \cap \beta}
\varepsilon(p;\alpha,\beta)\, \beta_{\star p} \alpha_{p\bullet} \nu \otimes \bar{\nu} \alpha_{\bullet p} \beta_{p \star}.
\end{equation}
This map $\kappa$ is a refinement of $\sigma$:
for all $u,v \in A$ we have
\begin{equation}
\label{eq:kappasigma}
\sigma( |u| )(v) = \kappa'(u,v)\, \kappa''(u,v),
\end{equation}
where we use Sweedler's notation $\kappa(u,v) = \kappa'(u,v)\otimes \kappa''(u,v)$.

A slight variation of $\kappa$ is shown in \cite{MT14} to be a quasi-Poisson double bracket in the sense of van den Bergh \cite{vdB08},
which induces  on the representation spaces of $\pi$ the quasi-Poisson structures defined in \cite{AKSM}.
In the sequel, we will only need the following property of $\kappa$: for any $u,v\in A$, {we have}
\begin{equation}
\label{eq:kappa1}
\kappa(uv,w) = (1\otimes u) \kappa(v,w) + \kappa(u,w) (v\otimes 1).
\end{equation}
(Here, the product is taken in the tensor product of two copies of the algebra~$A$.)
The operation $\kappa$ is a map of degree $(-2)$ with respect to the natural filtration 
on the tensor product $A\otimes A$, as was shown in \cite{KK15, KK16}.

For our purpose, it is convenient to use another variant of $\kappa$.
For $a=a'\otimes a'', b=b'\otimes b'' \in A\otimes A$ and $c\in A$, we denote
\[
a\diamond b := a'b' \otimes b''a'' \in A\otimes A,
\quad \text{and} \quad
a\diamond c:= a'ca'' \in A.
\]
The operations $\diamond$ are associative in the following sense:
if $a,b\in A\otimes A$, then
$(a\diamond b) \diamond c = a \diamond (b\diamond c)$ for any $c\in A$ or any $c\in A\otimes A$.
\begin{dfn}
For $\alpha,\beta \in \pi$, set
\begin{equation}
\label{eq:Phi}
\widetilde{\kappa}(\alpha,\beta) := (1\otimes \beta^{-1}) \diamond \kappa(\alpha,\beta) \diamond (\alpha^{-1} \otimes 1).
\end{equation}
\end{dfn}
\noindent
More explicitly, if the loops $\alpha$ and $\beta$ are arranged as in \eqref{eq:kappa}, we get
\begin{equation}
\label{eq:Phiexplicit}
\widetilde{\kappa}(\alpha,\beta) = \sum_{p\in \alpha\cap \beta} \varepsilon(p;\alpha,\beta)\, \beta_{\star p} \overline{\alpha_{\bullet p}} \nu \otimes \bar{\nu} \alpha_{\bullet p} \overline{\beta_{\star p}}.
\end{equation}

\begin{lem} \label{lem:Phiproperties}
We have the following  for any $\alpha,\alpha_1,\dots,\alpha_n,\beta,x\in \pi$:
\begin{eqnarray*}
(1)&\qquad  \widetilde{\kappa}(\alpha \beta,x)
 &= \ \widetilde{\kappa}(\alpha,x) + \widetilde{\kappa}(\beta,x) \diamond (\bar{\alpha} \otimes \alpha);\\
 (2)&\qquad \widetilde{\kappa}(\alpha_1\cdots \alpha_n,x) &= \
\sum_{i=1}^n \widetilde{\kappa}(\alpha_i,x) \diamond
(\overline{\alpha_1\cdots \alpha_{i-1}} \otimes \alpha_1\cdots \alpha_{i-1});\\
(3)&\qquad \widetilde{\kappa}({\bar \alpha},x) &= \ - \widetilde{\kappa}(\alpha,x) \diamond (\alpha \otimes \bar{\alpha});\\
(4)&\qquad \widetilde{\kappa}([\alpha,\beta],x) &= \ \widetilde{\kappa}(\alpha,x) \diamond (1\otimes 1 - {\bar{\beta}}^{\alpha} \otimes \beta^{\alpha}) \\
&& \quad + \ \widetilde{\kappa}(\beta,x) \diamond ( 1\otimes 1-\alpha^{\beta} \otimes \bar{\alpha}^{\beta}) \diamond (\bar{\alpha} \otimes \alpha).
\end{eqnarray*}
\end{lem}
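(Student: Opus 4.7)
The plan is to expand each assertion using only the Leibniz-type rule \eqref{eq:kappa1} for $\kappa$, together with the definition \eqref{eq:Phi} of $\widetilde{\kappa}$ and the associativity of $\diamond$. A useful preliminary observation that I would record first is that, for any $a\in A\otimes A$ and $u\in A$, one has $(1\otimes u)\diamond a = a\cdot(1\otimes u)$ and $(u\otimes 1)\diamond a = (u\otimes 1)\cdot a$ (ordinary product in $A\otimes A$); this is what makes it possible to shuffle the outer ``dressings'' $(1\otimes \beta^{-1})$ and $(\alpha^{-1}\otimes 1)$ coming from \eqref{eq:Phi} past the Leibniz-type decomposition of $\kappa$.

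For (1), I would start from $\kappa(\alpha\beta,x)=(1\otimes\alpha)\kappa(\beta,x)+\kappa(\alpha,x)(\beta\otimes 1)$ given by \eqref{eq:kappa1}, then dress it with $(1\otimes x^{-1})$ on the left and $((\alpha\beta)^{-1}\otimes 1)$ on the right via $\diamond$. The first summand rearranges (using the observation above) into $\widetilde{\kappa}(\beta,x)\diamond(\bar\alpha\otimes\alpha)$ after absorbing the extra $(1\otimes\alpha)$ and splitting $(\bar\beta\bar\alpha\otimes 1)=(\bar\beta\otimes 1)\diamond(\bar\alpha\otimes 1)$; the second summand collapses directly to $\widetilde{\kappa}(\alpha,x)$ because $(\beta\otimes 1)\diamond(\bar\beta\bar\alpha\otimes 1)=(\bar\alpha\otimes 1)$. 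Then (2) follows from (1) by a straightforward induction on $n$, the key algebraic fact being the chaining identity $(\bar\alpha\otimes\alpha)\diamond(\overline{\alpha_2\cdots\alpha_{i-1}}\otimes\alpha_2\cdots\alpha_{i-1})=(\overline{\alpha\alpha_2\cdots\alpha_{i-1}}\otimes\alpha\alpha_2\cdots\alpha_{i-1})$.

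For (3), apply (1) to the decomposition $1=\alpha\bar\alpha$, noting that $\widetilde{\kappa}(1,x)=0$ (which reduces to $\kappa(1,x)=0$, itself an immediate consequence of \eqref{eq:kappa1} with $u=v=1$); solving gives $\widetilde{\kappa}(\bar\alpha,x)\diamond(\bar\alpha\otimes\alpha)=-\widetilde{\kappa}(\alpha,x)$, and one finishes by post-composing with $(\alpha\otimes\bar\alpha)$, since $(\bar\alpha\otimes\alpha)\diamond(\alpha\otimes\bar\alpha)=(1\otimes 1)$ in the sense of $\diamond$-action. For (4), apply (2) to the four-letter word $[\alpha,\beta]=\alpha\beta\bar\alpha\bar\beta$, so that one gets four terms indexed by the positions of $\alpha,\beta,\bar\alpha,\bar\beta$. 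Using (3) to rewrite the $\bar\alpha$- and $\bar\beta$-terms in terms of $\widetilde\kappa(\alpha,x)$ and $\widetilde\kappa(\beta,x)$, one then simplifies the $\diamond$-products with the constant elements. The $\alpha$-contributions combine into $\widetilde{\kappa}(\alpha,x)\diamond\bigl((1\otimes 1)-(\alpha\bar\beta\bar\alpha\otimes\alpha\beta\bar\alpha)\bigr)$, which is exactly $\widetilde{\kappa}(\alpha,x)\diamond\bigl(1\otimes 1-\bar\beta^{\alpha}\otimes\beta^{\alpha}\bigr)$; similarly, the $\beta$-contributions factor as $\widetilde{\kappa}(\beta,x)\diamond\bigl(1\otimes 1-\alpha^{\beta}\otimes\bar\alpha^{\beta}\bigr)\diamond(\bar\alpha\otimes\alpha)$ after recognizing $\beta\alpha\bar\beta\bar\alpha\otimes\alpha\beta\bar\alpha\bar\beta=(\alpha^{\beta}\otimes\bar\alpha^{\beta})\diamond(\bar\alpha\otimes\alpha)$.

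The main obstacle is purely bookkeeping: keeping careful track of the interplay between the ordinary product in $A\otimes A$ (used to state \eqref{eq:kappa1}) and the $\diamond$-operation (used to state \eqref{eq:Phi} and the target formulas), and making sure the associativity of $\diamond$ is applied correctly when regrouping conjugation words such as $\alpha^{\beta}$ and $\bar\beta^{\alpha}$. Once the preliminary compatibility observation above is in hand, no further ideas are needed, so the entire lemma reduces to a careful but mechanical manipulation.
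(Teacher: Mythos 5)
Your proposal is correct and follows essentially the same route as the paper: rewrite the Leibniz rule \eqref{eq:kappa1} in terms of $\diamond$, obtain (1) by dressing with $(1\otimes \bar x)$ and $(\overline{\alpha\beta}\otimes 1)$, get (2) by induction, deduce (3) from $\widetilde{\kappa}(1,x)=0$, and prove (4) by expanding the four-letter word $\alpha\beta\bar\alpha\bar\beta$ via (2) and (3). One small bookkeeping slip: the chaining identity you invoke for (2) should have its two $\diamond$-factors in the opposite order, namely $(\overline{\alpha_2\cdots\alpha_{i-1}}\otimes\alpha_2\cdots\alpha_{i-1})\diamond(\bar\alpha\otimes\alpha)=(\overline{\alpha\alpha_2\cdots\alpha_{i-1}}\otimes\alpha\alpha_2\cdots\alpha_{i-1})$, since $\diamond$ interleaves left and right multiplications and the version you wrote yields $\overline{\alpha_2\cdots\alpha_{i-1}\alpha}\otimes\alpha_2\cdots\alpha_{i-1}\alpha$ instead; the correctly ordered factor $(\bar\alpha_1\otimes\alpha_1)$ is in fact what the inductive step produces on the right.
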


\begin{proof}
Using the notation $\diamond$, the formula \eqref{eq:kappa1} can be rewritten as
\begin{equation}
\label{eq:kappa2}
\kappa(uv,w) = \kappa(u,w) \diamond (v \otimes 1) + \kappa(v,w) \diamond (1\otimes u).
\end{equation}
Hence
\begin{align*}
\widetilde{\kappa}(\alpha\beta,x) &= (1\otimes {\bar x}) \diamond \kappa(\alpha\beta,x) \diamond (\overline{\alpha\beta} \otimes 1) \\
&= (1\otimes {\bar x}) \diamond \kappa(\alpha,x) \diamond (\beta \otimes 1) \diamond (\overline{\alpha\beta} \otimes 1) \\
& \quad + (1\otimes {\bar x}) \diamond \kappa(\beta,x) \diamond (1\otimes \alpha) \diamond (\overline{\alpha\beta} \otimes 1) \\
&= \widetilde{\kappa}(\alpha,x) + \widetilde{\kappa}(\beta,x) \diamond (\bar{\alpha} \otimes \alpha).
\end{align*}
This proves (1), and (2) follows by a straightforward induction on $n\geq 1$.

To prove (3), observe that $\widetilde{\kappa}(1,x)=0$ since $\kappa(1,x)=0$.
Hence (1) implies~that
\[
0= \widetilde{\kappa}({\bar \alpha}\alpha,x)
=\widetilde{\kappa}({\bar \alpha},x) + \widetilde{\kappa}(\alpha,x) \diamond (\alpha \otimes \bar{\alpha}).
\]
As for the identity (4), it follows from (2) and (3):
\begin{align*}
\widetilde{\kappa}([\alpha,\beta],x) &=
\widetilde{\kappa}(\alpha,x) + \widetilde{\kappa}(\beta,x) \diamond (\bar{\alpha}\otimes \alpha) \\
& \quad +\widetilde{\kappa}({\bar \alpha},x) \diamond (\overline{\alpha\beta}\otimes \alpha\beta) + \widetilde{\kappa}({\bar \beta},x) \diamond ( \bar{\beta}^{\alpha} \otimes \beta^{\alpha}) \\
&= \widetilde{\kappa}(\alpha,x) + \widetilde{\kappa}(\beta,x) \diamond (\bar{\alpha} \otimes \alpha) \\
& \quad -\widetilde{\kappa}(\alpha,x) \diamond (\alpha \otimes \bar{\alpha}) \diamond (\overline{\alpha\beta} \otimes \alpha\beta) \\
& \quad  -\widetilde{\kappa}(\beta,x) \diamond (\beta \otimes \bar{\beta}) \diamond ( \bar{\beta}^{\alpha} \otimes \beta^{\alpha}).
\end{align*}

\up
\end{proof}

\subsection{Generalized Dehn twists}
\label{subsec:gdt}

Let $\gamma \subset \Sigma$ be a closed curve.
Choose an orientation of $\gamma$ and  an \emph{arc-basing at $\bullet$} of $\gamma$,
which means that $\gamma$ is made into a loop based at $\bullet$ by fixing an arc from $\bullet$ to a simple point of $\gamma$.
We denote the resulting element in $\pi_1(\Sigma,\bullet) \cong \pi_1(\Sigma,\star) = \pi$ by the same letter~$\gamma$.
Set
\[
L(\gamma) := \left| \frac{1}{2} (\log \gamma)^2 \right| \in \hat{\mathfrak{g}}_2
\]
where
$$
\log x := \sum_{m\ge 1} \frac{(-1)^{m-1}}{m}  (x-1)^m \in \hat{A} \quad \hbox{ for any } x\in 1 + \hat{I}.
$$
One can see that $L(\gamma)$ is independent of the choices made as above.

It turns out that the exponential of the derivation
\[
D_{\gamma}:=\sigma(L(\gamma)) \in {\rm Der}_{\zeta}(\hat A)
\]
converges and defines a Hopf algebra automorphism of $\hat A$ which preserves $\zeta$.
Therefore, by restriction to the group-like part of $\hat A$,
\[
\exp( D_{\gamma}):=\sum_{n\geq 0} \frac{1}{n!} {D_{\gamma}}^n
\]
defines an automorphism of the Malcev completion $\hpi$.

\begin{dfn}
The \emph{generalized Dehn twist along $\gamma$} is the element
\[
t_{\gamma}:= \exp(D_{\gamma}) \in {\rm Aut}_{\zeta}(\hpi).
\]
\end{dfn}

If $\gamma$ is simple, i.e., if $\gamma$ has no self-intersection, then the generalized Dehn twist $t_{\gamma}$ 
coincides with the image of the ordinary (right-handed) Dehn twist along $\gamma$ by the map $j\circ \rho\colon \calM \to {\rm Aut}(\widehat{\pi})$  that has been considered in Section~\ref{subsec:DN}.
Our terminology comes from this fact.
See \cite{KK14} for further explanations, and see \cite{MT13} for group-theoretical generalizations of that.

We define $\calW$ to be 
the subgroup of ${\rm Aut}_{\zeta}(\widehat{\pi})$ that is generated by generalized Dehn twists.
Since the mapping class group $\calM$ is generated by ordinary Dehn twists, one can regard $\calM$ as a subgroup of $\calW$ through $j\circ \rho$.

\subsection{Actions on quotients of the Malcev completion of $\pi$}
\label{subsec:trunc}

Hereafter let $k\ge 2$ and assume that $\gamma$ is of nilpotency class $\ge k$.
Our goal in this section is to describe the action of $t_{\gamma}$ on the quotient group $\widehat{\pi}/\widehat{\pi}_{2k+1}$.
Since $\gamma-1 \in I^{k}$, we have
\[
\frac{1}{2}(\log \gamma)^2 \equiv_{3k} \frac{1}{2}(\gamma-1)^2 \in \hat A.
\]
(Following the convention of Section \ref{subsec:convention},
the symbol $\equiv_j$ means an equivalence modulo $\hat{I}^j$.) Hence  we have 
\begin{equation}
\label{eq:Dgam}
D_{\gamma} = D'_{\gamma} + \sigma(u)
\quad \hbox{where} \
D'_{\gamma} : = \sigma \left( \frac{1}{2} \big|(\gamma-1)^2 \big| \right)
\end{equation}
and $u=u(\gamma)$ is some element in $\hat{\mathfrak{g}}_{3k}$.

\begin{prop}
\label{prop:tgamx}
 Let $\varepsilon \in \{ -1, +1\}$. 
For any $x\in \pi$, we have
\[
 (t_{\gamma})^{\varepsilon} (x)\, x^{-1} -1 \equiv_{3k-1} \varepsilon\,  \widetilde{\kappa}(\gamma,x) \diamond (\gamma - 1).
\]
\end{prop}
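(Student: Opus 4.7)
The plan is to reduce modulo $\hat{I}^{3k-1}$ using the decomposition \eqref{eq:Dgam} of $D_{\gamma}$, and then to bridge between the resulting leading term and $\widetilde{\kappa}(\gamma,x)\diamond(\gamma-1)$ via the elementary identity $\gamma^{-1}(\gamma-1)=1-\gamma^{-1}$.

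First, the splitting $D_{\gamma}=D'_{\gamma}+\sigma(u)$ from \eqref{eq:Dgam}, together with the degree $(-2)$ estimate \eqref{eq:sigfilt}, yields, for any $x\in\pi$, the bounds $D'_{\gamma}(x)\in\hat{I}^{2k-1}$, $\sigma(u)(x)\in\hat{I}^{3k-1}$, and $D_{\gamma}^{n}(x)\in\hat{I}^{4k-3}\subset\hat{I}^{3k-1}$ for all $n\geq 2$ (this last inclusion uses $k\geq 2$). Expanding $(t_{\gamma})^{\varepsilon}=\exp(\varepsilon D_{\gamma})$ and multiplying by $x^{-1}$, which preserves the filtration since $x^{-1}$ is a unit of $\hat{A}$, thus gives
\[
(t_{\gamma})^{\varepsilon}(x)\cdot x^{-1}-1\equiv_{3k-1}\varepsilon\, D'_{\gamma}(x)\cdot x^{-1}.
\]

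Next, I would expand $D'_{\gamma}(x)$ in terms of $\kappa(\gamma,x)$. Two applications of \eqref{eq:kappa1}, together with $\kappa(1,x)=0$, give
\[
\kappa\bigl((\gamma-1)^{2},x\bigr)=(1\otimes(\gamma-1))\,\kappa(\gamma,x)+\kappa(\gamma,x)\bigl((\gamma-1)\otimes 1\bigr),
\]
and applying the multiplication map as in \eqref{eq:kappasigma} yields $D'_{\gamma}(x)=\kappa'\,(\gamma-1)\,\kappa''$, where $\kappa(\gamma,x)=\kappa'\otimes\kappa''$ in Sweedler notation (the two summands contribute equally, cancelling the factor $\tfrac{1}{2}$). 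On the other hand, unpacking \eqref{eq:Phi} gives $\widetilde{\kappa}(\gamma,x)=\kappa'\gamma^{-1}\otimes\kappa''x^{-1}$, and hence $\widetilde{\kappa}(\gamma,x)\diamond(\gamma-1)=\kappa'(1-\gamma^{-1})\kappa''x^{-1}$. The discrepancy with $\varepsilon\, D'_{\gamma}(x)\cdot x^{-1}$ therefore equals $\varepsilon\,\kappa'\gamma^{-1}(\gamma-1)^{2}\kappa''x^{-1}$, which lies in $\hat{I}^{3k-1}$: the degree $(-2)$ property of $\kappa$ places $\kappa(\gamma,x)$ in $F_{k-1}(\hat{A}\otimes\hat{A})$, and $(\gamma-1)^{2}\in\hat{I}^{2k}$, so the filtration count closes. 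Combining with the first reduction finishes the proof.

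The only really delicate point is the bookkeeping with the two $\diamond$ operations and the bridge identity $\gamma^{-1}(\gamma-1)=1-\gamma^{-1}$, which is exactly what makes $\widetilde{\kappa}$ (rather than $\kappa$) the correct tensor to appear in the final formula.
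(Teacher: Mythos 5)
Your proof is correct and follows essentially the same route as the paper's: the same reduction of $\exp(\varepsilon D_\gamma)$ to $\varepsilon D'_\gamma$ modulo $\hat I^{3k-1}$, the same computation $D'_\gamma(x)=\kappa'(\gamma-1)\kappa''$ via \eqref{eq:kappa1} and \eqref{eq:kappasigma}, and the same error term $\kappa'\gamma^{-1}(\gamma-1)^2\kappa''x^{-1}\in\hat I^{3k-1}$ (the paper phrases this as $\widetilde\kappa(\gamma,x)\diamond\gamma(\gamma-1)\equiv_{3k-1}\widetilde\kappa(\gamma,x)\diamond(\gamma-1)$, which is the identical filtration count in $\diamond$-notation).
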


\begin{proof}
First we show that $(t_{\gamma})^{\varepsilon}(x)\, x^{-1} -1 \equiv_{3k-1} \varepsilon\,  D'_{\gamma}(x)x^{-1}$.
Since the derivation $D_{\gamma}$ is of degree $2k-2$,  we have
\[
{D_{\gamma}}^n(x) = {D_{\gamma}}^n(x-1)
\in \hat{I}^{n(2k-2)+1} \subset \hat{I}^{4k-3} \subset \hat{I}^{3k-1}
\]
for any $n\ge 2$. Therefore,
\[
t_{\gamma}(x) = x + D_{\gamma}(x) + \sum_{n\ge 2} \frac{1}{n!} {D_{\gamma}}^n(x)
\equiv_{3k-1} x + D_{\gamma}(x).
\]
By \eqref{eq:Dgam}, we have $D_{\gamma}(x) \equiv_{3k-1} D'_{\gamma}(x)$.
Hence $t_{\gamma}(x) \equiv_{3k-1} x + D'_{\gamma}(x)$.
 Similarly, for $(t_{\gamma})^{-1} = \exp(-D_{\gamma})$, we compute $(t_{\gamma})^{-1}(x) \equiv_{3k-1} x - D'_{\gamma}(x)$.
This proves the assertion. 

Next we prove that $D'_{\gamma}(x)x^{-1} \equiv_{3k-1} \widetilde{\kappa}(\gamma,x) \diamond (\gamma - 1)$, 
which will complete the proof of the proposition.
By \eqref{eq:kappa2}, we have
\begin{align*}
 \kappa \left( \frac{1}{2}(\gamma -1)^2,x \right) 
 = \frac{1}{2} \big( \kappa(\gamma,x) \diamond (1\otimes (\gamma -1)) + \kappa(\gamma,x) \diamond ((\gamma-1) \otimes 1) \big).
\end{align*}
Thus \eqref{eq:kappasigma} shows that
\[
D'_{\gamma}(x) = \sigma \left( \frac{1}{2} \big|(\gamma-1)^2\big| \right) (x)
= \kappa(\gamma,x) \diamond (\gamma -1),
\]
and we obtain
\[
D'_{\gamma}(x)x^{-1} =
(1\otimes x^{-1}) \diamond \kappa(\gamma,x) \diamond (\gamma-1)
=\widetilde{\kappa}(\gamma,x) \diamond \gamma (\gamma-1).
\]
The element $\kappa(\gamma,x)=\kappa(\gamma-1,x-1)$ is of degree $k-1$ in $A\otimes A$, so is $\widetilde{\kappa}(\gamma,x)$.
Moreover $\gamma-1\in I^k$.
Hence $\widetilde{\kappa}(\gamma,x)\diamond \gamma(\gamma-1) \equiv_{3k-1} \widetilde{\kappa}(\gamma,x)\diamond (\gamma-1)$, which proves the assertion.
\end{proof}

\begin{cor}
\label{cor:tgamx}
Let $\varepsilon \in \{ -1, +1\}$.
\begin{enumerate}
\item
For any $x\in \pi$, $(t_{\gamma})^\varepsilon (x)\, x^{-1} -1 \equiv_{2k+1}
\varepsilon\, \widetilde{\kappa}(\gamma,x) \diamond (\gamma - 1).$
\item
We have $(t_{\gamma})^\varepsilon \in \mathcal{W}[2k-2]$.
\item
The action of $(t_{\gamma})^\varepsilon$ 
on $\widehat{\pi}/\widehat{\pi}_{2k+1}$ depends only on the homotopy class of $\gamma$ modulo $\Gamma_{k+2} \pi$.
\end{enumerate}
\end{cor}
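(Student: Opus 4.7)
The plan is to derive all three parts as direct consequences of Proposition \ref{prop:tgamx}, together with the degree estimate \eqref{eq:sigfilt} for $\kappa$ (which transfers to $\widetilde\kappa$ because the correcting unit factors in \eqref{eq:Phi} have $I$-adic degree $0$) and, only for part (3), the product rule of Lemma \ref{lem:Phiproperties}(1). No new geometric input is needed; everything reduces to bookkeeping of $I$-adic degrees.

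First I would dispose of part (1) in one line: since $k\geq 2$ one has $3k-1\geq 2k+1$, so the congruence of Proposition \ref{prop:tgamx} remains valid modulo $\hat I^{2k+1}$. For part (2), I would show that the right-hand side of (1) already lies in $\hat I^{2k-1}$. Indeed, $\gamma \in \Gamma_k\pi$ gives $\gamma-1 \in I^k$, and $\kappa(\gamma,x)=\kappa(\gamma-1,x-1)$ sits in $(A\otimes A)_{k-1}$ by \eqref{eq:sigfilt}; the same is true of $\widetilde\kappa(\gamma,x)$. Diamond-multiplying by $\gamma-1\in I^k$ then lands in $I^{2k-1}$, so part (1) yields $(t_\gamma)^\varepsilon(x)\,x^{-1}\in 1+\hat I^{2k-1}$ for every $x\in\pi$. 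This is precisely the statement that $(t_\gamma)^\varepsilon$ acts trivially on $\hat\pi/\hat\pi_{2k-1}$, i.e.\ that $(t_\gamma)^\varepsilon \in \calW[2k-2]$.

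For part (3), I would use part (1) to reduce the claim to showing that $\widetilde\kappa(\gamma,x)\diamond(\gamma-1)$, viewed modulo $\hat I^{2k+1}$, depends only on $\{\gamma\}_{k+1}\in \pi/\Gamma_{k+2}\pi$. Writing $\gamma'=\gamma c$ with $c\in \Gamma_{k+2}\pi$, Lemma \ref{lem:Phiproperties}(1) expands
\[
\widetilde\kappa(\gamma',x) = \widetilde\kappa(\gamma,x) + \widetilde\kappa(c,x)\diamond(\bar\gamma\otimes\gamma),
\]
and one also has $\gamma'-1=\gamma(c-1)+(\gamma-1)$. Among the four cross-terms this produces, those involving $\gamma(c-1)\in I^{k+2}$ are in $I^{(k-1)+(k+2)}=I^{2k+1}$ using $\widetilde\kappa(\gamma,x)\in (A\otimes A)_{k-1}$, and those involving $\widetilde\kappa(c,x)\in (A\otimes A)_{k+1}$ (again by \eqref{eq:sigfilt}, since $c-1\in I^{k+2}$) combined with $\gamma-1\in I^k$ are in $I^{(k+1)+k}=I^{2k+1}$. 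Only the main term $\widetilde\kappa(\gamma,x)\diamond(\gamma-1)$ survives modulo $\hat I^{2k+1}$, giving part (3). The main (and only) obstacle is this final degree accounting: one must make sure each of the four error terms uses the hypothesis $c\in\Gamma_{k+2}\pi$ — either directly in the expansion of $\gamma'-1$, or indirectly through the gain of degree in $\widetilde\kappa(c,\cdot)$.
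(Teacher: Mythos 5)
Your proposal is correct and follows essentially the same route as the paper: part (1) by the inequality $2k+1\le 3k-1$, part (2) by the degree count $\widetilde{\kappa}(\gamma,x)\diamond(\gamma-1)=\widetilde{\kappa}(\gamma-1,x-1)\diamond(\gamma-1)\in I^{2k-1}$, and part (3) by expanding $\widetilde{\kappa}(\gamma c,x)\diamond(\gamma c-1)$ via Lemma \ref{lem:Phiproperties}\!~(1) and discarding the cross-terms of degree $\ge 2k+1$. The only difference is cosmetic: the paper groups the expansion into two terms $\widetilde{\kappa}(\gamma,x)\diamond(\gamma\delta-1)+\widetilde{\kappa}(\delta,x)\diamond(\delta\gamma-1)$ before estimating, while you keep the four summands separate, with identical degree bookkeeping.
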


\begin{proof}
(1) immediately follows from Proposition \ref{prop:tgamx} since ${2k+1} \le 3k-1$.
Because $\widetilde{\kappa}(\gamma,x) \diamond (\gamma-1) = \widetilde{\kappa}(\gamma-1,x-1) \diamond (\gamma-1) \in I^{2k-1}$, 
we have $t_{\gamma} (x) = x$ modulo $\widehat{\pi}_{2k-1}$ which proves (2).

We now prove (3). Let $\gamma_1 \in \Gamma_k \pi$ such that 
$\delta:= \gamma^{-1} \gamma_1$ belongs to $\Gamma_{k+2}\pi$.
Since $\gamma_1 =\gamma \delta$, by Lemma \ref{lem:Phiproperties}\! (1) we compute
\begin{align*}
\widetilde{\kappa}(\gamma_1,x) \diamond (\gamma_1 -1 )
&= \big( \widetilde{\kappa}(\gamma,x) + \widetilde{\kappa}(\delta,x) \diamond (\bar{\gamma} \otimes \gamma) \big) \diamond (\gamma \delta-1) \\
&= \widetilde{\kappa}(\gamma,x) \diamond (\gamma \delta -1 ) + \widetilde{\kappa}(\delta,x) \diamond (\delta \gamma-1).
\end{align*}
The second term lies in $I^{2k+1}$ 
since $\widetilde{\kappa}(\delta,x) = \widetilde{\kappa}(\delta-1,x-1)$ is of degree $k+1$ and $(\delta \gamma -1) \in I^k$.
By a similar argument, the first term is equal to
\[
 \widetilde{\kappa}(\gamma,x) \diamond \big(  (\gamma-1)+\gamma (\delta-1)  \big)  \equiv_{2k+1}
\widetilde{\kappa}(\gamma,x) \diamond (\gamma-1).
\]
Hence
\[
\widetilde{\kappa}(\gamma_1,x) \diamond (\gamma_1-1)
\equiv_{2k+1}
\widetilde{\kappa}(\gamma,x) \diamond (\gamma-1).
\]
Now the assertion follows from (1).
\end{proof}

\subsection{A construction on group commutators}
\label{subsec:algorithm}

The next step to compute the action of $t_{\gamma}$ on $\widehat{\pi}/\widehat{\pi}_{2k+1}$ 
is to expand the right-hand side of the identity in Corollary \ref{cor:tgamx}\! (1)
by expressing $\gamma$ as a product of commutators of order $k$.
For this purpose we use the following construction.
The same construction will appear in Section \ref{sec:main_theorem} where we consider homology cylinders.
This fact is a key point to our proof of Theorem A.

Let $\mathcal{T}$ be a planar binary rooted tree.
Each edge $e$ of $\mathcal{T}$ is oriented so that it points away from the root: 
the initial and terminal vertices of $e$ are denoted by $i(e)$ and $t(e)$, respectively. 
The \emph{root edge} of $\mathcal{T}$ is the unique edge whose initial vertex is the root of $\mathcal{T}$.
An edge $e$ of $\mathcal{T}$ is called a \emph{leaf edge} if $t(e)$ is a leaf of $\mathcal{T}$.

Now suppose that $\mathcal{T}$ has $m$ leaves, and the leaves 
are colored by elements $g_1,\ldots,g_m$ of a group $G$.
To simplify notation, we denote $\vec{g}=(g_1,\ldots,g_m)$.
By reading each trivalent vertex as a group commutator, 
we can define an element $\mathcal{T}(\vec{g}) = \mathcal{T}(g_1,\ldots,g_m) \in \Gamma_m G$.
For instance, if
\begin{equation}
\label{eq:Texample}
\mathcal{T}=
\begin{array}{c}
\labellist \small \hair 2pt
\pinlabel {root} [t] at 108 5
\endlabellist
\includegraphics[scale=0.3]{rooted_tree} 
\end{array}
\end{equation}
then $\mathcal{T}(g_1,g_2,g_3,g_4) = [g_1,[[g_2,g_3],g_4]]$.

Let $e$ be an edge of $\mathcal{T}$. 
If $e$ is a leaf edge, let $\xi(e)\in G$ be the color of the leaf $t(e)$. Otherwise cut $\mathcal{T}$ at $i(e)$: then the connected component containing $e$ can be considered as a planar binary tree rooted at $i(e)$ and its leaves inherit a $G$-coloring from that of $\mathcal{T}$;
therefore it defines a group commutator $\xi(e) \in \Gamma_2 G$ by using the procedure in the preceding paragraph.
In this way any edge $e$ of $\mathcal{T}$ carries an element $\xi(e)$ of $G$.

\begin{dfn}
\label{dfn:xi_j}
Let $\mathcal{T}$ be a planar binary rooted tree with $m$ leaves colored by $\vec{g} \in G^m$ and let $h\in G$.
For each edge $e$ of $\mathcal{T}$, we define the element
\[
\Phi_e(\mathcal{T},\vec{g},h) \in G
\]
by the following inductive procedure:
\begin{itemize}
\item If $e$ is the root edge of $\mathcal{T}$, then define $\Phi_e(\mathcal{T},\vec{g},h):=h$.
\item Let $e$ be a non-root edge of $\mathcal{T}$.
Set $v:=i(e)$ and let $e_0$ be the edge of $\mathcal{T}$ with $t(e_0)=v$.
There are two cases: when one goes through the edge $e_0$ from $i(e_0)$, 
the edge $e$ appears either on the left (Case~1) or on the right (Case~2).
In Case 1, let $e_l:=e$ 
and let $e_r$ be the edge such that $e_r \neq e_l$ and $i(e_r) = v$.
In Case 2, let $e_r:= e$ 
and let $e_l$ be the edge such that $e_l \neq e_r$ and $i(e_l) = v$.
In both cases, let $t_l := \xi(e_l)$ and $t_r := \xi(e_r)$.
Now define
\[
\Phi_e(\mathcal{T},\vec{g},h) :=
\begin{cases}
\big[\Phi_{e_0}(\mathcal{T},\vec{g},h),{\overline{t_r}}^{t_l}\big] & \text{in Case 1,} \\
\big[\Phi_{e_0}(\mathcal{T},\vec{g},h)^{\overline{t_l}},{t_l}^{t_r}\big] & \text{in Case 2.}
\end{cases}
\]
\end{itemize}
For $1\le j\le m$, we denote $\Phi_j(\mathcal{T},\vec{g},h) = \Phi_{e_j}(\mathcal{T},\vec{g},h)$, where $e_j$ is the leaf edge of $\mathcal{T}$ such that $t(e_j)$ is the $j$th leaf.
\end{dfn}

Note that if $h\in \Gamma_a G$ for some integer $a$, 
then $\Phi_j (\mathcal{T},\vec{g},h) \in \Gamma_{a+m-1}G$
for any $j \in \{1,\dots,m\}$.

\begin{example}
If $\mathcal{T}$ is as in \eqref{eq:Texample}, then
\[
\Phi_1(\mathcal{T},\vec{g},h) =
\left[ h, \overline{[[g_2,g_3],g_4]}^{g_1} \right],
\]
and
\[
\Phi_2(\mathcal{T},\vec{g},h) =
\left[
\left[
[h^{\overline{g_1}},{g_1}^{[[g_2,g_3],g_4]}],
\overline{g_4}^{[g_2,g_3]}
\right],
\overline{g_3}^{g_2}
\right].
\]
\end{example}

Let $e$ be an edge of $\mathcal{T}$.
If $e$ is a leaf edge, let $\mathcal{T}_e := \mathcal{T}$.
Otherwise, cut~$\mathcal{T}$ at $t(e)$.
Then $\mathcal{T}$ decomposes into three parts:
let $\mathcal{T}_e$ be the connected component containing $e$ (and the root of $\mathcal{T}$),
and call $\mathcal{T}_e$ the \emph{pruning} of $\mathcal{T}$ at~$e$.
The value $\Phi_e(\mathcal{T},\vec{g},h)$ can be computed from the subtree $\mathcal{T}_e$ 
and its $G$-coloring induced by $\vec{g}$,
but it depends on the shape of $\mathcal{T} \setminus \mathcal{T}_e$  only through the commutator $\xi(e)$.
More explicitly, the following holds by construction.

\begin{lem}
\label{lem:TT_e}
Let $e$ be an edge of $\mathcal{T}$ and let $\vec{g}_e$ be the $G$-coloring of $\mathcal{T}_e$ induced by $\vec{g}$:
that is, the leaf of $\mathcal{T}_e$ that was the terminal vertex of $e$ is colored with~$\xi(e)$
and  all the other leaves of $\mathcal{T}_e$ are colored according to $\vec{g}$.
Then we have $\ \Phi_e(\mathcal{T},\vec{g},h) = \Phi_e(\mathcal{T}_e,\vec{g}_e,h)$.
\end{lem}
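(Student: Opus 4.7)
The plan is to proceed by induction on the depth of $e$ in $\mathcal{T}$, where the depth is the number of edges on the oriented path from the root edge to $e$. Since Definition \ref{dfn:xi_j} is built by a straight recursion along such paths, this induction is exactly adapted to the formula. The base case is when $e$ is the root edge: both $\Phi_e(\mathcal{T},\vec{g},h)$ and $\Phi_e(\mathcal{T}_e,\vec{g}_e,h)$ then equal $h$ by the first clause of the definition, and the claim is immediate. Note that when $e$ is a leaf edge of $\mathcal{T}$ the convention $\mathcal{T}_e := \mathcal{T}$ together with the fact that $\vec{g}_e = \vec{g}$ (the color of $t(e)$ is precisely $\xi(e)$) makes the claim tautological; this is in fact not a separate case, as it will fall out of the inductive step below.

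For the inductive step, assume $e$ is not the root edge and let $e_0$ be the unique edge with $t(e_0) = i(e)$. The plan is then to verify three compatibilities between the data attached to $\mathcal{T}$ and to $\mathcal{T}_e$. First, further pruning $\mathcal{T}_e$ at $t(e_0)$ gives back $\mathcal{T}_{e_0}$ as a planar binary rooted tree, and the induced $G$-coloring satisfies $(\vec{g}_e)_{e_0} = \vec{g}_{e_0}$; this holds because the portion of $\mathcal{T}$ cut off at $t(e_0)$ is the same whether it is removed from $\mathcal{T}$ directly or from $\mathcal{T}_e$. Second, the sibling edge $e'$ of $e$ at $i(e)$ carries the same commutator $\xi(e')$ in $\mathcal{T}$ and in $\mathcal{T}_e$, since the subtree hanging off $e'$ is untouched by the pruning at $t(e)$. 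Third, the edge $e$ itself is a leaf edge of $\mathcal{T}_e$, and the coloring $\vec{g}_e$ assigns to $t(e)$ precisely the element $\xi(e)$ which was its commutator value in $\mathcal{T}$.

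From the first compatibility together with the inductive hypothesis applied at $e_0$ (of depth strictly smaller than $e$) both inside $\mathcal{T}$ and inside $\mathcal{T}_e$, one deduces
\[
\Phi_{e_0}(\mathcal{T},\vec{g},h) \;=\; \Phi_{e_0}(\mathcal{T}_{e_0},\vec{g}_{e_0},h) \;=\; \Phi_{e_0}(\mathcal{T}_e,\vec{g}_e,h).
\]
The other two compatibilities guarantee that the elements $t_l$ and $t_r$ appearing in the recursive formula of Definition \ref{dfn:xi_j} agree in the two trees, and that Case $1$ or Case $2$ is selected identically, since the latter only depends on the cyclic position of $e$ at $i(e)$, which is a local datum preserved by pruning. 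Feeding the same three inputs into the same case then yields the desired equality $\Phi_e(\mathcal{T},\vec{g},h) = \Phi_e(\mathcal{T}_e,\vec{g}_e,h)$, closing the induction. I do not expect any genuine obstacle here: the lemma is essentially a formal consequence of the recursive definition, and the only point that requires care is the bookkeeping behind the three compatibility statements above.
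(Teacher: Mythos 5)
Your proof is correct and follows essentially the same route as the paper, which simply asserts the lemma ``by construction'': both amount to unwinding the recursion of Definition~\ref{dfn:xi_j} and noting that at each step the inputs $\Phi_{e_0}$, $t_l$, $t_r$ and the left/right case selection are unchanged by pruning. Your explicit induction on the depth of $e$, with the three compatibility checks (in particular that $(\mathcal{T}_e)_{e_0}=\mathcal{T}_{e_0}$ as $G$-colored trees because $\xi(e_0)$ is insensitive to collapsing the subtree below $e$ to a leaf colored $\xi(e)$), is a valid and complete write-up of that observation.
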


\subsection{A more explicit formula}

\label{subsec:mef}

We prove the main result of this section.
Recall that $k\geq 2$ is an integer, $\gamma$ is a closed curve in $\Sigma$ of nilpotency class~$\geq k$,
and, after an arbitrary choice of orientation and arc-basing  at   $\bullet$, 
the curve $\gamma$ is regarded as an element  of $\pi_1(\Sigma,\bullet)\cong \pi$.
Since $\gamma \in \Gamma_k \pi$, we can write
\begin{equation}
\label{eq:gamcomm}
\gamma = \prod_{i=1}^l \mathcal{T}_i(\gamma_{i1},\ldots,\gamma_{ik}),
\end{equation}
where $\mathcal{T}_i$ is a planar binary rooted tree with $k$ leaves and $\gamma_{i1},\ldots,\gamma_{ik} \in \pi$.
We represent each $\gamma_{ij}$ as an immersed loop based at $\bullet$.
Let $\vec{\gamma}_i := (\gamma_{i1},\ldots,\gamma_{ik})$.
With the notation of the previous subsection, let
\begin{equation} \label{eq:lambda_ij}
\lambda_{ij} := \Phi_j(\mathcal{T}_i,\vec{\gamma}_i,\gamma)
\in \Gamma_{2k-1} \pi.
\end{equation}

\begin{thm}
\label{thm:gdtaction}
Let $k\ge 2$, let $\varepsilon \in \{ -1, +1 \}$ and 
let $\gamma \subset \Sigma$ be a closed curve of nilpotency class $\ge k$ expressed as in \eqref{eq:gamcomm}.
Then, for any  $x\in \pi$ which we represent by
an immersed loop  based at $\star$ and intersecting
each $\gamma_{ij}$ in finitely many transverse double points,  we have
\begin{equation} \label{eq:t(x)/x}
(t_{\gamma})^{\varepsilon}(x)\, x^{-1} 
\equiv \Big( \prod_{i=1}^l \prod_{j=1}^k \prod_{p\in \gamma_{ij} \cap x}
\big( {\lambda_{ij}}^{\varepsilon(p;\gamma_{ij},x)} \big)^{x_{\star p}\overline{(\gamma_{ij})_{\bullet p}} \nu} \Big)^{\varepsilon} 
\mod \hpi_{2k+1}. 
\end{equation}
\end{thm}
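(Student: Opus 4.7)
The starting point is Corollary~\ref{cor:tgamx}(1), which reduces the problem modulo $\hat{I}^{2k+1}$ to computing $\widetilde{\kappa}(\gamma,x)\diamond(\gamma-1)$. The strategy is to expand $\widetilde{\kappa}(\gamma,x)$ along the commutator decomposition \eqref{eq:gamcomm} of $\gamma$, then use the explicit intersection formula \eqref{eq:Phiexplicit} to identify each resulting term with a conjugate of some $\lambda_{ij}^{\pm 1}-1$, and finally lift the algebra-level sum to a group-level product in $\hpi/\hpi_{2k+1}$.

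First, Lemma~\ref{lem:Phiproperties}(2) splits $\widetilde{\kappa}(\gamma,x)$ into a sum over the factors $\mathcal{T}_i(\vec{\gamma}_i)$, each carrying a suitable diamond correction. For each tree $\mathcal{T}_i$, I then iterate Lemma~\ref{lem:Phiproperties}(4) along $\mathcal{T}_i$, descending from the root toward each leaf. At every trivalent vertex, the two summands on the right-hand side of Lemma~\ref{lem:Phiproperties}(4) correspond to continuing into the left-hand or the right-hand subtree, which match exactly \textbf{Case~1} and \textbf{Case~2} of Definition~\ref{dfn:xi_j}, while the conjugating factors accumulated along the way are precisely those encoded inductively by $\Phi_e$. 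This yields an expansion
$$
\widetilde{\kappa}(\gamma,x)\diamond(\gamma-1) \equiv \sum_{i=1}^{l}\sum_{j=1}^{k} \widetilde{\kappa}(\gamma_{ij},x)\diamond X_{ij} \mod \hat{I}^{2k+1},
$$
where $X_{ij}$ depends on the commutators $\xi(e)$ sitting along the path from the root of $\mathcal{T}_i$ to its $j$th leaf together with the factor $\gamma-1$. The principal obstacle is verifying that the combinatorial bookkeeping reproduces the nested conjugation pattern of $\Phi_j$ rather than a twisted variant; I plan to settle this by induction on the number of edges traversed in the tree, relying on Lemma~\ref{lem:TT_e} to compare prunings of $\mathcal{T}_i$ at successive depths.

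The next step is to recognise $X_{ij}$ as being congruent to $\lambda_{ij}-1$ modulo $\hat{I}^{2k+1}$, after transportation across the first tensor factor of $\widetilde{\kappa}(\gamma_{ij},x)$. This is exactly what the definition \eqref{eq:lambda_ij} of $\lambda_{ij}$ is designed to achieve: the iterative commutator structure of $\Phi_j(\mathcal{T}_i,\vec{\gamma}_i,\gamma)$ mirrors the nested $\diamond$-products produced by the $\widetilde{\kappa}$-expansion, with $\gamma$ playing the role of the root input $h$. Since $\gamma-1\in \hat{I}^k$ and each $\xi(e)-1$ has the appropriate augmentation degree, all mismatching corrections fall into $\hat{I}^{2k+1}$. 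Substituting the explicit formula \eqref{eq:Phiexplicit} for $\widetilde{\kappa}(\gamma_{ij},x)$ and noting that the two tensor factors at each intersection point $p$ multiply to~$1$, the contribution of $p$ collapses to $\varepsilon(p;\gamma_{ij},x)$ times the conjugate of $\lambda_{ij}-1$ by $x_{\star p}\overline{(\gamma_{ij})_{\bullet p}}\nu$.

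To conclude, I observe that every $\lambda_{ij}$ lies in $\Gamma_{2k-1}\pi$, so the pairwise commutators of the individual factors on the right-hand side of \eqref{eq:t(x)/x} lie in $\Gamma_{4k-2}\pi\subset \Gamma_{2k+1}\pi$ for $k\geq 2$. Hence the order of the triple product is irrelevant modulo $\hpi_{2k+1}$, and the algebra-level sum $\sum (w_p-1)$ lifts uniquely to the group-level product $\prod w_p$. Combining this with Corollary~\ref{cor:tgamx}(1) and tracking the outer sign $\varepsilon$ yields the desired formula~\eqref{eq:t(x)/x}.
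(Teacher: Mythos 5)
Your proposal is correct and follows essentially the same route as the paper: reduce via Corollary \ref{cor:tgamx}\! (1), split over the commutator factors with Lemma \ref{lem:Phiproperties}\! (2), iterate Lemma \ref{lem:Phiproperties}\! (4) over each tree to produce the $\lambda_{ij}$ (this is exactly Proposition \ref{lem:Phigamx} in the paper, proved by induction on the number of leaves rather than by descending from the root, but the content is the same), then expand via \eqref{eq:Phiexplicit} and convert the sum to a product using Lemma \ref{lem:petticom}. The "principal obstacle" you flag is precisely what Proposition \ref{lem:Phigamx} resolves, using Lemma \ref{lem:ux} and Lemma \ref{lem:TT_e} as you anticipate.
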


\noindent
Note that since $\lambda_{ij} \in \Gamma_{2k-1}\pi$, the order of product in \eqref{eq:t(x)/x} can be arbitrary.

\begin{cor}
Let $k\ge 2$ and let $\gamma \subset \Sigma$ be a closed curve of nilpotency class $\ge k$.
Then $t_\gamma \in \Aut(\widehat{\pi}/\widehat{\pi}_{2k+1})$ restricts to an automorphism of~$\pi/\Gamma_{2k+1}\pi$.
\end{cor}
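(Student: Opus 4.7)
The plan is to derive this corollary as an immediate consequence of Theorem \ref{thm:gdtaction}. Because $\pi$ is free, each nilpotent quotient $\pi/\Gamma_{2k+1}\pi$ is a finitely generated torsion-free nilpotent group, so the canonical map
\begin{equation*}
\iota\colon \pi/\Gamma_{2k+1}\pi \longrightarrow \hpi/\hpi_{2k+1}
\end{equation*}
is injective. Showing that $t_\gamma$ restricts to an automorphism of $\pi/\Gamma_{2k+1}\pi$ therefore amounts to verifying that both $t_\gamma$ and $(t_\gamma)^{-1}$, viewed as elements of $\Aut(\hpi/\hpi_{2k+1})$, map the image of $\iota$ into itself.

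To see this, fix an arbitrary $x \in \pi$, represented by a loop based at $\star$ and transverse to each of the immersed loops $\gamma_{ij}$ appearing in the commutator decomposition \eqref{eq:gamcomm} of $\gamma$. Apply Theorem \ref{thm:gdtaction} once with $\varepsilon=+1$ and once with $\varepsilon=-1$. The right-hand side of \eqref{eq:t(x)/x} is a product of $\pi$-conjugates of powers $\lambda_{ij}^{\pm 1} \in \Gamma_{2k-1}\pi \subset \pi$; in particular it is a genuine element of $\pi$, not merely of $\hpi$. Consequently $(t_\gamma)^\varepsilon(x)$ lies in $\pi \cdot \hpi_{2k+1}$, and its class in $\hpi/\hpi_{2k+1}$ belongs to the image of $\iota$. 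Since this holds for every $x \in \pi$, the automorphism $(t_\gamma)^\varepsilon$ preserves $\iota(\pi/\Gamma_{2k+1}\pi)$, which is what was required.

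As for the main obstacle, there is essentially none: all the real work has already been done in Theorem \ref{thm:gdtaction}, whose very content is that the ``correction term'' $(t_\gamma)^\varepsilon(x)\, x^{-1}$ can be represented modulo $\hpi_{2k+1}$ by an honest element of $\pi$ rather than by some a priori more general element of the Malcev completion. The only conceptual ingredient beyond Theorem \ref{thm:gdtaction} is the injectivity of $\iota$, which is standard Malcev theory for finitely generated torsion-free nilpotent groups.
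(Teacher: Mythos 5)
Your proposal is correct and follows essentially the same route as the paper: both rely on the injectivity of $\pi/\Gamma_{2k+1}\pi \to \hpi/\hpi_{2k+1}$ (valid since $\pi$ is free) and then invoke formula \eqref{eq:t(x)/x} for both signs $\varepsilon=\pm 1$ to see that $(t_\gamma)^{\varepsilon}$ leaves this subgroup globally invariant. No issues.
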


\begin{proof}
Since $\pi$ is a free group, the canonical map $\pi\to  \hpi$ induces an injection 
$\pi/\Gamma_{2k+1}\pi \to \widehat{\pi}/\widehat{\pi}_{2k+1}$, 
so that we can regard $\pi/\Gamma_{2k+1}\pi$ as a subgroup of $\widehat{\pi}/\widehat{\pi}_{2k+1}$.
Then  \eqref{eq:t(x)/x} implies that $ (t_\gamma)^{\varepsilon} \in \Aut(\widehat{\pi}/\widehat{\pi}_{2k+1})$ 
leaves $\pi/\Gamma_{2k+1}\pi$ globally invariant  for each $\varepsilon \in \{ -1,+1\}$. 
The conclusion follows.
\end{proof}

The rest of this subsection is devoted to the proof of Theorem \ref{thm:gdtaction}.

\begin{lem}
\label{lem:ux}
Let $x\in \pi$ and $u\in \Gamma_k\pi$. Then
\[
(1\otimes 1-x\otimes \bar{x})\diamond (u-1)
\equiv_{2k+1} [u,x]-1.
\]
\end{lem}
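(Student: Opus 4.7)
The plan is to directly unpack the $\diamond$ operation and then use the multiplicativity of the $I$-adic filtration under the hypotheses on $u$ and $x$. This is essentially a short algebraic manipulation rather than a conceptual argument.

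First, by the definition of $\diamond$, the left-hand side expands as
\[
(1\otimes 1 - x\otimes \bar{x})\diamond (u-1) = (u-1) - x(u-1)\bar{x} = u - xu\bar{x}.
\]
Next I would rewrite this as a single factored expression by inserting $xux^{-1} \cdot \bar{x}^{-1}u^{-1}x = 1$ in the appropriate spot, or more directly by observing that
\[
u - xux^{-1} \,=\, \bigl([u,x] - 1\bigr)\cdot xux^{-1},
\]
which follows from $[u,x]\cdot xux^{-1} = uxu^{-1}x^{-1}\cdot xux^{-1} = u$.

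Now the hypothesis $u\in\Gamma_k\pi$ gives $u-1\in \hat I^k$, hence $xux^{-1}-1\in \hat I^k$; and $[u,x]\in\Gamma_{k+1}\pi$ yields $[u,x]-1\in \hat I^{k+1}$. Therefore
\[
\bigl([u,x] - 1\bigr)\cdot xux^{-1} \,=\, \bigl([u,x]-1\bigr) + \bigl([u,x]-1\bigr)\bigl(xux^{-1}-1\bigr),
\]
where the second summand lies in $\hat I^{k+1}\cdot \hat I^k = \hat I^{2k+1}$. Combining the two displayed equalities gives
\[
(1\otimes 1 - x\otimes \bar{x})\diamond (u-1) \,\equiv_{2k+1}\, [u,x]-1,
\]
which is the claim. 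The only thing to be careful about is the factorization step: one should verify the identity $u - xux^{-1} = ([u,x]-1)\cdot xux^{-1}$ by a direct computation in the group algebra, but this is immediate. There is no significant obstacle here; the lemma is purely algebraic and relies only on the basic filtration properties already recalled in Section~\ref{subsec:intop}.
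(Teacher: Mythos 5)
Your proof is correct and follows essentially the same route as the paper: expand the $\diamond$ operation to get $u - xu\bar{x}$, factor, and then use $u-1\in I^k$ and $[u,x]-1\in I^{k+1}$ together with $I^{k+1}\cdot I^k\subset I^{2k+1}$. The only (harmless) difference is the choice of factorization: the paper writes $u-xu\bar{x}=(1-[x,u])u$ and then needs one extra step to replace $1-[x,u]$ by $[u,x]-1$ modulo $I^{2k+2}$, whereas your factorization $u-xu\bar{x}=([u,x]-1)\,xu\bar{x}$ produces the commutator with the right sign from the start and slightly shortens the argument.
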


\begin{proof}
We have
$(1\otimes 1-x\otimes \bar{x})\diamond (u-1)
= u-xu\bar{x} = (1-[x,u])u$.
Since $u\in \Gamma_k\pi$ and $[x,u]\in \Gamma_{k+1}\pi$, we have
$(1-[x,u])u \equiv_{2k+1} (1-[x,u])$.
Furthermore, $1-[x,u] \equiv_{2k+2} [u,x]-1$ since
\[
0 \equiv_{2k+2} ([x,u]-1)([u,x]-1) = (1-[x,u]) - ([u,x]-1).
\]
This proves the lemma.
\end{proof}

\begin{lem}
\label{lem:petticom}
Let $x,y\in \Gamma_{k+1}\pi$ for some $k\ge 1$.
Then, we have
\begin{enumerate}
\item $(x-1) + (y-1) \equiv_{2k+2} (xy-1)$,
\item $(x^{-1}-1) + (x-1) \equiv_{2k+2} 0$,
\item $xy-1 \equiv_{2k+2} yx-1$.
\end{enumerate}
\end{lem}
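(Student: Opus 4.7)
The plan is to use the classical fact that if $x \in \Gamma_{k+1}\pi$, then $x-1 \in \hat{I}^{k+1}$ (this underlies the very construction of $\hat\pi$ from the $I$-adic filtration of $A$, and is used implicitly throughout this section). All three statements then fall out of the single algebraic identity
\[
xy - 1 = (x-1) + (y-1) + (x-1)(y-1),
\]
valid in $\hat{A}$ for any $x,y$, together with the observation that when $x,y \in \Gamma_{k+1}\pi$ the cross term $(x-1)(y-1)$ lies in $\hat{I}^{k+1}\cdot \hat{I}^{k+1} = \hat{I}^{2k+2}$ and is therefore negligible modulo $\hat{I}^{2k+2}$.

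First, for (1), I apply the identity directly: modulo $\hat{I}^{2k+2}$ the cross term vanishes, yielding $(xy-1) \equiv_{2k+2} (x-1)+(y-1)$.

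Second, for (2), I apply the identity to the pair $(x,x^{-1})$ (with $x^{-1}$ also in $\Gamma_{k+1}\pi$, so $x^{-1}-1 \in \hat I^{k+1}$). Since $xx^{-1}=1$, the left-hand side is $0$, and once again the quadratic cross term lies in $\hat{I}^{2k+2}$, giving $(x-1) + (x^{-1}-1) \equiv_{2k+2} 0$.

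Finally, for (3), I combine (1) with the commutativity of addition in $\hat{A}$:
\[
xy - 1 \equiv_{2k+2} (x-1) + (y-1) = (y-1) + (x-1) \equiv_{2k+2} yx - 1.
\]

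There is no real obstacle; the only thing to be careful about is the book-keeping of filtration degrees, namely confirming that $(x-1)(y-1) \in \hat{I}^{2k+2}$ whenever $x,y \in \Gamma_{k+1}\pi$, which follows from the compatibility of the $I$-adic filtration of $\hat{A}$ with the lower central series of $\pi$.
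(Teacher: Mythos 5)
Your proposal is correct and matches the paper's argument exactly: both rest on the identity $(x-1)(y-1)=(xy-1)-\big((x-1)+(y-1)\big)$ together with the fact that $x-1,\,y-1\in I^{k+1}$ forces the product into $I^{2k+2}$, and both deduce (2) by setting $y=x^{-1}$ and (3) from the symmetry of (1). No further comment is needed.
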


\begin{proof}
Since $x-1, y-1 \in I^{k+1}$, we have
\[
0 \equiv_{2k+2} (x-1)(y-1) = (xy-1) - \left( (x-1) + (y-1) \right).
\]
This proves (1).
(2) follows from (1) by putting $y=x^{-1}$, and (3) is a direct consequence of (1).
\end{proof}

\begin{prop}
\label{lem:Phigamx}
Let $1\le q \le k$, let $\mathcal{T}$ be a planar binary rooted tree with $q$ leaves colored by $\vec{\gamma}=(\gamma_1,\ldots,\gamma_q) \in \pi^q$, and let $\delta \in \Gamma_k\pi$.
Set $\gamma=\mathcal{T}(\vec{\gamma})$.
Then, for any $x\in \pi$, we have
\[
\widetilde{\kappa}(\gamma,x) \diamond (\delta-1)
\equiv_{2k+1}
\sum_{j=1}^q \widetilde{\kappa}(\gamma_j,x) \diamond
\big( \Phi_j(\mathcal{T},\vec{\gamma},\delta)-1 \big).
\]
\end{prop}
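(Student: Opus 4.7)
My plan is to induct on the number $q$ of leaves of $\mathcal{T}$. The base case $q=1$ is immediate: $\mathcal{T}$ consists of a single edge, $\gamma=\gamma_1$, and the first bullet of Definition~\ref{dfn:xi_j} gives $\Phi_1(\mathcal{T},\vec{\gamma},\delta)=\delta$, so both sides of the congruence equal $\widetilde{\kappa}(\gamma_1,x)\diamond(\delta-1)$.

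For the inductive step, I cut $\mathcal{T}$ at the trivalent vertex $v$ just above the root, obtaining a left subtree $\mathcal{T}_l$ with $q_l$ leaves colored by $\vec{\gamma}_l$ and a right subtree $\mathcal{T}_r$ with $q_r$ leaves colored by $\vec{\gamma}_r$, where $q_l+q_r=q$. Setting $\alpha:=\mathcal{T}_l(\vec{\gamma}_l)\in\Gamma_{q_l}\pi$ and $\beta:=\mathcal{T}_r(\vec{\gamma}_r)\in\Gamma_{q_r}\pi$, we have $\gamma=[\alpha,\beta]$. I would apply Lemma~\ref{lem:Phiproperties}\,(4) to $\widetilde{\kappa}([\alpha,\beta],x)\diamond(\delta-1)$, splitting it into two summands (after noting $(\bar{\alpha}\otimes\alpha)\diamond(\delta-1)=\delta^{\bar{\alpha}}-1$ in the second), and then Lemma~\ref{lem:ux} to each summand, to obtain
\[
\widetilde{\kappa}(\gamma,x)\diamond(\delta-1)
\equiv_{2k+1}
\widetilde{\kappa}(\alpha,x)\diamond\big([\delta,\bar{\beta}^{\alpha}]-1\big)
+ \widetilde{\kappa}(\beta,x)\diamond\big([\delta^{\bar{\alpha}},\alpha^{\beta}]-1\big).
\]
Each approximation error produced by Lemma~\ref{lem:ux} lies in $\hat{I}^{2k+1}$, and after left diamond-multiplication by $\widetilde{\kappa}(\alpha,x)$ or $\widetilde{\kappa}(\beta,x)$, whose filtration degree is $q_l-1\ge 0$ or $q_r-1\ge 0$ respectively, it stays in $\hat{I}^{2k+1}$ thanks to the degree-$(-2)$ property of $\kappa$.

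The key observation is that the two commutators on the right-hand side are exactly $\Phi_{e_l}(\mathcal{T},\vec{\gamma},\delta)$ and $\Phi_{e_r}(\mathcal{T},\vec{\gamma},\delta)$, where $e_l,e_r$ are the two non-root edges issuing from~$v$: one checks that $\xi(e_l)=\alpha$ and $\xi(e_r)=\beta$, so that Case~1 of Definition~\ref{dfn:xi_j} produces $[\delta,\bar{\beta}^{\alpha}]$ and Case~2 produces $[\delta^{\bar{\alpha}},\alpha^{\beta}]$. Unfolding Definition~\ref{dfn:xi_j} one further step shows that for each leaf $j$ of~$\mathcal{T}_l$,
\[
\Phi_j(\mathcal{T},\vec{\gamma},\delta)=\Phi_j\big(\mathcal{T}_l,\vec{\gamma}_l,[\delta,\bar{\beta}^{\alpha}]\big),
\]
and symmetrically for leaves of $\mathcal{T}_r$ with initial value $[\delta^{\bar{\alpha}},\alpha^{\beta}]$. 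Since both of these new initial values lie in $\Gamma_{k+1}\pi\subseteq\Gamma_k\pi$, and since $q_l,q_r<q$, I can invoke the induction hypothesis on each subtree; summing the two resulting congruences yields the desired identity.

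The main obstacle I anticipate is the filtration bookkeeping in the step above: one must track that the errors from Lemma~\ref{lem:ux} are absorbed into $\hat{I}^{2k+1}$ after being diamond-multiplied by $\widetilde{\kappa}(\alpha,x)$ or $\widetilde{\kappa}(\beta,x)$, which relies on $\alpha-1\in I^{q_l}$, $\beta-1\in I^{q_r}$ together with $q_l,q_r\ge 1$. A secondary point is matching the two branches of the recursion in Definition~\ref{dfn:xi_j} with the two summands produced by Lemma~\ref{lem:Phiproperties}\,(4), but the explicit formulas in these two statements appear designed to fit, so this should reduce to direct substitution.
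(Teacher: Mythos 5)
Your proof is correct, and it uses the same two computational ingredients as the paper --- Lemma \ref{lem:Phiproperties}\,(4) combined with Lemma \ref{lem:ux}, plus the observation that the recursion of Definition \ref{dfn:xi_j} restarts cleanly on a subtree --- but it organizes the induction differently. The paper inducts by contracting a \emph{cherry}: it picks two sibling leaves $i$, $i+1$, passes to the pruning $\mathcal{T}_{e_0}$ with $q-1$ leaves (whose new leaf is colored by $[\gamma_i,\gamma_{i+1}]$), applies the induction hypothesis to that smaller tree, and only afterwards expands the single term $\widetilde{\kappa}([\gamma_i,\gamma_{i+1}],x)\diamond(\cdots)$ via Lemmas \ref{lem:Phiproperties}\,(4) and \ref{lem:ux}. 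You instead split at the root into $\gamma=[\alpha,\beta]$, perform the Lemma \ref{lem:Phiproperties}\,(4)/Lemma \ref{lem:ux} expansion \emph{first} to produce the new initial values $[\delta,\bar{\beta}^{\alpha}]$ and $[\delta^{\bar{\alpha}},\alpha^{\beta}]$ --- which, as you check, are exactly $\Phi_{e_l}$ and $\Phi_{e_r}$ and lie in $\Gamma_{k+1}\pi\subset\Gamma_k\pi$, so the hypothesis on the initial value is preserved --- and then recurse into the two subtrees. Both schemes are valid, and your filtration bookkeeping is right: the $I^{2k+1}$ errors from Lemma \ref{lem:ux} survive left $\diamond$-multiplication by $\widetilde{\kappa}(\alpha,x)$ or $\widetilde{\kappa}(\beta,x)$ because these have non-negative filtration degree $q_l-1$ and $q_r-1$. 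The only point worth flagging is that your subtree-compatibility statement for $\Phi_j$ is the complementary one to Lemma \ref{lem:TT_e} (restarting the recursion below an edge rather than pruning above it); as you say, it is immediate from the inductive definition, but it is not literally the lemma stated in the paper.
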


\begin{proof}
We use induction on $q$. The case $q=1$ is clear. 

Let $q\ge 2$.
There is an index $i$ such that the $i$th 
and $(i+1)$st leaves of~$\mathcal{T}$ share their initial vertex, which we denote by $v$.
Let $e_0$ be the edge of $\mathcal{T}$ such that $t(e_0)=v$.
We denote by $\vec{\gamma}_{e_0}=(\gamma_1^0,\ldots,\gamma_{q-1}^0)$
the $\pi$-coloring of the pruning $\mathcal{T}_{e_0}$ induced by $\vec \gamma$.
Since $\gamma = \mathcal{T}_{e_0}(\vec{\gamma}_{e_0})$, by the inductive assumption we compute
\[
\widetilde{\kappa}(\gamma,x) \diamond (\delta -1) \equiv_{2k+1}
\sum_{j=1}^{q-1} \widetilde{\kappa}(\gamma_j^0,x) \diamond \big(\Phi_j(\mathcal{T}_{e_0},\vec{\gamma}_{e_0},\delta)-1\big).
\] 
If $j\le i-1$, $\gamma_j^0 = \gamma_j$.
If $j\ge i+1$, $\gamma_j^0 = \gamma_{j+1}$.
By the same reason as Lemma~\ref{lem:TT_e}, we have
\[
\Phi_j(\mathcal{T}_{e_0},\vec{\gamma}_{e_0},\delta) =
\begin{cases}
\Phi_j(\mathcal{T},\vec{\gamma},\delta) & \text{if $j\le i-1$}, \\
\Phi_{j+1}(\mathcal{T},\vec{\gamma},\delta) & \text{if $j\ge i+1$}.
\end{cases}
\] 
Since $\gamma_i^0 = [\gamma_i,\gamma_{i+1}]$, by Lemma \ref{lem:Phiproperties}\! (4) and Lemma \ref{lem:ux} we compute
\begin{align*}
& \widetilde{\kappa}(\gamma_i^0,x) \diamond (\Phi_i(\mathcal{T}_{e_0},\vec{\gamma}_{e_0},\delta)-1) \\
&= \widetilde{\kappa}(\gamma_i,x) \diamond (1\otimes 1- \overline{\gamma_{i+1}}^{\gamma_i} \otimes {\gamma_{i+1}}^{\gamma_i}) \diamond (\Phi_i(\mathcal{T}_{e_0},\vec{\gamma}_{e_0},\delta)-1) \\
& \quad +\widetilde{\kappa}(\gamma_{i+1},x) \diamond (1\otimes 1-{\gamma_i}^{\gamma_{i+1}} \otimes \overline{\gamma_i}^{\gamma_{i+1}}) \diamond (\overline{\gamma_i} \otimes \gamma_i) \diamond (\Phi_i(\mathcal{T}_{e_0},\vec{\gamma}_{e_0},\delta)-1) \\
&\equiv_{2k+1} \widetilde{\kappa}(\gamma_i,x) \diamond
\left( \big[ \Phi_i(\mathcal{T}_{e_0},\vec{\gamma}_{e_0},\delta), \overline{\gamma_{i+1}}^{\gamma_i}\big] -1 \right) \\
& \hspace{3em} +\widetilde{\kappa}(\gamma_{i+1},x) \diamond
\left( \big[ \Phi_i(\mathcal{T}_{e_0},\vec{\gamma}_{e_0},\delta)^{\overline{\gamma_i}},{\gamma_i}^{\gamma_{i+1}}\big] -1 \right) \\
&= \widetilde{\kappa}(\gamma_i,x) \diamond \Phi_i(\mathcal{T},\vec{\gamma},\delta) + \widetilde{\kappa}(\gamma_{i+1},x) \diamond \Phi_{i+1}(\mathcal{T},\vec{\gamma},\delta).
\end{align*}
In the last line we have used Lemma~\ref{lem:TT_e} and 
the defining formula for $\Phi_e(\mathcal{T},\vec{g},h)$ in Definition~\ref{dfn:xi_j}.
This completes the proof of the  proposition.
\end{proof}

\begin{proof}[Proof of Theorem \ref{thm:gdtaction}]
For each $i\in\{1,\dots,l\}$,
let $\gamma_i := \mathcal{T}_i(\gamma_{i1},\ldots,\gamma_{ik}) \in \Gamma_k \pi$, 
so that $\gamma = \gamma_1\cdots \gamma_l$. 
By Corollary \ref{cor:tgamx} and Lemma \ref{lem:Phiproperties}\! (2), we have
\begin{align*}
(t_{\gamma})^{\varepsilon}(x)\, x^{-1} -1 & \equiv_{2k+1}  \varepsilon\,  \widetilde{\kappa}(\gamma,x) \diamond (\gamma - 1) \\
& =  \varepsilon  \sum_{i=1}^l \widetilde{\kappa}(\gamma_i,x) \diamond (\overline{\gamma_1\cdots \gamma_{i-1}} \otimes \gamma_1\cdots \gamma_{i-1}) \diamond (\gamma-1) \\
&\equiv_{2k+1}  \varepsilon  \sum_{i=1}^l \widetilde{\kappa}(\gamma_i,x) \diamond (\gamma-1).
\end{align*} 
In the last line we have used the fact that $\gamma^{\overline{\gamma_1\ldots\gamma_{i-1}}} \equiv_{2k} \gamma$ and that $\widetilde{\kappa}(\gamma_i,x)=\widetilde{\kappa}(\gamma_i-1,x-1)$ is of degree $k-1$. 

Applying Proposition \ref{lem:Phigamx} with $q=k$, we obtain
\[
\widetilde{\kappa}(\gamma_i,x) \diamond (\gamma-1)
\equiv_{2k+1} \sum_{j=1}^k \widetilde{\kappa}(\gamma_{ij},x) \diamond (\lambda_{ij}-1).
\]
By using \eqref{eq:Phiexplicit} we can express $\widetilde{\kappa}(\gamma_{ij},x)$ as a sum over the intersections $\gamma_{ij} \cap x$.
Therefore,
\begin{align*}
(t_{\gamma})^{\varepsilon}(x)\, x^{-1}  -1 & \equiv_{2k+1}
\varepsilon  \sum_{i=1}^l \sum_{j=1}^k \widetilde{\kappa}(\gamma_{ij},x) \diamond (\lambda_{ij}-1) \\
&=  \varepsilon  \sum_{i=1}^l \sum_{j=1}^k \sum_{p\in \gamma_{ij}\cap x}
\varepsilon(p;\gamma_{ij},x)
\left( x_{\star p}\, \overline{(\gamma_{ij})_{\bullet p}}\, \nu\, \lambda_{ij}\, \bar{\nu}\, 
(\gamma_{ij})_{\bullet p}\, \overline{x_{\star p}} -1 \right) \\
&\equiv_{2k+1}
\Big(  \prod_{i=1}^l \prod_{j=1}^k \prod_{p\in \gamma_{ij} \cap x}
\big( {\lambda_{ij}}^{\varepsilon(p;\gamma_{ij},x)} \big)^{x_{\star p}\overline{(\gamma_{ij})_{\bullet p}} \nu} \Big)^{\varepsilon}  -1.
\end{align*}
In the last line we have used Lemma \ref{lem:petticom}.
This completes the proof of Theorem \ref{thm:gdtaction}.
\end{proof}

\section{Homology cylinders}   \label{sec:hc}

In this section, we are interested in the monoid homomorphisms
$$
\rho_{2k}\colon  \calC[k] \longrightarrow \Aut(\pi/\Gamma_{2k+1}\pi)
$$
defined on the  $k$th  term of the Johnson filtration of the monoid $\calC$  for all~${k\geq 1}$.
We first review an equivalent description of those homomorphisms.

\subsection{Morita homomorphisms} \label{subsec:Morita}

Let $k\geq 1$ be an integer.
The next lemma,  which is well-known, 
shows that the image of $\rho_{2k}$ is a commutative submonoid of $\Aut(\pi/\Gamma_{2k+1}\pi)$.

\begin{lem} \label{lem:motivation}
If  $\phi, \psi\in \Aut(\pi/\Gamma_{2k+1}\pi)$ induce the identity on $\pi/\Gamma_{k+1}\pi$, then they commute.
\end{lem}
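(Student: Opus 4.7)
The plan is to exploit the ``shift'' phenomenon: an automorphism $\phi$ of $\pi/\Gamma_{2k+1}\pi$ that is trivial modulo $\Gamma_{k+1}\pi$ must push the entire lower central series up by $k$ steps. Concretely, I will first establish the auxiliary claim that such a $\phi$ acts trivially on $\Gamma_{k+1}\pi/\Gamma_{2k+1}\pi$; then the commutativity $\phi\psi=\psi\phi$ will drop out of a short direct computation.

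\textbf{Step 1 (shift lemma).} I would prove by induction on $j\ge 1$ that if $\phi\in\Aut(\pi/\Gamma_{2k+1}\pi)$ is trivial on $\pi/\Gamma_{k+1}\pi$, then $\phi(y)\equiv y$ modulo $\Gamma_{j+k}\pi$ for every $y\in\Gamma_j\pi$. The base case $j=1$ is the hypothesis. For the inductive step, any $y\in\Gamma_j\pi$ is a product of commutators $[a,b]$ with $a\in\pi$ and $b\in\Gamma_{j-1}\pi$; writing $\phi(a)=a\alpha$ with $\alpha\in\Gamma_{k+1}\pi$ and $\phi(b)=b\beta$ with $\beta\in\Gamma_{j+k-1}\pi$ (by induction), the standard commutator identities $[xy,z]=x[y,z]x^{-1}\,[x,z]$ and $[x,yz]=[x,y]\,y[x,z]y^{-1}$ show that every ``error term'' lies in $\Gamma_{j+k}\pi$, whence $\phi([a,b])\equiv [a,b]$ modulo $\Gamma_{j+k}\pi$. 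Applying this with $j=k+1$ gives that $\phi$ fixes $\Gamma_{k+1}\pi/\Gamma_{2k+1}\pi$ pointwise, and similarly for $\psi$.

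\textbf{Step 2 (commutativity).} For each $x\in\pi/\Gamma_{2k+1}\pi$, write $\phi(x)=x\,\delta_\phi(x)$ and $\psi(x)=x\,\delta_\psi(x)$ with $\delta_\phi(x),\delta_\psi(x)\in\Gamma_{k+1}\pi/\Gamma_{2k+1}\pi$. Using Step 1 to see that $\phi$ and $\psi$ fix $\Gamma_{k+1}\pi/\Gamma_{2k+1}\pi$, I compute
\[
\phi\psi(x)=\phi(x)\,\phi(\delta_\psi(x))=x\,\delta_\phi(x)\,\delta_\psi(x),\qquad
\psi\phi(x)=x\,\delta_\psi(x)\,\delta_\phi(x).
\]
Since $\delta_\phi(x),\delta_\psi(x)\in\Gamma_{k+1}\pi$, their commutator lies in $\Gamma_{2k+2}\pi\subset\Gamma_{2k+1}\pi$, so the two expressions coincide in $\pi/\Gamma_{2k+1}\pi$, which is the desired identity $\phi\psi=\psi\phi$.

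\textbf{Main obstacle.} The only non-routine step is the shift lemma in Step 1; the algebraic manipulation of the commutator identities has to be carried out with enough care to confirm that \emph{all} cross terms involving $\alpha$ or $\beta$ actually fall into $\Gamma_{j+k}\pi$ (in particular, using that $[\Gamma_a\pi,\Gamma_b\pi]\subset\Gamma_{a+b}\pi$ and that conjugation by $\pi$ preserves the lower central series). Once that is in hand, Step 2 is a two-line computation.
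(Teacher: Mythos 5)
Your proof is correct and follows essentially the same route as the paper: write $\phi(x)=x\,\delta_\phi(x)$ and $\psi(x)=x\,\delta_\psi(x)$ with $\delta_\phi(x),\delta_\psi(x)\in\Gamma_{k+1}\pi/\Gamma_{2k+1}\pi$, observe that each automorphism fixes that (abelian) subgroup pointwise, and conclude. The only difference is that your Step~1 spells out, via induction on the lower central series, the ``shift'' fact $\psi(\{y\}_{2k})=\{y\}_{2k}$ for $y\in\Gamma_{k+1}\pi$, which the paper asserts in one phrase (``by assumption on $\psi$''); making it explicit is a harmless (indeed welcome) elaboration, not a different argument.
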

 
\noindent
In fact, it can be shown that $\ell:=2k$ is the greatest integer $\ell\geq k$ such that $\phi \circ \psi = \psi \circ \phi$
for any $\phi, \psi\in \Aut(\pi/\Gamma_{\ell+1}\pi)$ inducing the identity on $\pi/\Gamma_{k+1}\pi$.
 
\begin{proof}[Proof of Lemma \ref{lem:motivation}]
Let $x\in \pi$.
We have
$$
\phi(\{x\}_{2k}) = \{x\}_{2k}\, \{y\}_{2k} \quad \hbox{and} \quad \psi(\{x\}_{2k}) = \{x\}_{2k}\, \{z\}_{2k}
$$
where $y,z\in \Gamma_{k+1}\pi$. Hence
\begin{eqnarray*}
(\psi\circ \phi)(\{x\}_{2k}) &=&  \psi(\{x\}_{2k})\, \psi( \{y\}_{2k} ) \\
&=&  \{x\}_{2k}\, \{z\}_{2k} \, \psi( \{y\}_{2k} ) 
\ = \  \{x\}_{2k}\, \{z\}_{2k} \,  \{y\}_{2k}
\end{eqnarray*}
since, by assumption on $\psi$, we have $\psi( \{y\}_{2k} ) = \{y\}_{2k}$. A similar computation gives $(\phi\circ \psi)(\{x\}_{2k}) =  \{x\}_{2k}\, \{y\}_{2k} \,  \{z\}_{2k}$
and, since $\Gamma_{k+1}\pi/\Gamma_{2k+1}\pi$ is abelian, we conclude that $(\phi\circ \psi)(\{x\}_{2k}) = (\psi\circ \phi)(\{x\}_{2k}) $.
\end{proof} 

As we shall now recall, one can swap $\rho_{2k}\colon  \calC[k] \to \Aut(\pi/\Gamma_{2k+1}\pi)$ 
for another homomorphism whose target is explicitly an abelian group.
The latter  is the \emph{ $k$th  Morita homomorphism}, denoted by
$$
M_k\colon \calC[k] \longrightarrow H_3(\pi / \Gamma_{k+1}\pi).
$$ 
It was originally defined for the mapping class group of $\Sigma$ \cite{Mor93}
and, next, it has been  extended to the setting of homology  cobordisms \cite{Sakasai}. 

One direct way to define $M_k$ is to use the notion of ``$k$th nilpotent homotopy type'' 
for a closed oriented connected $3$-manifold $N$:
following Turaev~\cite{Tu84}, we define the latter to be the image
$$
[N]_k \in H_3\big(\pi_1(N)/\Gamma_{k+1}\pi_1(N)\big)
$$
of the fundamental class $[N]\in H_3(N)$ by the canonical homotopy class of maps $N\to K\big(\pi_1(N)/\Gamma_{k+1}\pi_1(N),1\big)$.
Then, for  any  $C \in \calC[k]$, one defines
$$
M_k(C) := [\widehat C\, ]_k \  \in  H_3(\pi / \Gamma_{k+1}\pi).
$$
 Here (and thereafter) the boundary parametrization $c$ of a homology cobordism $C$
is implicit in our notation, and  $\widehat C$ is the closed oriented $3$-manifold 
 obtained from $C$ by collapsing its ``vertical'' boundary $c(\partial \Sigma \times [-1,+1])$
to the circle $b :=c(\partial \Sigma \times \{0\})$ and  by gluing $\partial_+ C$ to $\partial_- C$
via the boundary parametrizations $c_\pm$ 
(thus, $\widehat{C}$ comes with an open-book decomposition with binding $b$ 
and  leaf $\partial_\pm C\cong \Sigma$).
Note that the group 
   $\pi_1(\widehat C)/\Gamma_{k+1} \pi_1(\widehat C)$ is identified with $\pi / \Gamma_{k+1}\pi$ via the composition
\begin{equation} \label{eq:nota}
\widehat{c}_\pm:= \big(\, \Sigma \stackrel{c_\pm}{\longrightarrow} C \hookrightarrow\widehat C\, \big).
\end{equation}
The equivalence between the above definition of $M_k$ (using fundamental classes of $3$-manifolds) 
and Morita's original definition (using the bar complex to compute group homology)  is observed in \cite{Heap}. See also \cite{Sakasai}.

The next statement says that $\rho_{2k}$ is tantamount to $M_k$.

\begin{thm}[Heap, Sakasai] \label{thm:HS}
For any $C,C'\in \calC[k]$, we have $\rho_{2k}(C)=\rho_{2k}(C')$ if and only if $M_k(C)=M_k(C')$.
\end{thm}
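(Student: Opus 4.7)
The plan is to leverage the two equivalent descriptions of $M_k$ mentioned in the text: the topological one via the fundamental class $[\widehat C]_k$, and Morita's original algebraic one via the bar complex of $G:=\pi/\Gamma_{k+1}\pi$. The equivalence of these descriptions, due to \cite{Heap} (see also \cite{Sakasai}), expresses $M_k(C)$ as a canonical function of the automorphism $\rho_{2k}(C) \in \Aut(\pi/\Gamma_{2k+1}\pi)$ alone. This immediately yields the forward implication $\rho_{2k}(C)=\rho_{2k}(C')\Rightarrow M_k(C)=M_k(C')$, reducing the theorem to proving the converse.

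The converse is the essential content. My plan is to show that the natural map
$$\mu\colon \Aut^0(\pi/\Gamma_{2k+1}\pi) \longrightarrow H_3(\pi/\Gamma_{k+1}\pi),$$
where $\Aut^0$ denotes the abelian group of automorphisms of $\pi/\Gamma_{2k+1}\pi$ inducing the identity modulo $\Gamma_{k+1}\pi$, is injective when restricted to the image of $\rho_{2k}$. I would encode such an automorphism $\phi$ by its associated crossed homomorphism $\delta_\phi\colon \pi \to \Gamma_{k+1}\pi/\Gamma_{2k+1}\pi$ (the map $x\mapsto \phi(x)x^{-1}$), construct $\mu(\phi)$ as an explicit bar-complex $3$-cycle built from $\delta_\phi$ together with the extension cocycle of
$$1\to\Gamma_{k+1}\pi/\Gamma_{2k+1}\pi\to\pi/\Gamma_{2k+1}\pi\to G\to 1,$$
and then analyze the construction through the Lyndon--Hochschild--Serre spectral sequence of this extension. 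Using that $\pi$ is free, one can compute enough of the $E^2$-page to control the relevant differentials.

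The main obstacle is that $\mu$ is \emph{not} injective on all of $\Aut^0(\pi/\Gamma_{2k+1}\pi)$: its kernel contains inner automorphisms by elements of $\Gamma_{k+1}\pi/\Gamma_{2k+1}\pi$. The crucial observation is that automorphisms in $\rho_{2k}(\calC[k])$ satisfy two strong geometric constraints: they preserve the boundary-parallel element $\zeta=[\partial \Sigma]$, and they arise from the Stallings-type identification of the two copies of $\Sigma$ in $C$ through honest isomorphisms of nilpotent quotients. Together these constraints rule out the ``inner'' directions in the kernel of $\mu$ and force injectivity on $\rho_{2k}(\calC[k])$. Verifying this will require tracking the boundary-preservation condition carefully through the bar-complex construction and invoking Stallings' theorem to pin down the fundamental-group data attached to the closed manifold $\widehat C$.
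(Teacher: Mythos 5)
Your forward implication is essentially fine: granting the equivalence, observed in \cite{Heap}, between the topological definition of $M_k$ and Morita's bar-complex definition, the latter is computed from the automorphism $\rho_{2k}(C)$ of $\pi/\Gamma_{2k+1}\pi$ alone, so $\rho_{2k}(C)=\rho_{2k}(C')$ does force $M_k(C)=M_k(C')$. The gap is in the converse, which is the entire content of the theorem: what you must prove there is exactly the statement $\ker M_k=\calC[2k]$, i.e$.$ $M_k(C)=0\Rightarrow\rho_{2k}(C)=\operatorname{id}$, and this is the main theorem of \cite{Sakasai} (Theorem 7.1; see \cite{Heap}, Corollary 6, for the mapping class group). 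Your plan reduces it to ``compute enough of the $E^2$-page of the Lyndon--Hochschild--Serre spectral sequence'' and ``track the boundary-preservation condition carefully'', which restates the difficulty rather than resolving it. Moreover, the structural picture underlying your ``crucial observation'' is unreliable. The image of $\rho_{2k}$ on $\calC[k]$ \emph{does} contain nontrivial inner automorphisms: for $k=2$ the Dehn twist along a boundary-parallel curve lies in $\calM[2]$ and acts on $\pi$ by conjugation by $\zeta=[\partial\Sigma]\in\Gamma_2\pi$, which is a nontrivial inner automorphism of $\pi/\Gamma_5\pi$; the theorem then forces its Morita class to be nonzero. So the principle ``inner automorphisms lie in $\ker\mu$'' fails for realizable automorphisms, and the burden of your argument --- that conjugations by elements of $\Gamma_{k+1}\pi$ are killed by $\mu$ \emph{and} are excluded from the image of $\rho_{2k}$ by the $\zeta$-preservation and Stallings-type constraints --- is asserted, not proved. (Note also that, at least for $k=2$, conjugation by $g\in\Gamma_{k+1}\pi$ does fix $\zeta$ modulo $\Gamma_{2k+1}\pi$, so $\zeta$-preservation alone cannot do the job you assign to it.)

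You also miss the device that makes the paper's proof short: both $\rho_{2k}$ and $M_k$ are invariant under homology cobordism of homology cobordisms, hence descend to \emph{group} homomorphisms on the homology cobordism group of \cite{GL}. Two group homomorphisms defined on the same group and having the same kernel have the same fibers, so the biconditional reduces precisely to the kernel identity $\ker M_k=\calC[2k]=\ker\rho_{2k}$, which is the cited result of Heap and Sakasai. If you want a self-contained argument you must reprove that kernel identity; the known proofs go through the bordism group of $K(\pi/\Gamma_{k+1}\pi,1)$ or the acyclic closure of the free group, not a direct spectral-sequence computation of the kind you sketch.
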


\begin{proof}[About the proof]
Two homology cobordisms $C$ and $C'$ of $\Sigma$ are said to be \emph{homologically cobordant} 
if there exists a compact oriented $4$-manifold $W$ such that 
$$
\partial W= C' \cup_{c'\circ c^{-1}} (-C)
$$
and the inclusion $C \hookrightarrow W$ (resp. $C' \hookrightarrow W$) induces an isomorphism in homology.
The quotient  of the monoid $\calC$ by this equivalence relation is a group~\cite{GL}.
It turns out that each of the monoid homomorphisms $\rho_{2k}$ and $M_k$ preserves  this equivalence relation
and, so, each of them factorizes to  a group homomorphism. Thus, the theorem reduces to the fact that 
$$
\ker M_k = \calC[2k]
$$
which has been proved in \cite[Theorem 7.1]{Sakasai}.
(See \cite[Corollary 6]{Heap} in the case of the mapping class group.)
\end{proof}

\subsection{Surgeries along knots} \label{subsec:surgeries}

Let $N$ be a compact oriented  $3$-manifold,
with a null-homologous oriented  knot $K$ in its interior.
The tubular neighborhood of $K$ is denoted by $\operatorname{T}(K)$. 
A \emph{parallel} of $K$ is an oriented  simple closed curve on $\partial \operatorname{T}(K)$
which is homotopic to $K$ in $\operatorname{T}(K)$.
Let
$$
\lambda_0(K) \subset \partial \operatorname{T}(K)
$$ 
be the \emph{preferred} parallel of $K$, 
i.e$.$ the unique parallel that is null-homologous in $N \setminus \operatorname{int\, T}(K)$.
Let also
$$
\mu(K )\subset \partial \operatorname{T}(K)
$$ 
be the \emph{oriented meridian} of $K$, i.e$.$ the unique oriented simple closed curve 
 bounding an oriented  disk in  $\operatorname{T}(K)$
whose intersection number with $K$ is~$+1$.
The knot $K$ is said to be (integrally) \emph{framed} if a parallel is specified, 
and it is \emph{$p$-framed} if this parallel is $p\, \mu(K )+ \lambda_0(K)$   in homology, for some $p\in \Z$.
Recall that $K$ is \emph{of nilpotency class $\geq r$} if $[K]$ belongs to $\Gamma_r \pi_1(N)$.

\begin{prop}[Cochran--Gerges--Orr]  \label{prop:CGO}
Let $N$ be a compact and oriented $3$-manifold
with a $(\pm 1)$-framed knot $K$ in its interior, and assume that $K$ is of nilpotency class $\geq r$ with  $r\geq 2$.
We denote by $N':=N_K$ the $3$-manifold that is obtained from $N$ by surgery along~$K$. 
Then, we have the following:
\begin{itemize}
 \item[(i)] The homotopy classes of $\mu(K)$ and $\lambda_0(K)$ belong to $\Gamma_r \pi_1(N')$.
 \item[(ii)] There is a unique isomorphism $\phi$ 
 such that the following diagram is commutative:
\begin{equation}  \label{eq:triangle}
\xymatrix @!0 @R=1cm @C=3cm {
  & \frac{\pi_1(N )}{\Gamma_r \pi_1(N)}  \ar@{-->}[dd]^-\phi \\
\frac{\pi_1\big(N \setminus \operatorname{int\, T}(K)  \big)}{\Gamma_r \pi_1\big(N \setminus \operatorname{int\, T}(K)  \big)} \ar@{->>}[ru]  \ar@{->>}[rd] & \\
  & \frac{\pi_1(N')}{\Gamma_r \pi_1(N')} 
 }   
\end{equation} 
 \item[(iii)] If $N$ is furthermore closed and connected, then the nilpotent homotopy types $[N]_{r-1}$ and  $[N']_{r-1}$ 
 correspond each other through the  isomorphism
 $$
 H_3\Big( \frac{\pi_1(N )}{\Gamma_r \pi_1(N)} \Big) \stackrel{\phi_*}{\longrightarrow}  H_3\Big( \frac{\pi_1(N')}{\Gamma_r \pi_1(N')} \Big).
 $$
\end{itemize}
\end{prop}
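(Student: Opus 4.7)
My plan is to base all three claims on the common presentation $\pi := \pi_1(N \setminus \operatorname{int\, T}(K))$ of $\pi_1(N)$ and $\pi_1(N')$. Write $\mu := \mu(K)$ and $\lambda_0 := \lambda_0(K)$; these commute in $\pi$, and denoting by $\langle\langle w \rangle\rangle$ the normal closure of $w$ in $\pi$, one has
\[
P := \pi_1(N) = \pi/\langle\langle\mu\rangle\rangle, \qquad P' := \pi_1(N') = \pi/\langle\langle \mu^p\lambda_0\rangle\rangle.
\]
Since $p \in \{-1, +1\}$, the defining relation of $P'$ rewrites as $\mu = \lambda_0^{-p}$. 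The hypothesis $[K] \in \Gamma_r P$ unfolds into an equation
\[
\lambda_0 = c \cdot \prod_{i} g_i\, \mu^{\varepsilon_i}\, g_i^{-1} \quad \text{in } \pi,
\]
with $c \in \Gamma_r \pi$, $g_i \in \pi$ and $\varepsilon_i \in \{\pm 1\}$. Abelianizing, using that $[\lambda_0] = 0$ in $H_1(N \setminus \operatorname{int\, T}(K))$ by definition of the preferred longitude while $[\mu]$ has infinite order there because $K$ is null-homologous in $N$, yields the crucial balancing condition $\sum_i \varepsilon_i = 0$.

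To prove (i), I would substitute $\mu = \lambda_0^{-p}$ into the identity above to obtain
\[
\lambda_0 = c \cdot \prod_i g_i\, \lambda_0^{-p\varepsilon_i}\, g_i^{-1} \quad \text{in } P',
\]
and then show $\lambda_0 \in \Gamma_s P'$ by induction on $s$ from $1$ to $r$. The inductive step is the heart of the argument: assuming $\lambda_0 \in \Gamma_s P'$ for some $1 \leq s < r$, the element $\lambda_0$ is central in $P'/\Gamma_{s+1} P'$, so each conjugate $g_i\, \lambda_0^{-p\varepsilon_i}\, g_i^{-1}$ collapses to $\lambda_0^{-p\varepsilon_i}$ modulo $\Gamma_{s+1}P'$, and the product telescopes to $\lambda_0^{-p\sum_i \varepsilon_i} = 1$ by the balancing condition; combined with $c \in \Gamma_r \pi \subseteq \Gamma_{s+1} P'$, this forces $\lambda_0 \in \Gamma_{s+1} P'$. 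At $s = r$ one concludes $\lambda_0 \in \Gamma_r P'$, and hence also $\mu = \lambda_0^{-p} \in \Gamma_r P'$.

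For (ii), part (i) provides $\mu \in \Gamma_r \pi \cdot \langle\langle \mu^p\lambda_0\rangle\rangle$ while the hypothesis yields $\lambda_0 \in \Gamma_r \pi \cdot \langle\langle\mu\rangle\rangle$; together, these inclusions force $\Gamma_r \pi \cdot \langle\langle\mu\rangle\rangle = \Gamma_r \pi \cdot \langle\langle\mu^p\lambda_0\rangle\rangle$ in $\pi$, so that $P/\Gamma_r P$ and $P'/\Gamma_r P'$ coincide as quotients of $\pi$; this provides $\phi$, and uniqueness follows from the surjectivity of the two slanted arrows. For (iii), I would use the surgery trace $W := (N \times [0,1]) \cup h^2_p$, a $2$-handle cobordism with $\partial W = N \sqcup (-N')$. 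Van Kampen gives $\pi_1(W) = \pi_1(N)/\langle\langle [K]\rangle\rangle$; since $[K] \in \Gamma_r \pi_1(N)$, the inclusion $N \hookrightarrow W$ induces an isomorphism $P/\Gamma_r P \stackrel{\sim}{\to} \pi_1(W)/\Gamma_r\pi_1(W)$. The dual description $\pi_1(W) = \pi_1(N')/\langle\langle \mu\rangle\rangle$---coming from the fact that the belt circle of $h^2_p$ is the core of the new solid torus in $N'$ and thus homotopic there to $\mu$---together with (i) gives the analogous isomorphism from the $N'$-side, and the composition of the two must coincide with $\phi$ by uniqueness. A classifying map $W \to K(\pi_1(W)/\Gamma_r \pi_1(W), 1)$ restricts on $\partial W$ to the canonical maps realizing $[N]_{r-1}$ and $[N']_{r-1}$; the nullbordism of $\partial W$ in $W$ then forces $\phi_*([N]_{r-1}) = [N']_{r-1}$.

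The main obstacle is the inductive step in (i): the gain of one unit of commutator depth per step is delicate and depends on the balancing condition $\sum_i \varepsilon_i = 0$, which uses both the $\pm 1$-framing (to rewrite $\mu$ as a power of $\lambda_0$) and the preferred longitude being null-homologous. Once (i) is established, (ii) is purely formal and (iii) is a standard surgery-cobordism argument.
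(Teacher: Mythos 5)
Your proposal is correct and follows essentially the same route as the paper's proof: the same expression of $\lambda_0$ as $c$ times a product of conjugates of $\mu^{\pm 1}$ in $\pi_1(N\setminus \operatorname{int\, T}(K))$, the same balancing condition $\sum_i\varepsilon_i=0$ extracted from the null-homology of $\lambda_0$, a bootstrapping induction for (i), and the surgery trace $W$ with a classifying-map argument for (iii). The only (harmless) organizational differences are that you run the induction on the depth $s$ inside a fixed $r$ using the centrality of $\lambda_0$ modulo $\Gamma_{s+1}$ (the paper inducts on $r$ itself via the statement for class $r-1$), and in (ii) you deduce $\Gamma_r\pi_1(E)\cdot\langle\!\langle\mu\rangle\!\rangle=\Gamma_r\pi_1(E)\cdot\langle\!\langle\mu^{p}\lambda_0\rangle\!\rangle$ directly from the two inclusions, which bypasses the paper's appeal to the reversibility of the surgery.
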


\begin{proof}
The three assertions can be found in \cite{CGO} but, for the sake of completeness, 
let us prove each of them. We denote by  $E:= N \setminus \operatorname{int\, T}(K)$ the exterior of the knot $K$,
and we recall that $N'$ is obtained from $E$ by gluing a solid torus $S^1\times D^2$: 
specifically, $S^1 \times \partial D^2$ is identified to the simple closed curve of $\partial \operatorname{T}(K)=\partial E$ 
that is homologous to $\pm \mu(K) + \lambda_0(K)$, where $\pm 1$ is the framing number.

(i) This is proved in \cite[Proposition 2.1]{CGO} using gropes, 
but we prefer to reprove this here in a rather different way.
Observe  that $\mu(K )^{\pm 1}\cdot \lambda_0(K) \in \pi_1(\partial \operatorname{T}(K))$ is trivial in $\pi_1(N')$:
hence $\mu(K) \in \Gamma_r \pi_1(N')$ if and only if $ \lambda_0(K) \in \Gamma_r \pi_1(N')$.
The proof is then by induction on $r\geq 2$. There is nothing to check for $r=2$ 
since $\lambda_0(K)$ is already null-homologous in $E$. Assume that (i) holds true at rank $r-1$.
Since $\lambda_0(K)$ is homotopic to $K$ in $N$, we have $\lambda_0(K) \in \Gamma_r \pi_1(N)$.
Since the kernel of the  homomorphism $j\colon \pi_1(E) \to \pi_1(N)$ induced by the inclusion 
is normally generated by $\mu(K)$,  we can write
\begin{equation} \label{eq:lambda_0}
\pi_1(E) \ni \lambda_0(K) = c \cdot \prod_{i=1}^s \big(x_i\, \mu(K)^{\epsilon_i}\,  x_i^{-1}\big)
\end{equation}
where $c \in \Gamma_r \pi_1(E)$, $\epsilon_1,\dots, \epsilon_s$ are signs
 and $x_1,\dots,x_s \in \pi_1(E)$. 
 Since $K$ is of nilpotency class $\geq r$, it is certainly of nilpotency class $\geq (r-1)$:
hence, by the induction hypothesis, we have $\mu(K)\in \Gamma_{r-1} \pi_1(N')$.
 Therefore, denoting by $j'\colon \pi_1(E) \to \pi_1(N')$
 the homomorphism induced by the inclusion, 
 we obtain
\begin{eqnarray*}
\pi_1(N') \ni \lambda_0(K) &=& j'(c) \cdot \prod_{i=1}^s \big[j'(x_i) , \mu(K)^{\epsilon_i}\big] \mu(K)^{\epsilon_i} \\
&\equiv  & \prod_{i=1}^s \mu(K)^{\epsilon_i}  \mod \Gamma_r \pi_1(N').
\end{eqnarray*}
Besides, since $\lambda_0(K)$ is null-homologous in $E$, formula \eqref{eq:lambda_0} implies that $\sum_{i=1}^s \epsilon_i=0$.
We conclude that $\lambda_0(K) \in \Gamma_r \pi_1(N')$.

For future use, observe  that $\mu(K) \in \Gamma_r \pi_1(N')$ implies the following: 
$N$~is obtained from $N'$ by $(\mp 1)$-surgery along a knot $K'$ of nilpotency class $\geq r$.

(ii) 
The unicity of a group isomorphism $\phi$ such that \eqref{eq:triangle}
commutes follows from the fact that the group homomorphism $j\colon \pi_1(E) \to \pi_1(N)$ is surjective.
The inclusion $E \subset N$ induces an isomorphism
$$
\frac{\pi_1(E)}{\langle\!\langle \Gamma_r \pi_1(E), \mu(K) \rangle\!\rangle} \stackrel{\psi}{\longrightarrow} \frac{\pi_1(N)}{ \Gamma_r \pi_1(N)} 
$$
and, according to (i),  the inclusion $E \subset N'$ induces a group homomorphism
$$
\frac{\pi_1(E)}{\langle\!\langle \Gamma_r \pi_1(E), \mu(K) \rangle\!\rangle} \stackrel{\psi'}{\longrightarrow} \frac{\pi_1(N')}{ \Gamma_r \pi_1(N')}. 
$$
Then $\phi := \psi' \psi^{-1}$ fits into a commutative triangle \eqref{eq:triangle}. 
Thanks to the observation that concluded the proof of (i),
we can exchange the roles of $N$ and $N'$: hence $\phi$ is an isomorphism.

(iii) This follows from the implication (A)$\Rightarrow$(B) in \cite[Theorem~6.1]{CGO}. 
We rephrase their arguments below.
Let $W$ be the compact oriented $4$-manifold that is obtained from $N\times [0,1]$ 
by attaching a $2$-handle along the solid torus $\operatorname{T}(K)\subset N\times \{1\}$ according to the given framing of $K$:
hence $W$ is a cobordism between $N$ and $N'$. 
Using the van Kampen theorem and the fact that $K\in \Gamma_r \pi_1(N)$, 
we see that the inclusion of $N$ into $W$ induces an isomorphism
$$
\beta\colon \pi_1(N)/\Gamma_r \pi_1(N) \longrightarrow \pi_1(W)/\Gamma_r \pi_1(W)
$$
and, by the symmetry observed at the end of the proof of (i), there is also an isomorphism
$$
\beta'\colon \pi_1(N')/\Gamma_r \pi_1(N') \longrightarrow \pi_1(W)/\Gamma_r \pi_1(W)
$$
induced by the inclusion of $N'$ into $W$. By an appropriate commutative diagram of inclusions, 
we see that $\phi = (\beta')^{-1} \beta$. Therefore, we are reduced to show that 
\begin{equation} \label{eq:k}
\beta_*([N]_{r-1}) = \beta'_*([N']_{r-1}) \in H_3\Big(\frac{\pi_1(W)}{\Gamma_r \pi_1(W)}\Big).
\end{equation}
The canonical homotopy class of maps $ W \to K(\pi_1(W)/\Gamma_r \pi_1(W),1)$ precomposed with the inclusions
$k\colon N\to W$ and $k'\colon N'\to W$ yields  homotopy classes of maps
$$
\zeta\colon N \to K(\pi_1(W)/\Gamma_r \pi_1(W),1) \quad \hbox{and} \quad \zeta'\colon N' \to K(\pi_1(W)/\Gamma_r \pi_1(W),1).
$$
Clearly $\zeta_*([N])= \beta_*([N]_{r-1})$ and $\zeta'_*([N'])= \beta'_*([N']_{r-1})$.
Thus \eqref{eq:k} follows from the obvious identity $k_*([N]) = k'_*([N']) \in H_3(W)$.
\end{proof}

\subsection{Behaviour under surgeries}

We now start to study the behaviour of the homomorphisms $\rho_{2j}\colon \calC[j] \to \Aut(\pi/\Gamma_{2j+1} \pi)$
under surgeries along null-homologous knots.
For that, we will use the equivalence between $\rho_{2j}$ and $M_j$ recalled in Section \ref{subsec:Morita}.

\begin{lem} \label{lem:surgery}
Let $k\geq 2$ be an integer.
Let $C\in \calC$ and let $C':=C_L$ be the homology cobordism
obtained by surgery along a $(\pm 1)$-framed knot 
 $L\subset C$  of nilpotency class~$\geq k$:
\begin{itemize}
\item[(1)] for all $j\in \{1,\dots,2k-2\}$, $C\in \calC[j]$ if and only if $C' \in \calC[j]$;
\item[(2)] if $C\in\calC[k-1]$, then $\rho_{2k-2}(C)= \rho_{2k-2}(C')$.
\end{itemize}
\end{lem}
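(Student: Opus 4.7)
The plan is to combine Proposition~\ref{prop:CGO} of Cochran--Gerges--Orr with the equivalence between $\rho_{2k-2}$ and the Morita homomorphism $M_{k-1}$ provided by Theorem~\ref{thm:HS}. The key observation underlying the whole argument is that, since $L$ lies in the interior of $C$, both $c_\pm$ and $c'_\pm$ factor through the common exterior $E := C\setminus \operatorname{int\, T}(L)$ via the same map $\Sigma\to E$; an analogous statement holds for $\hat{c}_\pm$ and $\hat{c}'_\pm$ after collapsing the vertical boundary and gluing, since surgery on $L$ commutes with the hat construction.

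I will first treat~(1) in the range $j\in\{1,\dots,k-1\}$. Fix such a $j$ and set $r := j+1 \leq k$. Applying Proposition~\ref{prop:CGO}(ii) to $C$ with the knot $L$ (of nilpotency class $\geq k \geq r$) produces an isomorphism $\phi\colon \pi_1(C)/\Gamma_r \pi_1(C) \to \pi_1(C')/\Gamma_r\pi_1(C')$ characterized by the commutativity of~\eqref{eq:triangle}. In view of the factorization of $c_\pm$ and $c'_\pm$ through $E$, this forces $\phi\circ (c_\pm)_\ast = (c'_\pm)_\ast$ at the level of $\pi/\Gamma_r \pi$, so that $\rho_j(C) = \rho_j(C')$ and $C\in \calC[j] \iff C'\in\calC[j]$.

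To prove~(2), I assume $C\in\calC[k-1]$, which by the previous step forces $C'\in\calC[k-1]$ as well. This hypothesis is precisely what makes the maps $\hat{c}_\pm$ and $\hat{c}'_\pm$ induce well-defined identifications $\pi/\Gamma_k\pi \cong \pi_1(\hat{C})/\Gamma_k\pi_1(\hat{C})$ and $\pi/\Gamma_k\pi \cong \pi_1(\hat{C}')/\Gamma_k\pi_1(\hat{C}')$. Since the surjection $\pi_1(C)\twoheadrightarrow \pi_1(\hat{C})$ preserves the lower central series, $L$ retains nilpotency class $\geq k$ in $\hat{C}$. Proposition~\ref{prop:CGO}(iii) applied to $\hat{C}$ with knot $L$ and $r=k$ therefore gives $\phi_\ast([\hat{C}]_{k-1}) = [\hat{C}']_{k-1}$, and by the same $E$-compatibility as in the previous step $\phi$ becomes the identity under the identifications through $\hat{c}_\pm$ and $\hat{c}'_\pm$. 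Hence $M_{k-1}(C) = M_{k-1}(C')$ in $H_3(\pi/\Gamma_k\pi)$, and Theorem~\ref{thm:HS} yields $\rho_{2k-2}(C) = \rho_{2k-2}(C')$.

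It remains to establish~(1) for $j\in\{k,\dots,2k-2\}$. If $C\in\calC[k-1]$, then $C'\in\calC[k-1]$ and~(2) gives $\rho_{2k-2}(C) = \rho_{2k-2}(C')$, which projects to $\rho_j(C) = \rho_j(C')$ and yields the equivalence. Otherwise, the first step forces $C'\notin\calC[k-1]$ as well, so neither $C$ nor $C'$ lies in $\calC[j]\subset\calC[k-1]$ and the equivalence is vacuous. The main subtle point throughout is to verify that the CGO isomorphism $\phi$ of Proposition~\ref{prop:CGO}(ii)--(iii) becomes the identity once both source and target are identified with an intrinsic nilpotent quotient of $\pi$; this is exactly what the common factorization of the boundary parametrizations through the exterior provides, and it is what turns the abstract correspondence of nilpotent homotopy types in (iii) into a genuine equality in $H_3(\pi/\Gamma_k\pi)$.
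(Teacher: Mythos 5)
Your proof is correct and follows essentially the same route as the paper: Proposition~\ref{prop:CGO}\,(ii) applied to $C$ handles the range $j\le k-1$ of~(1), Proposition~\ref{prop:CGO}\,(ii)--(iii) applied to $\widehat{C}$ combined with Theorem~\ref{thm:HS} gives~(2), and~(2) then yields the range $j\ge k$ of~(1). The only (harmless) deviation is in the converse direction for $j\in\{k,\dots,2k-2\}$: the paper invokes the invertibility of surgery along a knot of nilpotency class $\ge k$, whereas you use the dichotomy that either both $C,C'$ lie in $\calC[k-1]$ (so $\rho_{2k-2}$ agrees and the equivalence follows) or neither does (so the equivalence is vacuous) --- which works equally well.
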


\begin{proof}
According to Proposition \ref{prop:CGO}\! (ii), there exists  a group isomorphism 
$$\phi\colon \pi_1(C)/\Gamma_{k} \pi_1( C) \longrightarrow \pi_1( C')/\Gamma_{k} \pi_1(C')$$ 
such that 
$$
\phi \circ {c}_\pm = {c'}_\pm \colon 
\pi / \Gamma_{k} \pi \longrightarrow \pi_1(C')/\Gamma_{k} \pi_1(C').
$$
Therefore
$$
\rho_{k-1}(C') = (c'_-)^{-1} \circ c'_+= (\phi c_-)^{-1} \circ (\phi c_+) = (c_-)^{-1} \circ c_+ = \rho_{k-1}(C).
$$
We deduce that,  for all $j \in \{1,\dots,k-1\}$, $C\in \calC[j]$  if and only if  $C'\in \calC[j]$.

This proves ``half'' of assertion (1). We now  prove assertion (2) and, for that, we assume that $C\in\calC[k-1]$ and use the notation  \eqref{eq:nota}.
According to the statements (ii) and (iii) of Proposition \ref{prop:CGO}, there is an isomorphism
$$\widehat \phi\colon \pi_1(\widehat C)/\Gamma_{k} \pi_1(\widehat C) \longrightarrow \pi_1( \widehat C')/\Gamma_{k} \pi_1(\widehat C')$$ 
such that
\begin{equation} \label{eq:c_c'}
\widehat\phi \circ {\widehat c}_\pm = {\widehat c'}_\pm\colon 
\pi / \Gamma_{k} \pi \longrightarrow \pi_1(\widehat C')/\Gamma_{k} \pi_1(\widehat C')
\end{equation}
and
\begin{equation} \label{eq:phi}
\widehat \phi\big([\widehat C\, ]_{k-1}\big)  = [\widehat C'\, ]_{k-1} \in H_3\big( \pi_1(\widehat C')/\Gamma_{k} \pi_1(\widehat C') \big).
\end{equation}
By combining \eqref{eq:c_c'} with \eqref{eq:phi}, we obtain that $M_{k-1}(C)= M_{k-1}(C')$
or, equivalently by Theorem \ref{thm:HS}, $\rho_{2k-2}(C) =\rho_{2k-2}(C')$. 

Finally, we prove the second ``half'' of  (1).
Let $j\in\{k,\dots, 2k-2\}$ and assume that $C\in\calC[j]$.
Then $C \in \calC[k-1]$ and $\rho_{2k-2}(C) =\rho_{2k-2}(C')$ by the previous paragraph.
In particular, the action of $C'$ on $\pi/\Gamma_{j+1}\pi$ is the same as that of $C$, i.e$.$ is trivial: therefore $C'\in\calC[j]$.
The converse implication $C'\in\calC[j] \Rightarrow C\in\calC[j]$ is also true since surgery along a knot of nilpotency class $\geq k$
is invertible (see the proof of Proposition \ref{prop:CGO}\! (i)).
\end{proof}

\begin{lem} \label{lem:independence}
Let $k\geq 2$ be an integer and let $\varepsilon\in \{-1,+1\}$.
Let $C \in \calC[k]$  and let $L\subset C$ be an $\varepsilon$-framed knot of nilpotency class $\geq k$. 
Then
$$
\rho_{2k}(C_L) \in \Aut(\pi/\Gamma_{2k+1} \pi) 
$$
only depends on $C$, $\varepsilon$ and the free homotopy class of  $L$.
\end{lem}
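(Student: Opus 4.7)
My plan is to reduce the lemma to an application of Lemma \ref{lem:surgery}(2), with the integer $k$ there replaced by $k+1$. The idea is that two freely homotopic $\varepsilon$-framed knots $L$ and $L'$ in $C$ can be connected by a finite sequence of crossing changes, and each crossing change can be realized as a Dehn surgery along an auxiliary knot whose nilpotency class in the already-surgered manifold is $\geq k+1$, which is just enough to invoke the lemma.

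After a standard transversality argument applied to the free homotopy $S^1 \times [0,1] \to C$, we may assume that $L$ and $L'$ differ by a single crossing change inside a small ball $B \subset C$. Such a crossing change can be realized by $(\pm 1)$-framed surgery along a small unknot $U \subset B$ whose bounding disk meets the two local strands of $L$ transversely in two points with opposite signs (concretely, at a transverse self-intersection with linearly independent tangent vectors $\vec{v}_1,\vec{v}_2$, one takes $U$ to bound a small disk normal to the direction $\vec{v}_1 - \vec{v}_2$); in particular, $\mathrm{lk}(U,L)=0$. By a standard Kirby calculus argument, there is then a diffeomorphism $C_U \cong C$ under which the image of $L$ is identified with $L'$, and the condition $\mathrm{lk}(U,L)=0$ implies that the framing number of $L$ is preserved, so $L'$ is again $\varepsilon$-framed.

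Reordering the two surgeries produces the crucial identity $(C_L)_{U^L} = C_{L'}$, where $U^L \subset C_L$ denotes the image of $U$ under the natural inclusion $C \setminus L \hookrightarrow C_L$. The core technical step is to show that $U^L$ has nilpotency class $\geq k+1$ in $\pi_1(C_L)$. Indeed, in $\pi_1(C \setminus L)$ the curve $U$ is freely homotopic to a word of the form $m_1 m_2^{-1}$, where $m_1$ and $m_2$ are based meridians of $L$ at the two strands passing through the disk. Under the inclusion, both $m_1$ and $m_2$ map to conjugates of $\mu := \mu(L) \in \pi_1(C_L)$, and Proposition \ref{prop:CGO}(i) gives $\mu \in \Gamma_k \pi_1(C_L)$. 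Since $g\mu g^{-1} = [g,\mu]\cdot \mu \in \mu \cdot \Gamma_{k+1}\pi_1(C_L)$ for every $g$, any conjugate of $\mu$ is congruent to $\mu$ modulo $\Gamma_{k+1}\pi_1(C_L)$, so $U^L \equiv \mu\mu^{-1} = 1 \pmod{\Gamma_{k+1}\pi_1(C_L)}$.

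The conclusion then follows by applying Lemma \ref{lem:surgery} with $k$ replaced by $k+1$: part (1) gives $C_L \in \calC[k]$ (since $C \in \calC[k]$ and $L$ has nilpotency class $\geq k$), and part (2), applied to $C_L \in \calC[k]$ and the $(\pm 1)$-framed knot $U^L$ of class $\geq k+1$, yields $\rho_{2k}(C_L) = \rho_{2k}((C_L)_{U^L}) = \rho_{2k}(C_{L'})$. The main obstacle in this plan is the nilpotency-class estimate for $U^L$; it rests crucially on the zero-linking property of the surgery disk (which lets us express $U$ as a difference of two meridians) and on the deep position of $\mu(L)$ in the lower central series of $\pi_1(C_L)$ guaranteed by Proposition \ref{prop:CGO}(i).
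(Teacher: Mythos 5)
Your proposal is correct and follows essentially the same route as the paper: reduce to a single crossing change, realize it by $(\pm 1)$-surgery on the boundary of a small disk meeting $L$ in two oppositely-signed points, observe that in $C_L$ this curve is a product of a meridian and an inverse conjugate meridian and hence lies in $\Gamma_{k+1}\pi_1(C_L)$ by Proposition \ref{prop:CGO}\!~(i), and conclude by Lemma \ref{lem:surgery}\!~(2) applied with $k$ shifted to $k+1$. The only point treated more carefully in the paper is the verification that the auxiliary curve is null-homologous in $C_L$ with unchanged framing (done there by tubing the disk along $\partial\operatorname{T}(L)$), which your zero-linking observation covers in substance.
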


\begin{proof}
We know from Lemma \ref{lem:surgery}\! (1) that $C_L \in \calC[k]$.
Let $L'$ be another $\varepsilon$-framed knot in $C$ which can be obtained from $L$ by a single ``crossing-change'' move.
Then it suffices to show that
\begin{equation} \label{eq:L_L'}
\rho_{2k}\big(C_L\big) = \rho_{2k}\big(C_{L'}\big).
\end{equation}

There exist $e\in \{-1,+1\}$ and a small disk $D$ in $C$ 
which meets  transversely~$L$ in two points with opposite signs,
such that surgery along the $e$-framed knot  $\partial D$ 
transforms $L$ to $L'$. 
By adding a tube to  $D \cap \big(C \setminus \operatorname{int\, T}(L)\big)$,
we obtain a Seifert surface of genus one for $\partial D$ in~$C_{L}$: therefore,
$\partial D$ is null-homologous in  $C_L$ and its framing number in $C_L$ is the same as in $C$.
Thus $C_{L'}$ is obtained from $C_{L}$ by surgery along the $e$-framed knot $\partial D$.
Furthermore, the loop $\partial D \subset C_L$ 
is homotopic to the concatenation of two meridians of $L$, say
$$
m,m'\subset C \setminus \operatorname{int\, T}(L) \subset C _{L}
$$
with opposite signs. Therefore
$$
[\partial D] = \mu (x \mu^{-1} x^{-1})  \in \pi_1\big(C_{L}\big)
$$
where $\mu :=[m] \in \pi_1\big(C_{L}\big)$, for some $x \in \pi_1\big(C_{L}\big)$.
By Proposition \ref{prop:CGO}\! (i) and our assumption on $L$, we have $\mu  \in \Gamma_{k}\pi_1\big(C_{L}\big)$.
It follows that $[\partial D]  \in \Gamma_{k+1} \pi_1\big(C_{L}\big)$ 
and \eqref{eq:L_L'} then follows from Lemma~\ref{lem:surgery}\! (2).
\end{proof}

\section{Proof of the main theorem}    \label{sec:main_theorem}

In this section, we prove Theorem A  in the introduction.

\subsection{Notations.}

Let $\gamma \subset \Sigma$ be a closed curve of nilpotency class $\ge k$, 
where $k\geq 2$ is an integer.
Take a representative of $\gamma$ as a loop based at~$\bullet$
 and express it as a product of commutators of length $k$ as in~\eqref{eq:gamcomm}:
\begin{equation} \label{eq:gamcomm_bis}
\gamma = \prod_{i=1}^l \mathcal{T}_i(\gamma_{i1},\ldots,\gamma_{ik}).
\end{equation}
As in Section \ref{subsec:intop}, $\pi_1(\Sigma,\bullet)$ is identified 
with $\pi_1(\Sigma,\star)=\pi$ using the orientation-preserving arc $\nu \subset \partial \Sigma$ connecting $\bullet$ to $\star$.

Let  $\star_{\pm} := (\star,\pm 1)$ and $\bullet_{\pm} := (\bullet,\pm 1)$ in $U =\Sigma \times [-1,+1]$.
We identify $\pi_1(U,\star_+)$ with $\pi_1(U,\star_-)$ 
by using the vertical segment $\{\star \} \times [-1,+1]$.
The same convention applies to the fundamental group of the knot exterior ${U \setminus \operatorname{int\, T}(L)}$ 
and that of the surgered manifold $U_L$ for any knot resolution $L$ of $\gamma$. In particular,
this convention is used in the definition of $\rho_{2k}(U_L)$ as the composition of two automorphisms.

\subsection{Adapted knot resolution}
\label{subsec:akr}

By Lemma \ref{lem:independence}, we know that $\rho_{2k}(U_L)$ is independent 
of the choice of a knot resolution $L$ of the curve $\gamma$.
Making use of the expression \eqref{eq:gamcomm_bis} of $\gamma$, 
we shall construct a knot resolution of $\gamma$ which will be convenient for our computation.

In what follows, we fix a sufficiently small positive real number $\delta$.

\vskip 1em
\noindent \textbf{Step 1.}
Let $D=[0,1]\times [0,1]$ be the unit square in $\mathbb{R}^2$.
Given a planar binary rooted tree $\mathcal{T}$ with $k$ leaves, we construct a tangle 
diagram $\widetilde{\mathcal{T}}$ on $D$  which represents an oriented tangle in $D\times [-1,+1]$.

First take an orientation-preserving embedding $\mathcal{T} \subset D$ such that the root of $\mathcal{T}$ is mapped to $(\frac{1}{2},0)$ and for each $j$ with $1\le j\le k$, the $j$th leaf of $\mathcal{T}$ is mapped to $\big(\frac{j}{k+1},1\big)$.

Next, we do the following operation around each trivalent vertex $v$ of~$\mathcal{T}$.
Let $Y_1$ be the following tangle diagram:
\[
{\unitlength 0.1in%
\begin{picture}(16.2000,10.4000)(2.0000,-14.0000)%
%
\special{pn 8}%
\special{ar 1160 1280 80 80 4.7123890 6.2831853}%
%
\special{pn 8}%
\special{ar 1440 1280 80 80 3.1415927 4.7123890}%
%
\special{pn 8}%
\special{pa 1160 1200}%
\special{pa 880 1200}%
\special{fp}%
%
\special{pn 8}%
\special{pa 1440 1200}%
\special{pa 1720 1200}%
\special{fp}%
%
\special{pn 8}%
\special{ar 880 1120 80 80 1.5707963 3.1415927}%
%
\special{pn 8}%
\special{ar 1720 1120 80 80 6.2831853 1.5707963}%
%
\special{pn 8}%
\special{ar 1700 1040 40 40 6.2831853 1.5707963}%
%
\special{pn 8}%
\special{pa 1240 1080}%
\special{pa 1360 1080}%
\special{fp}%
%
\special{pn 8}%
\special{ar 1360 1040 40 40 6.2831853 1.5707963}%
%
\special{pn 8}%
\special{ar 1240 1040 40 40 1.5707963 3.1415927}%
%
\special{pn 8}%
\special{ar 1100 1040 40 40 6.2831853 1.5707963}%
%
\special{pn 8}%
\special{ar 1500 1040 40 40 1.5707963 3.1415927}%
%
\special{pn 8}%
\special{pa 1240 1400}%
\special{pa 1240 1280}%
\special{fp}%
%
\special{pn 8}%
\special{pa 1360 1400}%
\special{pa 1360 1280}%
\special{fp}%
%
\special{pn 8}%
\special{ar 1280 1040 120 120 5.3558901 6.2831853}%
%
\special{pn 8}%
\special{ar 1280 1040 180 180 5.3558901 6.2831853}%
%
\special{pn 8}%
\special{ar 1320 1040 180 180 3.1415927 4.0688879}%
%
\special{pn 8}%
\special{ar 1320 1040 120 120 3.1415927 4.0688879}%
%
\special{pn 8}%
\special{ar 1100 530 180 180 2.2142974 3.1415927}%
%
\special{pn 8}%
\special{pa 1352 944}%
\special{pa 992 674}%
\special{fp}%
%
\special{pn 8}%
\special{ar 1100 530 120 120 2.2142974 3.1415927}%
%
\special{pn 8}%
\special{pa 1028 626}%
\special{pa 1388 896}%
\special{fp}%
%
\special{pn 8}%
\special{ar 900 1040 40 40 1.5707963 3.1415927}%
%
\special{pn 8}%
\special{pa 900 1080}%
\special{pa 1100 1080}%
\special{fp}%
%
\special{pn 8}%
\special{ar 1620 440 120 120 6.2831853 0.9272952}%
%
\special{pn 8}%
\special{ar 1620 440 180 180 6.2831853 0.9272952}%
%
\special{pn 8}%
\special{pa 1212 896}%
\special{pa 1236 878}%
\special{fp}%
%
\special{pn 8}%
\special{pa 1248 944}%
\special{pa 1288 914}%
\special{fp}%
%
\special{pn 8}%
\special{pa 1360 860}%
\special{pa 1728 584}%
\special{fp}%
%
\special{pn 8}%
\special{pa 1308 824}%
\special{pa 1692 536}%
\special{fp}%
%
\special{pn 8}%
\special{ar 1800 680 120 120 2.2142974 3.1415927}%
%
\special{pn 8}%
\special{ar 1800 680 180 180 2.2142974 3.1415927}%
%
\special{pn 8}%
\special{pa 1680 680}%
\special{pa 1680 640}%
\special{fp}%
%
\special{pn 8}%
\special{ar 1620 920 120 120 5.3558901 6.2831853}%
%
\special{pn 8}%
\special{ar 1620 920 180 180 5.3558901 6.2831853}%
%
\special{pn 8}%
\special{pa 1740 920}%
\special{pa 1740 1040}%
\special{fp}%
%
\special{pn 8}%
\special{pa 1800 920}%
\special{pa 1800 1120}%
\special{fp}%
%
\special{pn 8}%
\special{pa 1700 1080}%
\special{pa 1500 1080}%
\special{fp}%
%
\special{pn 8}%
\special{pa 800 1120}%
\special{pa 800 360}%
\special{fp}%
%
\special{pn 8}%
\special{pa 860 360}%
\special{pa 860 1040}%
\special{fp}%
%
\special{pn 8}%
\special{pa 920 360}%
\special{pa 920 530}%
\special{fp}%
%
\special{pn 8}%
\special{pa 980 360}%
\special{pa 980 530}%
\special{fp}%
%
\special{pn 8}%
\special{pa 1620 560}%
\special{pa 1620 360}%
\special{fp}%
%
\special{pn 8}%
\special{pa 1680 520}%
\special{pa 1680 360}%
\special{fp}%
%
\special{pn 8}%
\special{pa 1740 440}%
\special{pa 1740 360}%
\special{fp}%
%
\special{pn 8}%
\special{pa 1800 360}%
\special{pa 1800 440}%
\special{fp}%
\put(2.0000,-9.0000){\makebox(0,0)[lb]{$Y_1$\hspace{1em}$=$}}%
%
\special{pn 8}%
\special{pa 800 400}%
\special{pa 780 440}%
\special{fp}%
\special{pa 800 400}%
\special{pa 820 440}%
\special{fp}%
%
\special{pn 8}%
\special{pa 860 440}%
\special{pa 840 400}%
\special{fp}%
\special{pa 860 440}%
\special{pa 880 400}%
\special{fp}%
%
\special{pn 8}%
\special{pa 980 440}%
\special{pa 960 400}%
\special{fp}%
\special{pa 980 440}%
\special{pa 1000 400}%
\special{fp}%
%
\special{pn 8}%
\special{pa 920 400}%
\special{pa 900 440}%
\special{fp}%
\special{pa 920 400}%
\special{pa 940 440}%
\special{fp}%
%
\special{pn 8}%
\special{pa 1620 400}%
\special{pa 1600 440}%
\special{fp}%
\special{pa 1620 400}%
\special{pa 1640 440}%
\special{fp}%
%
\special{pn 8}%
\special{pa 1680 440}%
\special{pa 1660 400}%
\special{fp}%
\special{pa 1680 440}%
\special{pa 1700 400}%
\special{fp}%
%
\special{pn 8}%
\special{pa 1740 400}%
\special{pa 1720 440}%
\special{fp}%
\special{pa 1740 400}%
\special{pa 1760 440}%
\special{fp}%
%
\special{pn 8}%
\special{pa 1800 440}%
\special{pa 1780 400}%
\special{fp}%
\special{pa 1800 440}%
\special{pa 1820 400}%
\special{fp}%
%
\special{pn 8}%
\special{pa 1240 1280}%
\special{pa 1220 1320}%
\special{fp}%
\special{pa 1240 1280}%
\special{pa 1260 1320}%
\special{fp}%
%
\special{pn 8}%
\special{pa 1360 1320}%
\special{pa 1340 1280}%
\special{fp}%
\special{pa 1360 1320}%
\special{pa 1380 1280}%
\special{fp}%
%
\special{pn 8}%
\special{pa 1020 1080}%
\special{pa 980 1060}%
\special{fp}%
\special{pa 1020 1080}%
\special{pa 980 1100}%
\special{fp}%
%
\special{pn 8}%
\special{pa 1320 1080}%
\special{pa 1280 1060}%
\special{fp}%
\special{pa 1320 1080}%
\special{pa 1280 1100}%
\special{fp}%
%
\special{pn 8}%
\special{pa 1620 1080}%
\special{pa 1580 1060}%
\special{fp}%
\special{pa 1620 1080}%
\special{pa 1580 1100}%
\special{fp}%
%
\special{pn 8}%
\special{pa 1580 1200}%
\special{pa 1620 1180}%
\special{fp}%
\special{pa 1580 1200}%
\special{pa 1620 1220}%
\special{fp}%
%
\special{pn 8}%
\special{pa 980 1200}%
\special{pa 1020 1180}%
\special{fp}%
\special{pa 980 1200}%
\special{pa 1020 1220}%
\special{fp}%
\end{picture}}%
\]
For each positive integer $d$, let $L_d$ be the diagram
\begin{equation} \label{eq:cabling}
{\unitlength 0.1in%
\begin{picture}(19.2500,7.0500)(0.5000,-10.0500)%
%
\special{pn 8}%
\special{pa 1350 900}%
\special{pa 1350 300}%
\special{fp}%
%
\special{pn 8}%
\special{pa 1450 300}%
\special{pa 1450 900}%
\special{fp}%
%
\special{pn 8}%
\special{pa 1850 900}%
\special{pa 1850 300}%
\special{fp}%
%
\special{pn 8}%
\special{pa 1950 300}%
\special{pa 1950 900}%
\special{fp}%
%
\special{pn 8}%
\special{pa 1350 600}%
\special{pa 1325 650}%
\special{fp}%
\special{pa 1350 600}%
\special{pa 1375 650}%
\special{fp}%
%
\special{pn 8}%
\special{pa 1450 600}%
\special{pa 1425 550}%
\special{fp}%
\special{pa 1450 600}%
\special{pa 1475 550}%
\special{fp}%
%
\special{pn 8}%
\special{pa 1950 600}%
\special{pa 1925 550}%
\special{fp}%
\special{pa 1950 600}%
\special{pa 1975 550}%
\special{fp}%
%
\special{pn 8}%
\special{pa 1850 600}%
\special{pa 1825 650}%
\special{fp}%
\special{pa 1850 600}%
\special{pa 1875 650}%
\special{fp}%
\put(15.7500,-6.0000){\makebox(0,0)[lb]{$\cdots$}}%
%
\special{pn 8}%
\special{pa 800 300}%
\special{pa 800 900}%
\special{fp}%
\put(6.7500,-6.2500){\makebox(0,0)[lb]{$d$}}%
\put(10.0000,-6.2500){\makebox(0,0)[lb]{$=$}}%
\put(0.5000,-6.5000){\makebox(0,0)[lb]{$L_d$\hspace{1em}$=$}}%
\put(13.5000,-11.5000){\makebox(0,0)[lb]{$\underbrace{\hspace{4em}}_{2^d}$}}%
\end{picture}}%
\end{equation}
and let $Y_d$ be the tangle diagram obtained from $Y_1$ by replacing its strands with $L_{d-1}$.
(We understand that $L_0$ is a trivial strand.)
For simplicity, we denote
\[
{\unitlength 0.1in%
\begin{picture}(10.5000,7.0200)(2.5000,-12.0000)%
\put(2.5000,-9.2500){\makebox(0,0)[lb]{$Y_d$\hspace{1em}$=$}}%
%
\special{pn 8}%
\special{pa 900 800}%
\special{pa 1300 800}%
\special{pa 1300 1000}%
\special{pa 900 1000}%
\special{pa 900 800}%
\special{pa 1300 800}%
\special{fp}%
%
\special{pn 8}%
\special{pa 1100 1000}%
\special{pa 1100 1200}%
\special{fp}%
%
\special{pn 8}%
\special{ar 1042 606 120 120 2.2142974 3.1415927}%
%
\special{pn 8}%
\special{ar 898 798 120 120 5.3558901 6.2831853}%
%
\special{pn 8}%
\special{pa 922 606}%
\special{pa 922 566}%
\special{fp}%
%
\special{pn 8}%
\special{ar 1158 606 120 120 6.2831853 0.9272952}%
%
\special{pn 8}%
\special{ar 1302 798 120 120 3.1415927 4.0688879}%
%
\special{pn 8}%
\special{pa 1278 606}%
\special{pa 1278 566}%
\special{fp}%
\put(10.5000,-9.5000){\makebox(0,0)[lb]{$d$}}%
\put(10.0000,-11.7500){\makebox(0,0)[lb]{{\tiny $d$}}}%
\put(13.0000,-6.5000){\makebox(0,0)[lb]{{\tiny $d+1$}}}%
\put(6.5000,-6.5000){\makebox(0,0)[lb]{{\tiny $d+1$}}}%
\end{picture}}%
\]
Using the notation of Definition \ref{dfn:xi_j}, let $e_l, e_r,e_0$ be the edges around $v$.
Then replace a neighborhood of $v$ in $\mathcal{T}$ with the diagram $Y_{d(v)}$:
\[
{\unitlength 0.1in%
\begin{picture}(18.6000,6.9500)(2.4000,-12.0000)%
%
\special{pn 8}%
\special{pa 1700 800}%
\special{pa 2100 800}%
\special{pa 2100 1000}%
\special{pa 1700 1000}%
\special{pa 1700 800}%
\special{pa 2100 800}%
\special{fp}%
%
\special{pn 8}%
\special{pa 1900 1000}%
\special{pa 1900 1200}%
\special{fp}%
%
\special{pn 8}%
\special{ar 1842 606 120 120 2.2142974 3.1415927}%
%
\special{pn 8}%
\special{ar 1698 798 120 120 5.3558901 6.2831853}%
%
\special{pn 8}%
\special{pa 1722 606}%
\special{pa 1722 566}%
\special{fp}%
%
\special{pn 8}%
\special{ar 1958 606 120 120 6.2831853 0.9272952}%
%
\special{pn 8}%
\special{ar 2102 798 120 120 3.1415927 4.0688879}%
%
\special{pn 8}%
\special{pa 2078 606}%
\special{pa 2078 566}%
\special{fp}%
\put(17.5000,-9.7500){\makebox(0,0)[lb]{$d(v)$}}%
\put(16.7500,-11.7500){\makebox(0,0)[lb]{{\tiny $d(v)$}}}%
\put(21.0000,-6.5000){\makebox(0,0)[lb]{{\tiny $d(v)+1$}}}%
\put(13.0000,-6.5000){\makebox(0,0)[lb]{{\tiny $d(v)+1$}}}%
%
\special{pn 8}%
\special{pa 680 600}%
\special{pa 500 900}%
\special{fp}%
%
\special{pn 8}%
\special{pa 500 900}%
\special{pa 500 1200}%
\special{fp}%
%
\special{pn 8}%
\special{pa 320 600}%
\special{pa 500 900}%
\special{fp}%
\put(3.8000,-9.4000){\makebox(0,0)[lb]{$v$}}%
\put(3.6000,-11.0000){\makebox(0,0)[lb]{$e_0$}}%
\put(2.4000,-7.6000){\makebox(0,0)[lb]{$e_l$}}%
\put(6.4000,-7.6000){\makebox(0,0)[lb]{$e_r$}}%
%
\special{pn 4}%
\special{sh 1}%
\special{ar 500 900 16 16 0 6.2831853}%
\put(10.5000,-9.0500){\makebox(0,0)[lb]{$\leadsto$}}%
\end{picture}}%
\]
Here, given a (univalent or trivalent) vertex $w$ of $\mathcal{T}$,
we denote by $d(w)$ the \emph{depth} of $w$, 
that is the length of the shortest path from $w$ to the root of~$\mathcal{T}$.
Do this operation at each trivalent vertex of $\mathcal{T}$ 
and join the ends of the Y-shaped diagrams thus obtained along the edges of $\mathcal{T}$ in the obvious way.

Finally, we do the following operations around the root and the leaves of~$\mathcal{T}$.
In the above process, the root edge of $\mathcal{T}$ has been replaced with $L_1$:
we connect the two bottom ends of $L_1$ to $(\frac{1}{2}\mp \delta,0) \in \partial D$.
Let now $e$ be a leaf edge and $v=i(e)$.
Suppose that $t(e)$ is the $j$th leaf.
The edge $e$ has been replaced with $L_{d(v)+1}$, which consists of $2^{d(v)+1}$ strands.
We connect the left $2^{d(v)}$ strands, which we can view as $L_{d(v)}$, to $(\frac{j}{k+1}-\delta,1) \in \partial D$.
Similarly, we connect the right $2^{d(v)}$ strands to $(\frac{j}{k+1}+\delta,1)$.

The resulting tangle diagram in $D$ is denoted by $\widetilde{\mathcal{T}}$.

\vskip 1em
\noindent \textbf{Step 2.}
Let $R:=[0,l]\times [-\delta,1]$.
Applying the previous step to the trees $\mathcal{T}_1,\ldots,\mathcal{T}_l$, 
we obtain tangle diagrams $\widetilde{\mathcal{T}}_1,\ldots,\widetilde{\mathcal{T}}_l$ in $D$.
We put them in $R$ so that for each $i\in \{ 1,\dots, l\}$,
the $i$th diagram $\widetilde{\mathcal{T}}_i$ sits in the region $[i-1,i] \times [0,1]$.
Connect their bottom ends as follows.
First connect $(\frac{1}{2}-\delta,0)$ and $(l-\frac{1}{2}+\delta,0)$ by using a simple proper arc in $[0,l]\times [-\delta,0]$, 
and then for each $i\in \{ 1,\dots, l-1\}$,
connect $(i-\frac{1}{2}+\delta,0)$ and $(i+\frac{1}{2}-\delta,0)$ by using the horizontal segment between them:
\[
\input{connectedsum.tex}
\]
The resulting tangle diagram in $R$ is denoted by $\sharp_{i=1}^l \widetilde{\mathcal{T}}_i$.

Take an orientation-preserving embedding $\iota\colon R \to \Sigma$ such that $\iota^{-1}(\partial \Sigma) = [0,l] \times \{ -\delta \}$, $\bullet \in \iota(R)$, and $\star \notin \iota(R)$.
Hereafter we identify $R$ and $\iota(R)$.

\vskip 1em
\noindent \textbf{Step 3.}
Let $\operatorname{pr}\colon U = \Sigma \times [-1,+1] \to \Sigma$ be the projection onto the first factor.
For each $(i,j)$
with $1\le i\le l$ and $1\le j\le k$, we now construct 
a simple based loop $z_{ij}$ in $U$ such that the homotopy class of $\operatorname{pr}(z_{ij})$ is $\gamma_{ij}$.

Recall from Step 1 that the $j$th leaf of $\mathcal{T}_i$ has been
mapped to $v_{ij}:= \big(i-1+\frac{j}{k+1},1\big) \in R$.
Set
\[
v'_{ij}:=\big(i-1+\textstyle\frac{j}{k+1}-\delta,1\big),
\quad
v''_{ij}:=\big(i-1+\textstyle\frac{j}{k+1}+\delta,1\big).
\]
Take an immersed path $\gamma'_{ij}$ in $\Sigma \setminus \operatorname{int}(R)$ from $v'_{ij}$ to $v''_{ij}$ which represents~$\gamma_{ij}$.
We arrange that all the intersections and self-intersections of the paths $\{ \gamma'_{ij} \}_{i,j}$ consist of transverse double points.

Resolve the intersections and self-intersections of the paths $\{ \gamma'_{ij} \}_{i,j}$ in an arbitrary way 
to obtain a surface tangle  diagram $Z'$ in $\Sigma \setminus \operatorname{int}(R)$.
The corresponding tangle in  $(\Sigma \setminus \operatorname{int}(R)) \times [-1, +1]$
consists of  components $z'_{ij}$ for all $(i,j) \in \{1,\dots,l\}\times \{1,\dots,k\}$,
where $z'_{ij}$ is a string from $(v'_{ij},0)$ to $(v''_{ij},0)$ such that $\operatorname{pr}(z'_{ij})=\gamma'_{ij}$.

Finally, connect the initial point $(v'_{ij},0)$ of $z'_{ij}$ to $\bullet_-=(\bullet,-1)$ by using the vertical segment $\{ v'_{ij} \} \times [-1,0]$ and the horizontal straight segment from $(v'_{ij},-1)$ to $\bullet_-$.
Similarly connect the terminal point $(v''_{ij},0)$ to $\bullet_-$.
In this way the string 
$z'_{ij}$ determines a simple loop $z_{ij}$ in $U$.
Its homotopy class in $\pi_1(U,\bullet_-)$ is denoted by the same letter $z_{ij}$.

\vskip 1em
\noindent \textbf{Step 4.}
For each $(i,j) \in \{1,\dots,l\}\times \{1,\dots,k\}$,
let $d_{ij}:= d(v_{ij})-1$
where $d(v_{ij})$ is 
the depth of the $j$th leaf of $\mathcal{T}_i$.
We duplicate the part of the diagram $Z'$ corresponding to $z'_{ij}$  by replacing it with the diagram $L_{d_{ij}}$:
hence we obtain a new  surface tangle diagram $Z$ in $\Sigma \setminus \operatorname{int}(R)$.
Join this diagram 
and the tangle diagram $\sharp_{i=1}^l \widetilde{\mathcal{T}}_i$ in $R$ (constructed in Step~2) 
along $[0,l] \times \{1\} \subset \partial R$
in the obvious way. Then we obtain a surface knot diagram $\mathcal{Z}$ in $\Sigma$. 

\vskip 1em
We denote by $L\subset U$ the oriented knot represented by the diagram $\mathcal{Z}$.
By construction, the free homotopy class of $L$ is represented~by
\[
\prod_{i=1}^l \mathcal{T}_i(z_{i1},\ldots,z_{ik}) \in \pi_1(U,\bullet_-)
\]
and $L$ is a knot resolution of $\gamma$.

\begin{example}
If $l=1$ and
$\gamma = [[\gamma_1,\gamma_2],\gamma_3]$, then $\mathcal{Z}$ looks like 
\[
\input{resolutionexample.tex}
\]
\end{example}

\begin{lem}
\label{lem:parallel}
Let $L^{\parallel}$ be the parallel of the knot $L$ determined 
by the diagram~$\mathcal{Z}$ using the ``blackboard framing'' convention.
Then $L^{\parallel}$ is the preferred parallel of $L$ (in the sense of Section \ref{subsec:surgeries}).
\end{lem}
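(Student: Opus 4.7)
Plan: Since $L$ is null-homologous in $U$, its preferred parallel $\lambda_0(L)$ is characterized by the vanishing of its linking number with $L$ in $U$, via intersection with any Seifert surface. For the blackboard-framed pushoff $L^{\parallel}$ coming from a surface knot diagram $\mathcal{Z}$ on $\Sigma\subset \Sigma\times[-1,+1]$, a standard Seifert-surface argument shows that $\operatorname{lk}(L,L^{\parallel}) = w(\mathcal{Z})$, the writhe of $\mathcal{Z}$ (i.e., the signed count of crossings in the surface). Hence the lemma reduces to proving $w(\mathcal{Z})=0$.

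I would split the writhe contributions by the two regions of the diagram, $w(\mathcal{Z}) = w(\sharp_{i=1}^l \widetilde{\mathcal{T}}_i) + w(Z)$, noting that the two regions are joined along $\partial R$ by crossingless horizontal arcs so there is no boundary contribution. Each $\widetilde{\mathcal{T}}_i$ decomposes as a union of elementary Y-tangles $Y_{d(v)}$, one per trivalent vertex $v$ of $\mathcal{T}_i$, connected along the tree edges by crossingless arcs; thus $w(\widetilde{\mathcal{T}}_i) = \sum_{v}w(Y_{d(v)})$. A direct inspection of the defining picture of $Y_1$ shows that its crossings split into equal numbers of positive and negative signs, arising from the symmetric two-clasp structure encoding a group commutator, whence $w(Y_1)=0$; for $d\geq 2$, cabling $Y_1$ by the balanced $(2^{d-1})$-strand braid $L_{d-1}$ multiplies each crossing contribution by $4^{d-1}$ but preserves the sign symmetry, so $w(Y_d)=0$ for every $d\geq 1$. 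Therefore the full contribution from $R$ vanishes.

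The main obstacle is to check that the surface part $Z$ has zero writhe. Although the resolution of the intersections of the paths $\gamma'_{ij}$ in Step~3 is made in an arbitrary way, so that the signs of the resulting crossings are a priori arbitrary, the subsequent cabling by $L_{d_{ij}}$ in Step~4 replaces each transverse double point by a block of $2^{d_{ij}}\cdot 2^{d_{i'j'}}$ crossings with a highly constrained sign pattern. The plan is to argue, via a local analysis near each double point, that the balanced zig-zag structure of the cables $L_d$ (whose own writhe vanishes), combined with the specific matching of cable endpoints at $\partial R$ dictated by the endpoints of the $\widetilde{\mathcal{T}}_i$, forces pairwise cancellation within each such block. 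Summing over all double points of the $\gamma'_{ij}$'s (including self-intersections of a single $z'_{ij}$) would then yield $w(Z)=0$ and therefore $w(\mathcal{Z})=0$, completing the proof. The careful crossing-sign bookkeeping at each double point, together with checking its compatibility with the boundary matching along $\partial R$, is where I would expect to spend the bulk of the effort.
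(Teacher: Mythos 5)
Your proposal follows essentially the same route as the paper: reduce the statement to $\operatorname{Lk}(L,L^{\parallel})=0$, identify this linking number with the signed crossing count (writhe) of the surface diagram $\mathcal{Z}$, and kill the writhe by grouping the crossings into blocks. The one substantive comment is that the step you single out as ``the bulk of the effort'' --- the cancellation for the surface part $Z$ --- is in fact immediate and requires neither careful sign bookkeeping at each double point nor any compatibility check with the matching along $\partial R$. Every crossing of $\mathcal{Z}$, whether it comes from a double point of the paths $\{\gamma'_{ij}\}$ or from a box $Y_{d(v)}$, sits in a block which is a \emph{single} transverse crossing of an $L_d$--cable with an $L_{d'}$--cable for some $d,d'\geq 1$; since $L_d$ consists of $2^{d-1}$ strands in each of the two orientations, the signed count over such a block is $\varepsilon\,(2^{d-1}-2^{d-1})(2^{d'-1}-2^{d'-1})=0$, a purely local computation. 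The only thing to verify for the $Z$--part is that $d_{ij}\geq 1$ for all $(i,j)$, which holds because every leaf of a planar binary rooted tree with $k\geq 2$ leaves has depth $\geq 2$, so $d_{ij}=d(v_{ij})-1\geq 1$; you should make this point explicit, as a $1$--strand self-crossing would contribute $\pm 1$ and break the argument. Finally, your claim that cabling by $L_{d-1}$ ``multiplies each crossing contribution by $4^{d-1}$ but preserves the sign symmetry'' is not correct as stated: precisely because $L_{d-1}$ is balanced, cabling a crossing annihilates its signed contribution rather than rescaling it. This slip does not affect your conclusion that $w(Y_d)=0$, but it shows you are invoking a global cancellation inside $Y_1$ where the local one above already suffices.
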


\begin{proof}
In $U= \Sigma \times [-1,+1]$, there is a notion of ``linking number'': see for instance \cite[Appendix B]{MM_Y3}.
(Actually three different notions of linking numbers ${\rm Lk}, {\rm Lk}_+, {\rm Lk}_-$ exist 
but they coincide for null-homologous knots.)
It easily follows from the definition of linking number that
the preferred parallel of $L$ is the unique parallel that 
has linking number zero with $L$. Therefore, it suffices to show that ${\rm Lk}(L,L^{\parallel}) =0$.

Now according to \cite[Lemma B.2 (1)]{MM_Y3}, 
the linking number can be computed from any surface link diagram.
In our situation, this reads\\
\begin{equation} \label{eq:lk}
{\rm Lk}(L,L^{\parallel})   = 
\sharp  \begin{array}{c}
\labellist
\small\hair 2pt
 \pinlabel {$L^{\parallel}$} [br] at 1 19
 \pinlabel {$L$} [bl] at 19 20
\endlabellist
\includegraphics[scale=1.0]{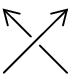}
\end{array}
-  \sharp  \begin{array}{c}
\labellist
\small\hair 2pt
 \pinlabel {$L$} [br] at 1 19
 \pinlabel {$L^{\parallel}$} [bl] at 19 20
\endlabellist
\includegraphics[scale=1.0]{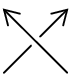}
\end{array}
\end{equation}
where we take as surface link diagram for $(L,L^{\parallel})$ the one 
resulting from the application of the ``blackboard framing'' convention to $\mathcal{Z}$.
Then any intersection of $L$ and $L^{\parallel}$ in that link diagram arises 
from a self-intersection of $L$ in $\mathcal{Z}$,
and all the self-intersections of $L$ can be formed into groups 
where each group has arisen 
either from an intersection/self-intersection 
in the surface tangle diagram $Z'$,
or, from a box in the Y-shaped diagrams that led to
$\sharp_{i=1}^l \widetilde{\mathcal{T}}_i$. 
Thus, with the notation \eqref{eq:cabling}, such a group of self-intersections of $L$ is always of the form
\[
{\unitlength 0.1in%
\begin{picture}(4.2000,4.0000)(0.6000,-8.0000)%
%
\special{pn 8}%
\special{pa 80 800}%
\special{pa 480 400}%
\special{fp}%
%
\special{pn 8}%
\special{pa 80 400}%
\special{pa 260 580}%
\special{fp}%
%
\special{pn 8}%
\special{pa 300 620}%
\special{pa 480 800}%
\special{fp}%
\put(0.6000,-7.2000){\makebox(0,0)[lb]{$d$}}%
\put(4.4000,-7.2000){\makebox(0,0)[lb]{$d'$}}%
\end{picture}}%
\]
for some $d,d'\ge 1$.
Since it contributes trivially to the right-hand side of~\eqref{eq:lk},
we have ${\rm Lk}(L,L^{\parallel}) =0$.
\end{proof}

\subsection{Multiple-meridians}
\label{subsec:mult-mer}

Now let $\varepsilon \in \{-1,+1\}$ and give $L$ the framing number $\varepsilon$.
For  each $(i,j) \in \{1,\ldots,l\} \times \{1,\ldots,k\}$, 
let $e_{ij}$ be the leaf edge of the $j$th leaf of $\mathcal{T}_i$.
In $L$, the edge $e_{ij}$ has been replaced with $L_{d_{ij}+1}$ which branches towards $(v_{ij},0)$:
one branch goes to $(v'_{ij},0)$ and the other to $(v''_{ij},0)$.
Locally both of them look like $L_{d_{ij}}$.
There is a small disk $\Delta$ in $\operatorname{int}(U)$ that intersects the first branch transversely near $(v'_{ij},0)$.
We orient it so that the orientation of $\Delta$ with the orientation of the edge $e_{ij}$
match the orientation of the ambient 3-manifold $U$.
Then the oriented boundary of $\Delta$, which ``clasps'' $2^{d_{ij}}$ strands of $L$ in the first branch,
 gives rise to a simple oriented closed curve in the complement of $L$ in $U$.
Further, connect $\partial \Delta$ to the base point $\star_- = (\star,-1)$ 
by using an arc of the vertical segment $\{ v'_{ij}\} \times [-1,0]$
and the horizontal straight segment from $(v'_{ij},-1)$ to $\star_-$.
Since this closed curve is in the complement of $L$, 
it determines an element $$\mu_{ij}\in \pi_1(U_L)=\pi_1(U_L,\star_-).$$

The rest of this subsection is devoted to the proof of the following proposition,
which is the key of Theorem A. 
Hereafter, we denote by $c_\pm: \Sigma \to U_L$ the boundary parametrizations of the homology cylinder $U_L$.

\begin{prop}
\label{prop:mu_ij}
We have $\mu_{ij} \in \Gamma_{2k-1}\pi_1(U_L)$.
Furthermore,
\[
\mu_{ij} \equiv c_-({\lambda_{ij}}^{-\varepsilon}) \quad \quad \mod \Gamma_{2k+1} \pi_1(U_L)
\]
where $\lambda_{ij}$ has been defined in \eqref{eq:lambda_ij}.
\end{prop}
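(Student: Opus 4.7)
The plan is to establish the proposition by an induction that runs over the edges of each $\mathcal{T}_i$, starting at the root and descending towards the leaves. For every edge $e$ of $\mathcal{T}_i$, I would introduce a multi-meridian $\mu_e \in \pi_1(U_L, \star_-)$ encircling (with consistent sign) the parallel strands of $L$ that lie along the cable associated with $e$, based at $\star_-$ via an arc compatible with those used to define the $\mu_{ij}$. The inductive statement I aim to prove is
\[
\mu_e \equiv c_-\bigl(\Phi_e(\mathcal{T}_i, \vec{\gamma}_i, \gamma)^{-\varepsilon}\bigr)
\pmod{\Gamma_{\ell_e + 2}\, \pi_1(U_L)},
\]
where $\ell_e$ denotes the commutator length of $\Phi_e(\mathcal{T}_i, \vec{\gamma}_i, \gamma)$. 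When $e$ is the leaf edge $e_{ij}$ one has $\ell_e = 2k-1$, and this recovers exactly the desired congruence modulo $\Gamma_{2k+1}\pi_1(U_L)$; the containment $\mu_{ij}\in \Gamma_{2k-1}\pi_1(U_L)$ then follows from $\lambda_{ij}\in \Gamma_{2k-1}\pi$.

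For the base case, when $e$ is the root edge of $\mathcal{T}_i$ we have $\Phi_e = \gamma$, and $\mu_e$ encircles the two strands of the bottom $L_1$-cable of $\widetilde{\mathcal{T}}_i$. By Lemma~\ref{lem:parallel}, the preferred parallel $\lambda_0(L)$ is the blackboard parallel, so $\varepsilon$-framed surgery imposes the relation $\mu(L) = \lambda_0(L)^{-\varepsilon}$ in $\pi_1(U_L)$; since $\lambda_0(L)$ is homotopic to $L$ and $\operatorname{pr}(L)$ represents $\gamma$, carefully tracking the basing arcs (along $\{\star\}\times[-1,0]$ and through the region $R$) identifies $\mu_e$ with $c_-(\gamma^{-\varepsilon})$ modulo $\Gamma_{k+2}\pi_1(U_L)$. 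For the inductive step, consider a trivalent vertex $v$ of $\mathcal{T}_i$ with incoming edge $e_0$ and outgoing edges $e_l,e_r$: the corresponding cables meet in the Y-shaped tangle $Y_{d(v)}$, which realizes a Bing-doubling (equivalently, a tree-clasper) configuration. A local diagrammatic computation in $Y_{d(v)}$ shows that, modulo higher depth, the child multi-meridian $\mu_e$ for $e\in\{e_l,e_r\}$ is the commutator of $\mu_{e_0}$ with a suitably conjugated multi-meridian around the cable of the sibling subtree. Combining this with the inductive hypothesis applied to $\mu_{e_0}$, together with an independent identification of the sibling multi-meridian as $c_-(\xi(e_{\mathrm{sib}})^{-\varepsilon})$ (itself a subsidiary induction on the sibling subtree), produces precisely the recursive formulas defining $\Phi_e$ in Case~1 and Case~2 of Definition~\ref{dfn:xi_j}; the conjugating factors $\overline{t_r}^{t_l}$ or ${t_l}^{t_r}$ appear naturally from the way basing arcs thread through $Y_{d(v)}$ and across adjacent cables of $L$.

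The main obstacle will be the geometric lemma that drives the inductive step: a precise diagrammatic analysis in $Y_{d(v)}$ showing that the parent multi-meridian equals, modulo $\Gamma_{\ell_e+2}\pi_1(U_L)$, the prescribed commutator of child multi-meridians. This is where one must simultaneously control the basing conventions (since the arcs from $\star_-$ cross many strands of the $L_d$-cablings), the identification of the sibling multi-meridian with the appropriate $\xi$-commutator, and the bookkeeping of nilpotent depth (each commutator bracket raises nilpotency in a way that must match the growth of $\ell_e$ under the recursion for $\Phi_e$). Once this local lemma is established, specializing the inductive statement to each leaf edge $e_{ij}$ of $\mathcal{T}_i$ yields the proposition, using that $\Phi_j(\mathcal{T}_i,\vec{\gamma}_i,\gamma) = \lambda_{ij}$ by definition.
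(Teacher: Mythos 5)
Your overall architecture --- a root-to-leaves induction over the edges of each $\mathcal{T}_i$, with base case given by the surgery relation $\mu(L)=\lambda_0(L)^{-\varepsilon}$ together with Lemma~\ref{lem:parallel}, and an inductive step driven by a local commutator identity at each Y-shaped piece --- is the same as the paper's, and your reorganized bookkeeping (substituting $c_-(\gamma)^{-\varepsilon}$ already at the root and propagating a congruence modulo $\Gamma_{\ell_e+2}$, instead of carrying the meridian $\mu_i$ as the third argument of $\Phi_e$ throughout and substituting only at the end) is a workable variant: the depth counts do close up, using that the sibling subtree has at most $k-1$ leaves. The genuine gap is in the local lemma itself. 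The second entry of the commutator expressing $\mu_e$ in terms of $\mu_{e_0}$ is \emph{not} a multi-meridian of the sibling cable, and it is \emph{not} identified with $c_-(\xi(e_{\mathrm{sib}})^{-\varepsilon})$. The correct identity, which is \eqref{eq:handlebody} computed in a genus-two handlebody neighbourhood of the local configuration, reads $\mu_{e_l}=[\mu_{e_0},\overline{y_r}^{\,y_l}]$ (and its mirror in Case~2 of Definition~\ref{dfn:xi_j}), where $y_l,y_r$ are \emph{longitudinal} loops --- the cores of the two handles, running alongside the two sub-cables --- and these satisfy $y_{l},y_r\equiv c_-(\xi(e_{l})),c_-(\xi(e_r))$ modulo the normal closure $\Upsilon_L$ of the non-root multi-meridians, hence modulo $\Gamma_{k+1}\pi_1(U_L)$. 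No exponent $-\varepsilon$ and no surgery relation enter here, because $y_l,y_r$ are not meridians of the surgered knot.

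If one really puts a multi-meridian of the sibling cable in that slot, the step fails quantitatively. Every sibling edge occurring in the induction is a non-root edge, so by the argument of Lemma~\ref{lem:mu(e)} its multi-meridian is a product of an even number of meridians of $L$ with cancelling signs and lies in $\Gamma_{k+1}\pi_1(U_L)$; it therefore cannot be congruent to $c_-(\xi(e_{\mathrm{sib}}))^{\mp 1}$ (which lies only in $\Gamma_s$ with $s\le k-1$) modulo anything useful, and the resulting commutator $[\mu_{e_0},\,\cdot\,]$ would land in $\Gamma_{\ell_{e_0}+k+1}\subset\Gamma_{\ell_e+2}\pi_1(U_L)$, i.e$.$ it would be invisible at the order to which you are working and could not reproduce $\Phi_e$. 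Even granting your reading, the spurious exponent $-\varepsilon$ on the sibling would yield $c_-(t_r^{\,t_l})$ instead of $c_-(\overline{t_r}^{\,t_l})$ when $\varepsilon=-1$, so the recursion of Definition~\ref{dfn:xi_j} would be matched for only one of the two framings. The repair is exactly the paper's: introduce the longitudinal elements $y_l,y_r$, prove $y_{l}\equiv c_-(\xi(e_{l}))$ and $y_r\equiv c_-(\xi(e_r))$ mod $\Gamma_{k+1}\pi_1(U_L)$ via $\Upsilon_L$ and Proposition~\ref{prop:CGO}, and invoke \eqref{eq:handlebody}; with that replacement your induction goes through.
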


Let $V$ be a handlebody of genus two,
and let $T$ be an oriented one-component tangle
in $V$ as shown in the left part of the following figure:
\[
\input{vt.tex}
\hspace{3em}
\input{vtmux.tex}
\]
Take a basepoint $*\in V\setminus T$ and consider the based loops $x_l$, $x_r$, $\mu$, $\mu_l$ 
and~$\mu_r$ as drawn in the right part of the figure above.
Note that $\mu$ is a meridian loop of $T$.
Then, it is easy to see that
\begin{equation}
\label{eq:handlebody}
\mu_l = [\mu,\overline{x_r}^{x_l}], \quad \text{and} \quad
\mu_r = [\mu^{\overline{x_l}},{x_l}^{x_r}] \quad \text{in $\pi_1(V\setminus T,*).$}
\end{equation}

For the moment, fix an index $i\in\{1,\dots,l\}$.
Let $e$ be an edge of $\mathcal{T}_{i}$ and $v:=i(e)$.
In the tangle diagram $\widetilde{\mathcal{T}}_{i}$, the edge $e$ has been replaced with $2^{d(v)+1}$ strands.
Let $\mu(e)$ be an oriented simple closed curve in $U\setminus L$ which ``clasps'' the left half $2^{d(v)}$ strands and connect it to the base point $\star_-$ in the same way as we have defined the curve $\mu_{ij}$.
For instance, if $e$ is the root edge of~$\mathcal{T}_i$, then $\mu_i:=\mu(e)$ is simply a meridian of $L$:
therefore we have
$$
\mu_i \in \Gamma_k \pi_1(U_L)
$$
by Proposition \ref{prop:CGO}. In general, we think of $\mu(e)$ as a ``multiple-meridian''.

\begin{lem}
\label{lem:mu(e)}
If $e$ is not the root edge of $\mathcal{T}_{i}$, then $\mu(e) \in \Gamma_{k+1} \pi_1(U_L)$.
\end{lem}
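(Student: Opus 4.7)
The approach is by induction on the combinatorial distance $d(e)$ from the root edge of $\mathcal{T}_i$ to $e$, with the convention $d(e_{\mathrm{root}})=0$. For the base case, the root edge has been replaced by the two-strand diagram $L_1$, and since $d(v)=0$ at the root $v$ of $\mathcal{T}_i$, the multiple-meridian $\mu_i = \mu(e_{\mathrm{root}})$ clasps only $2^0 = 1$ strand; that is, $\mu_i$ is an ordinary meridian of $L$. Since $L$ is of nilpotency class $\ge k$, Proposition~\ref{prop:CGO}\!~(i) gives $\mu_i \in \Gamma_k \pi_1(U_L)$.

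For the inductive step, let $e$ be a non-root edge with parent edge $e_0$ in $\mathcal{T}_i$, and set $v = i(e)$. In Step~1 of Section~\ref{subsec:akr}, a neighborhood of $v$ has been replaced by the Y-shaped tangle diagram $Y_{d(v)}$. The crucial observation is that the thickening in $U$ of this local piece, minus the strands of $L$ it contains, is homeomorphic to the complement in the genus-two handlebody $V$ of the one-component tangle $T$ pictured above \eqref{eq:handlebody}. Under this homeomorphism, the meridian $\mu$ of $T$ corresponds to $\mu(e_0)$, the two upper meridians $\mu_l$ and $\mu_r$ correspond to the multiple-meridians of the two child edges of $e_0$ at $v$ (one of which is $\mu(e)$), and the handle loops $x_l, x_r$ yield, after being connected to $\star_-$ by an arbitrary arc, well-defined elements of $\pi_1(U_L)$.

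The identities \eqref{eq:handlebody} then imply that, up to conjugation in $\pi_1(U_L)$, $\mu(e)$ is a single commutator of the form $[\mu(e_0)^{\alpha}, \beta]$ for some $\alpha, \beta \in \pi_1(U_L)$. In particular, $\mu(e_0) \in \Gamma_a \pi_1(U_L)$ forces $\mu(e) \in \Gamma_{a+1}\pi_1(U_L)$. Iterating along the unique path from $e_{\mathrm{root}}$ to $e$ in $\mathcal{T}_i$, we conclude that $\mu(e) \in \Gamma_{k+d(e)}\pi_1(U_L)$, and in particular $\mu(e) \in \Gamma_{k+1}\pi_1(U_L)$ as soon as $d(e) \ge 1$.

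The main difficulty lies in the local identification of a neighborhood of each trivalent vertex of $\mathcal{T}_i$ with the handlebody model drawn in the figure, so as to legitimately invoke the commutator identities \eqref{eq:handlebody}. This ultimately reduces to a careful inspection of the combinatorics of the Y-shape $Y_{d(v)}$ together with the cablings $L_{d(v)+1}$ attached to its ends; the matching of the local base point with $\star_-$ via auxiliary arcs is harmless, since conjugation in a group preserves the lower central series filtration.
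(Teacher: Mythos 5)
Your argument is correct in substance, but it takes a much heavier route than the paper does for this particular lemma: in effect you are re-deriving a crude version of Proposition \ref{prop:mu_ij_ind}. The paper's own proof of Lemma \ref{lem:mu(e)} is a one-line signed count: by construction of the cables $L_d$, the curve $\mu(e)$ for a non-root edge clasps $2^{d(v)}\geq 2$ strands of $L$ whose orientations cancel in pairs, so $\mu(e)$ is a product $\prod_j m_j^{\epsilon_j}$ of conjugates of meridians of $L$ with $\sum_j \epsilon_j=0$; since each $m_j$ lies in $\Gamma_k\pi_1(U_L)$ by Proposition \ref{prop:CGO}\! (i) and conjugate elements of $\Gamma_k$ agree modulo $\Gamma_{k+1}$, the class of $\mu(e)$ in $\Gamma_k/\Gamma_{k+1}$ is $\big(\sum_j\epsilon_j\big)\cdot\{m\}_k=0$. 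Your inductive commutator argument via the identities \eqref{eq:handlebody} also works, and there is no circularity: you only need $\mu(e)$ to be a commutator of a conjugate of $\mu(e_0)$ with \emph{some} element, not the precise identification of $y_l,y_r$ with $c_-(\gamma_l),c_-(\gamma_r)$ that Proposition \ref{prop:mu_ij_ind} requires (and which, in the paper, itself invokes the present lemma through \eqref{eq:UpL}). What your route buys is the sharper estimate $\mu(e)\in\Gamma_{k+d(e)}\pi_1(U_L)$ in terms of the distance $d(e)$ to the root edge; what it costs is exactly the topological bookkeeping you defer to a ``careful inspection''. Be aware that your phrase ``is homeomorphic to the complement of $T$ in $V$'' is not literally true at a vertex of depth $\geq 2$, where the strands are cables rather than single components: the correct statement is that $\mu(e_0)$, $\mu(e)$ and the handle loops all lie in the complement of a tubular neighborhood of the core tangle, so the identities \eqref{eq:handlebody} can be pushed forward along the inclusion into $U_L$. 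That is precisely the care the paper takes in the proof of Proposition \ref{prop:mu_ij_ind}; for the weaker statement of this lemma, the signed-count argument avoids it entirely.
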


\begin{proof}
This follows from the facts that $\mu(e)$ is the product of an even number of meridians of $L$ with opposite signs 
and that any meridian of $L$ lies in $\Gamma_k \pi_1(U_L)$ by Proposition \ref{prop:CGO}.
\end{proof}

Let $\Upsilon_L$ be the subgroup of $\pi_1(U_L)$ that
is normally generated by the loops~$\mu(e)$, where $e$ runs through non-root edges of $\mathcal{T}_1,\ldots,\mathcal{T}_l$.
By Lemma~\ref{lem:mu(e)},
\begin{equation}
\label{eq:UpL}
\Upsilon_L \subset \Gamma_{k+1} \pi_1(U_L).
\end{equation}

\begin{prop}
\label{prop:mu_ij_ind}
Assume that $e$ is not the root edge of $\mathcal{T}_i$.
Then
\[
\mu(e) \equiv \Phi_e(\mathcal{T}_{i},c_-(\vec{\gamma}_{i}),\mu_i)
\mod \Gamma_{2k+1} \pi_1(U_L).
\]
\end{prop}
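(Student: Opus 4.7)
My plan is to prove the statement by induction on the depth of $e$ in $\mathcal{T}_i$, meaning the length of the unique directed path from the root edge of $\mathcal{T}_i$ to $e$. I take as base case $e$ equal to the root edge itself: there the proposition is vacuous because $\mu(e) = \mu_i$ by definition of $\mu_i$, and $\Phi_e(\mathcal{T}_i,c_-(\vec{\gamma}_i),\mu_i)= \mu_i$ by the initialization clause of Definition \ref{dfn:xi_j}. This gives the inductive hypothesis at depth~$0$, which is strictly stronger (equality rather than congruence) and will be convenient to have in hand.

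For the inductive step, let $e$ be a non-root edge with parent $e_0$ and set $v:= i(e) = t(e_0)$. Let $e_l,e_r$ be the two edges of $\mathcal{T}_i$ other than $e_0$ incident to $v$, with $e = e_l$ in Case 1 and $e = e_r$ in Case 2. In the diagram $\mathcal{Z}$ constructed in Section \ref{subsec:akr}, a neighborhood of~$v$ has been replaced by the Y-shaped tangle $Y_{d(v)}$, whose strands are cables of~$L$; up to isotopy in the complement of $L$, this local picture sits inside a genus-$2$ handlebody and is identical to the model pair $(V,T)$ depicted in Section \ref{subsec:mult-mer}. Under this identification, $\mu$ corresponds to the multiple-meridian $\mu(e_0)$, while the generators $x_l,x_r\in\pi_1(V\setminus T,*)$ correspond to loops in $\pi_1(U_L)$ obtained by tracing once around the left and right subtree-cables below $v$. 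The local identity \eqref{eq:handlebody} then gives exactly
\[
\mu(e) \ =\ \bigl[\mu(e_0),\ \overline{x_r}^{\,x_l}\bigr] \quad \text{(Case 1),}\qquad  \mu(e)\ =\ \bigl[\mu(e_0)^{\,\overline{x_l}},\ x_l^{\,x_r}\bigr]\quad \text{(Case 2).}
\]

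The next step is to identify the handle-generators $x_l,x_r$, viewed in $\pi_1(U_L)$, with $c_-(\xi(e_l))$ and $c_-(\xi(e_r))$. I would do this by a secondary induction on the size of the subtree hanging below the edge in question: at each internal trivalent vertex the tree-combinatorial definition of $\xi$ matches the commutator produced by the corresponding Y-replacement via the same identity \eqref{eq:handlebody}, and at each leaf edge the corresponding branch of $L$ is, by construction of Step~3 in Section \ref{subsec:akr}, a cable of the loop $z_{ij}$ whose projection is $\gamma_{ij}$, so the handle-generator equals $c_-(\gamma_{ij})$ in $\pi_1(U_L)$. Granted this identification, I substitute the inductive hypothesis $\mu(e_0) \equiv \Phi_{e_0}(\mathcal{T}_i,c_-(\vec{\gamma}_i),\mu_i) \bmod \Gamma_{2k+1}\pi_1(U_L)$ into the boxed formulas above, compare with the recursive clause of Definition \ref{dfn:xi_j}, and conclude. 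The degree bookkeeping is routine: by Proposition \ref{prop:CGO} one has $\mu_i\in\Gamma_k\pi_1(U_L)$, hence $\Phi_{e_0}\in\Gamma_{\geq k}\pi_1(U_L)$, so altering $\mu(e_0)$ within its $\Gamma_{2k+1}$-coset changes the commutator only within $\Gamma_{\geq 2k+1}$; similarly, any discrepancy between the handle-generators and $c_-(\xi(e_{l,r}))$ lies in high enough filtration to be absorbed, using Lemma \ref{lem:petticom} to convert between multiplicative and additive expressions modulo $\hat{I}^{2k+1}$.

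The main obstacle is the secondary identification of $x_l,x_r$ with $c_-(\xi(e_l)),c_-(\xi(e_r))$: it is a geometric/combinatorial assertion that requires carefully tracking how the nested Y-replacements and cablings of Section \ref{subsec:akr} propagate the iterated-commutator structure of $\mathcal{T}_i$ down to the local homotopy type of the handle loops of each intermediate handlebody. Once this identification is established --- essentially as the same proposition applied to subtrees, with $\Gamma_{k+1}$-corrections coming from the multiple-meridian terms~$\Upsilon_L$ of \eqref{eq:UpL} --- the rest of the argument is a direct substitution into \eqref{eq:handlebody}.
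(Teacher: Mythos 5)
Your proposal is correct and follows essentially the same route as the paper: induction on the depth of $e$, the genus-two handlebody model and identity \eqref{eq:handlebody} to produce $\mu(e)$ as a commutator of $\mu(e_0)$ with the handle generators, and commutator calculus for the degree bookkeeping. The one place you over-engineer is the step you flag as the ``main obstacle'': no secondary induction identifying $x_l,x_r$ with $c_-(\xi(e_l)),c_-(\xi(e_r))$ on the nose is needed, since the paper only requires the congruence $y_{l/r}\equiv c_-(\xi(e_{l/r}))$ modulo $\Gamma_{k+1}\pi_1(U_L)$, and this follows in one line from the observation that the difference is a product of conjugates of multiple-meridians, hence lies in $\Upsilon_L\subset\Gamma_{k+1}\pi_1(U_L)$ by \eqref{eq:UpL}; note also that Lemma \ref{lem:petticom} (a statement about the group algebra) plays no role here, the relevant facts being the purely group-theoretic commutator estimates recalled in the proof of Proposition \ref{prop:mu_ij}.
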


\begin{proof}
We use induction on $d:= d(v)$, where $v := i(e)$.
Assume that $d=1$.
Using the notation of Definition \ref{dfn:xi_j}, assume that $e=e_l$ (``Case 1''); 
the case $e=e_r$ (``Case 2'') is similar.
There exists an orientation-preserving embedding $\vartheta \colon V\to U$ 
such that $\vartheta(T)$ is the one-component tangle resulting from the union of the diagram $\widetilde{\mathcal{T}}_i$ 
with the part of $Z$ corresponding to~$z'_{i1},\dots,z'_{ik}$.

Let $y_l := \vartheta(x_l)$ and $y_r := \vartheta(x_r)$:
we regard them as elements in $\pi_1(U_L)$ by connecting them 
to $\star_-$ using the vertical segment from $\vartheta(*)$ 
to $(\operatorname{pr}(\vartheta(*)),-1)$ and the horizontal segment from $(\operatorname{pr}(\vartheta(*)),-1)$ to $\star_-$.
We can apply \eqref{eq:handlebody} to get $\mu(e) = [\mu_i, \overline{y_r}^{y_l}]$.
Let $\gamma_l := \xi(e_l)$ and $\gamma_r := \xi(e_r) \in \pi$.
Note that the difference of $y_l$ and $c_-(\gamma_l)$ lies in the group $\Upsilon_L$ 
and hence \eqref{eq:UpL} implies that $y_l \equiv c_-(\gamma_l) \mod \Gamma_{k+1} \pi_1(U_L)$.
By the same reason, we have $y_r \equiv c_-(\gamma_r) \mod \Gamma_{k+1} \pi_1(U_L)$.
Since $\mu_i \in \Gamma_k \pi_1(U_L)$, 
we conclude that 
$\mu(e)$ is congruent mod $ \Gamma_{2k+1} \pi_1(U_L)$ 
to $[\mu_i, c_-(\overline{\gamma_r}^{\gamma_l})]= \Phi_e(\mathcal{T}_{i},c_-(\vec{\gamma}_{i}),\mu_i)$
as required.

Next suppose that $d \ge 2$.
Again we assume that $e=e_l$, i.e$.$ ``Case 1'' in Definition \ref{dfn:xi_j},
the notation of which we still follow. The other case is similar.
Cut $\mathcal{T}$ at $i(e_0)$ and take the connected component containing $v$.
This component and the corresponding part in the diagram of $L$ look like
\[
\input{chopd.tex}
\]
If we regard the duplicated strand $L_{d-1}$ as a single strand, the diagram above is connected.
There is an orientation-preserving embedding $\vartheta \colon V \to U$ such that $\vartheta(T)$ 
is the diagram above, and it defines elements $y_l,y_r \in \pi_1(U_L)$ in the same way as in the case $d=1$.
Again we have 
the congruences  $y_l \equiv c_-(\gamma_l)$ and $y_r \equiv c_-(\gamma_r)$ mod $\Gamma_{k+1} \pi_1(U_L)$
where $\gamma_l := \xi(e_l)$ and $\gamma_r := \xi(e_r)$.
Applying \eqref{eq:handlebody} to the image $\vartheta(V)$, we obtain $\mu(e) = [\mu(e_0),\overline{y_r}^{y_l}]$.
But, by the inductive assumption, we have 
$\mu(e_0) \equiv \Phi_{e_0}(\mathcal{T}_{i},c_-(\vec{\gamma}_{i}),\mu_i)$ mod $\Gamma_{2k+1} \pi_1(U_L)$.
Hence
\begin{align*}
\mu(e) &\equiv \big[\Phi_{e_0}(\mathcal{T}_{i},c_-(\vec{\gamma}_{i}),\mu_i),\overline{y_r}^{y_l}\big] \mod \Gamma_{2k+1} \pi_1(U_L) \\
&\equiv \big[\Phi_{e_0}(\mathcal{T}_{i},c_-(\vec{\gamma}_{i}),\mu_i),c_-(\overline{\gamma_r}^{\gamma_l})\big] \mod \Gamma_{2k+1} \pi_1(U_L) \\
&= \Phi_e(\mathcal{T}_{i},c_-(\vec{\gamma}_{i}),\mu_i).
\end{align*}
In the last line we have used the defining formula for $\Phi_e(\mathcal{T},\vec{g},h)$ in Definition~\ref{dfn:xi_j}.
This completes the induction and the proof of the proposition.
\end{proof}

Applying Proposition \ref{prop:mu_ij_ind} to the case where $e=e_{ij}$ is 
the leaf edge of the $j$th leaf of $\mathcal{T}_i$, we obtain the following:

\begin{cor}
\label{cor:mu_ij}
$
\mu_{ij} \equiv \Phi_j(\mathcal{T}_i,c_-(\vec{\gamma}_i),\mu_i) \mod \Gamma_{2k+1} \pi_1(U_L).
$
\end{cor}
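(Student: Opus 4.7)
The plan is to observe that Corollary \ref{cor:mu_ij} is essentially a reformulation of the preceding Proposition \ref{prop:mu_ij_ind} specialized to leaf edges. The work has already been done by induction in \ref{prop:mu_ij_ind}; only a notational unwinding remains.

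Concretely, I would proceed as follows. First, I note that since $k\geq 2$, the tree $\mathcal{T}_i$ has $k\geq 2$ leaves, so every leaf edge $e_{ij}$ is distinct from the root edge of $\mathcal{T}_i$, and hence Proposition \ref{prop:mu_ij_ind} is applicable to $e=e_{ij}$. Second, I would identify the topological curve $\mu_{ij}$ from Section \ref{subsec:mult-mer} with the ``multiple-meridian'' $\mu(e_{ij})$ appearing in the preceding proposition. This identification is essentially by construction: the leaf edge $e_{ij}$ has its initial vertex $v=i(e_{ij})$ at depth $d(v)=d_{ij}+1$ in $\mathcal{T}_i$ (in the notation of Step 4 of Section \ref{subsec:akr}), and under the cabling prescription the edge $e_{ij}$ carries $2^{d(v)+1}=2^{d_{ij}+1}$ strands. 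The curve $\mu(e_{ij})$ clasps the left half of these, i.e$.$ $2^{d_{ij}}$ strands lying in the branch going to $(v'_{ij},0)$ --- and this is precisely $\partial \Delta$ in the construction of $\mu_{ij}$.

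Once these two points are in place, I would conclude by writing
\[
\mu_{ij} \ =\ \mu(e_{ij})\ \equiv\ \Phi_{e_{ij}}\big(\mathcal{T}_i,c_-(\vec{\gamma}_i),\mu_i\big)\ =\ \Phi_j\big(\mathcal{T}_i,c_-(\vec{\gamma}_i),\mu_i\big) \mod \Gamma_{2k+1}\pi_1(U_L),
\]
where the middle congruence is Proposition \ref{prop:mu_ij_ind} and the final equality is the definition (see Definition \ref{dfn:xi_j}) of $\Phi_j(\mathcal{T},\vec{g},h)$ as $\Phi_{e_j}(\mathcal{T},\vec{g},h)$ with $e_j$ the leaf edge of the $j$th leaf.

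Since this corollary is a direct substitution into Proposition \ref{prop:mu_ij_ind}, there is no real obstacle; the only point requiring a moment's care is checking the bookkeeping of the cabling depths --- that the disk $\Delta$ defining $\mu_{ij}$ indeed meets exactly those strands which constitute the ``left half'' of the $2^{d(v)+1}$ strands encoded by $e_{ij}$ in the diagram, so that the topological definitions of $\mu_{ij}$ and of $\mu(e_{ij})$ coincide. This is straightforward from the construction in Section \ref{subsec:akr}.
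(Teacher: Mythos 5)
Your proposal is correct and is exactly the paper's argument: the corollary is obtained by applying Proposition \ref{prop:mu_ij_ind} to the leaf edge $e=e_{ij}$ (which is never the root edge since $k\geq 2$) and identifying $\mu_{ij}$ with $\mu(e_{ij})$ by construction. The only blemish is a harmless off-by-one in your aside --- the initial vertex $v=i(e_{ij})$ has depth $d(v)=d_{ij}$ (it is the leaf $v_{ij}=t(e_{ij})$ that has depth $d_{ij}+1$), which is what makes the edge carry $2^{d(v)+1}=2^{d_{ij}+1}$ strands as you state.
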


\begin{proof}[Proof of Proposition \ref{prop:mu_ij}]
Since $\mu_i \in \Gamma_k \pi_1(U_L)$, Corollary \ref{cor:mu_ij}
shows that $\mu_{ij} \in \Gamma_{2k-1} \pi_1(U_L)$.
This proves the first assertion.

Recall the curve $L^{\parallel}$ in Lemma \ref{lem:parallel}.
Since $L^{\parallel}$ is the preferred parallel of~$L$, 
we have $\mu_i = (L^{\parallel})^{-\varepsilon} \in \pi_1(U_L)$, 
where $L^{\parallel} \subset {U \setminus \operatorname{int} T(L)}$ is regarded as a knot in $U_L$ and is suitably based.
 Observe that we have $L^{\parallel} = [y_l,y_r]$, 
  where we use the same notation as in the second paragraph of the proof of Proposition \ref{prop:mu_ij_ind}.
Since $y_l \equiv c_-(\gamma_l)$ and $y_r \equiv c_-(\gamma_r)$ mod $\Gamma_{k+1} \pi_1(U_L)$, 
we have $\mu_i = (L^{\parallel})^{-\varepsilon} \equiv c_-([\gamma_l,\gamma_r])^{-\varepsilon} 
= c_-(\gamma)^{-\varepsilon}$ mod $\Gamma_{k+2} \pi_1(U_L)$. 
Then, Corollary \ref{cor:mu_ij} implies that
\begin{equation}
\label{eq:muij1}
\mu_{ij} \equiv \Phi_j\big(\mathcal{T}_i,c_-(\vec{\gamma}_i),c_-(\gamma)^{-\varepsilon}\big)
\mod \Gamma_{2k+1} \pi_1(U_L)
\end{equation}
since $\Phi_j(\mathcal{T}_i,c_-(\vec{\gamma}_i),\mu_i)$ is a commutator of (a conjugate of) $\mu_i$ and $k-1$ elements of $\pi_1(U_L)$
and  replacing $\mu_i$ with $c_-(\gamma)^{-\varepsilon}$ does not change its class mod $\Gamma_{2k+1} \pi_1(U_L)$.
(To see this, we repeatedly use the following fact about commutator calculus: for any elements $x,x',y$ in a group $G$ such that $x \equiv x' \mod \Gamma_a G$ and $y \in \Gamma_b G$, one has $[x,y] \equiv [x',y] \mod \Gamma_{a+b} G$.)
By using similar arguments, we obtain
\begin{equation}
\label{eq:muij2}
\Phi_j\big(\mathcal{T}_i,c_-(\vec{\gamma}_i),c_-(\gamma)^{-\varepsilon}\big)
\equiv \Phi_j(\mathcal{T}_i,c_-(\vec{\gamma}_i),c_-(\gamma))^{-\varepsilon} \mod \Gamma_{2k+1} \pi_1(U_L).
\end{equation}
(Here we use another fact about commutator calculus:
for any $x \in \Gamma_a G$ and $y\in G$, one has $[x^{-1},y] \equiv [x,y]^{-1} \mod \Gamma_{2a+1} G$.)

By \eqref{eq:muij1} and \eqref{eq:muij2}, we conclude that $\mu_{ij} \equiv  \Phi_j(\mathcal{T}_i,c_-(\vec{\gamma}_i),c_-(\gamma))^{-\varepsilon} = c_-(\lambda_{ij})^{-\varepsilon}$ mod $\Gamma_{2k+1} \pi_1(U_L)$.
This completes the proof.
\end{proof}

\subsection{Proof of Theorem A}

We continue to work with the knot resolution~$L$ of $\gamma$ constructed in Section \ref{subsec:akr}.
The following theorem computes the 
automorphism $\rho_{2k}(U_L)={c_-}^{-1}\circ c_+$ of $\pi/\Gamma_{2k+1}\pi$.

\begin{thm}
\label{thm:HCmonodromy}
Let $x\colon [0,1] \to \Sigma$ be a loop based at $\star$ which does not enter the rectangle $R$ 
and intersects the paths $\{\gamma'_{ij} \}_{i,j}$ in finitely many transverse double points.
Denote these intersection points
by $\{ p_1,\ldots,p_n\}$, so that $x^{-1}(p_1) < x^{-1}(p_2) < \cdots < x^{-1}(p_n)$.
For each $m\in \{1,\ldots,n\}$, there is a unique pair $(i_m,j_m)$ such that $p_m \in x\cap \gamma'_{i_mj_m}$:
let $\varepsilon_m = \varepsilon(p_m; \gamma'_{i_mj_m},x)$.
Then,
\[
\big(\rho_{2k}(U_L)(x)\big)\, x^{-1} \equiv
\Big(  \prod_{m=1}^n \big( {\lambda_{i_mj_m}}^{\varepsilon_m} \big)^{x_{\star p_m} 
\overline{ (\gamma_{i_mj_m})_{\bullet p_m}} \nu}  \Big)^{\varepsilon}  \mod \Gamma_{2k+1} \pi.
\]
Here, $\gamma_{i_mj_m} \in \pi$ is represented by the loop in $\Sigma$ based at $\bullet$ 
that is obtained from $\gamma'_{i_mj_m}$ by joining its endpoints to $\bullet$ by straight segments in $R$. 
\end{thm}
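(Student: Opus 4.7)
The plan is to identify $\rho_{2k}(U_L)(x)\cdot x^{-1}$ with $c_-^{-1}\big(c_+(x)\cdot c_-(x)^{-1}\big)$ modulo $\Gamma_{2k+1}\pi$, and then to compute the right-hand side geometrically in $\pi_1(U_L,\star_-)$ by tracking how $x\times\{+1\}$ interacts with the knot $L$ when pushed down to $x\times\{-1\}$.

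First, I would choose a generic homotopy in $U=\Sigma\times[-1,+1]$ from $c_+(x)$ to $c_-(x)$ whose trace is the vertical cylinder $x\times[-1,+1]$. Since $x$ avoids $R$ and meets the paths $\gamma'_{ij}$ transversely at $p_1,\dots,p_n$, this cylinder meets $L$ transversely: at each $p_m$ we get $2^{d_{i_mj_m}}$ intersection points, one on each parallel duplicated strand of $L_{d_{i_mj_m}}$ passing through $(p_m,0)$. Breaking the homotopy at these points expresses $c_+(x)\cdot c_-(x)^{-1}$ in $\pi_1(U_L,\star_-)$ as an ordered product $\prod_{m=1}^{n} M_m$, where each $M_m$ is a based loop running from $\star_-$ to a small bundle of meridians near $(p_m,0)$ and back. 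Since the duplicated strands of $L_{d_{i_mj_m}}$ are parallel and coherently oriented, the bundle of elementary meridians combines, via free homotopy along the strands, into a single multiple-meridian, so $M_m$ is a based conjugate of $\mu_{i_mj_m}^{\eta_m}$ for some sign $\eta_m\in\{\pm 1\}$, which an orientation chase identifies with $-\varepsilon_m$.

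Second, I would identify the conjugator relating $M_m$ to $\mu_{i_mj_m}^{\eta_m}$ based at $\star_-$. The natural base path underlying $M_m$ runs from $\star_-$ along $c_-(x_{\star p_m})$ to $(p_m,-1)$ and then vertically up to the bundle at height $0$, whereas the canonical base path defining $\mu_{i_mj_m}$ runs from $\star_-$ horizontally to $(v'_{i_mj_m},-1)$ and then vertically. The two base paths differ by the strand of $L_{d_{i_mj_m}}$ from $(v'_{i_mj_m},0)$ to $(p_m,0)$, which pushes down to height $-1$ to become $c_-$ of $(\gamma'_{i_mj_m})_{v'_{i_mj_m}p_m}$ in $\Sigma$; once this is combined with $\nu$ and with the straight segments joining $\bullet$ and $v'_{ij}$ through $R$ that relate $\gamma'_{ij}$ to $\gamma_{ij}$, the discrepancy is exactly $c_-\big(x_{\star p_m}\overline{(\gamma_{i_mj_m})_{\bullet p_m}}\nu\big)$ up to a loop in $R\cup\partial\Sigma$ that is null-homologous by construction. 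The latter ambiguity lies in $c_-(\Gamma_2\pi)$ and, since $\mu_{i_mj_m}\in\Gamma_{2k-1}\pi_1(U_L)$ by Proposition~\ref{prop:mu_ij}, it alters the conjugate only within $\Gamma_{2k+1}\pi_1(U_L)$.

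Third, I would apply Proposition~\ref{prop:mu_ij} to substitute $\mu_{i_mj_m}\equiv c_-(\lambda_{i_mj_m})^{-\varepsilon}$ modulo $\Gamma_{2k+1}\pi_1(U_L)$, producing the internal exponent $-\varepsilon\eta_m=\varepsilon\varepsilon_m$ on $\lambda_{i_mj_m}$; inverting $c_-$ (well defined modulo $\Gamma_{2k+1}$ by Stallings' theorem) then yields the desired product formula. Since all factors lie in $\Gamma_{2k-1}\pi$ and pairwise commute modulo $\Gamma_{4k-3}\pi\subset\Gamma_{2k+1}\pi$ for $k\geq 2$, the order of the product is immaterial and the outer $\varepsilon$-th power in the statement distributes correctly over the product. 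The main obstacle, in my view, is the second step: carefully verifying, from a local model of the surgery, the exact conjugating arc at each intersection; the key point will be that a multiple-meridian is invariant under free homotopy along its bundle of parallel strands, so only the homotopy class of the arc from $\star$ to $p_m$ via $\overline{(\gamma_{i_mj_m})_{\bullet p_m}}$ and $\nu$ matters.
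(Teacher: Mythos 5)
Your proposal is correct and follows essentially the same route as the paper's proof: decompose $c_+(x)\,c_-(x^{-1})$ into conjugated multiple-meridians $\mu_{p_m}^{-\varepsilon_m}$ indexed by the intersection points $p_m$, rebase each at $\star_-$ by sliding along the strand of $L$ lying over $(\gamma'_{i_mj_m})_{v'_{i_mj_m}p_m}$, and then substitute $\mu_{i_mj_m}\equiv c_-(\lambda_{i_mj_m})^{-\varepsilon}$ from Proposition~\ref{prop:mu_ij}. The one imprecision is in your second step: the discrepancy between the two base paths is not a loop in $R\cup\partial\Sigma$ but the loop $w_m$ obtained by closing the strand of $L$ against its projection, which is a product of conjugates of (multiple-)meridians of $L$ arising from the crossings it passes over and hence lies in $\Upsilon_L\subset\Gamma_{k+1}\pi_1(U_L)$ --- in particular it is null-homologous, so the estimate you invoke ($[\Gamma_2,\Gamma_{2k-1}]\subset\Gamma_{2k+1}$) still applies.
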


\begin{proof}
We have
\[
c_- \big( \big(\rho_{2k}(U_L)(x)\big)\, x^{-1} \big) = c_+(x)\, c_-(x^{-1}).
\]
For each $m \in \{1,\dots,n\}$,
take a small disk $\Delta$ in $\operatorname{int}(U)$ which intersects $z'_{ij}$ 
transversely at a point $q_m \in z'_{ij}$ such that $\operatorname{pr}(q_m)=p_m$.
We orient it so that the orientation of $\Delta$ with the orientation of $z'_{ij}$ match the orientation of $U$.
Then the oriented boundary of $\Delta$ defines 
an oriented closed curve in~$U$ 
which clasps the $2^{d_{ij}}$ parallel strands of $L$ corresponding to $z_{ij}$.
Regard $\partial \Delta$ as a loop based at $(p_m,-1)$ by using an arc of the vertical segment $\{ p_m\} \times [-1,+1]$.
We denote this loop by $\mu_{p_m}$. 
Then,
\[
c_+(x)\, c_-(x^{-1})= \prod_{m=1}^n
c_-(x_{\star p_m}) \, \big( {\mu_{p_m}}^{-\varepsilon_m} \big)\,\,
 \overline{c_-(x_{\star p_m})}.
\]
Now let $y_m$ be the piece of  path $\gamma'_{i_mj_m}$ from the point $v'_{i_mj_m}$ to $p_m$,
and let $z'_m$ be a parallel copy 
(using the ``blackboard framing'' convention) of the piece of string $z'_{i_mj_m}$ from $(v'_{i_mj_m},0)$ to $(p_m,0)$.
Connecting $\overline{z'_m}$ and $y_m$ by using two vertical segments between their endpoints, we obtain a loop $w_m$ in~$U_L$.
Then, by a suitable arc-basing of $w_m$, we see that
the loop $\mu_{p_m}$ is conjugate to
$$c_-(\overline{(\gamma_{i_mj_m})_{\bullet p_m}}\, \nu)\, \mu_{i_mj_m}\, c_-(\bar{\nu}\, (\gamma_{i_mj_m})_{\bullet p_m})$$
by $w_m$.
Since $w_m \in \Upsilon_L \subset \Gamma_{k+1} \pi_1(U_L)$ 
and   $\mu_{i_mj_m} \in \Gamma_{2k-1}\pi_1(U_L)$ by Proposition \ref{prop:mu_ij}, we have
\[
c_+(x)\, c_-(x^{-1}) \equiv 
\prod_{m=1}^n c_-\big(x_{\star p_m} \overline{(\gamma_{i_mj_m})_{\bullet p_m}}\, \nu \big) 
\, \big( {\mu_{i_mj_m}}^{-\varepsilon_m} \big)\,
c_-\big(\bar{\nu}\,(\gamma_{i_mj_m})_{\bullet p_m} \overline{x_{\star p_m}}\big)
\]
mod $\Gamma_{2k+1} \pi_1(U_L)$.
Applying Proposition \ref{prop:mu_ij}, we obtain
\[
c_+(x)c_-(x^{-1}) \equiv 
c_- \Big( \prod_{m=1}^n
x_{\star p_m}\overline{(\gamma_{i_mj_m})_{\bullet p_m}}\, \nu\,
\big( {\lambda_{i_mj_m}}^{\varepsilon_m} \big)\,  \bar{\nu}\, (\gamma_{i_mj_m})_{\bullet p_m} \overline{x_{\star p_m}}
\Big)^{\varepsilon}
\]
mod $\Gamma_{2k+1} \pi_1(U_L)$.
This proves the theorem.
\end{proof}

As we recalled at the beginning of Section \ref{subsec:akr}, 
$\rho_{2k}(U_L)$ is independent of the choice of a knot resolution $L$ of $\gamma$.
Comparing Theorem \ref{thm:HCmonodromy} with Theorem \ref{thm:gdtaction}, we find that
\[
\rho_{2k}(U_L)(x) \equiv  (t_{\gamma})^{\varepsilon}  (x) \mod \hpi_{2k+1}
\]
for any $x\in \pi$.
This completes the proof of Theorem A.

\begin{rem}
The reader familiar with the surgery techniques of \cite{Habiro}
should have noticed that the knot resolution~$L$  of $\gamma$ constructed in Section \ref{subsec:akr}
is obtained from the unknot by  surgery along $l$ tree claspers with $k-1$ nodes.
(The ``shape'' of this forest clasper is prescribed by the planar binary rooted trees $\mathcal{T}_1,\dots,\mathcal{T}_l$.)
It is possible to prove Theorem \ref{thm:HCmonodromy} by clasper calculus,
but, to avoid heavy graphical calculus and to make the proof as accessible as possible,
we have preferred to develop here ad hoc arguments.
\end{rem}

\section{Diagrammatic descriptions} \label{sec:diag_descriptions}

In this section, we give a ``diagrammatic'' version of Theorem A and its companion formulas.

\subsection{Diagrammatic versions of the Dehn--Nielsen representations}

We review some diagrammatic versions of the two generalizations 
of the Dehn--Nielsen representations, namely \eqref{eq:DN1} and \eqref{eq:DN2}, for generalized Dehn twists and homology cobordisms, respectively.

We start with homology cobordisms following \cite{Ma,HM}.
It is easily verified that the monoid homomorphism
$
\hat\rho\colon \calC \to \Aut(\hpi)
$
takes values in the subgroup $\Aut_\zeta(\hpi)$ consisting of automorphisms that fix 
$$\zeta:=[\partial \Sigma]\in \pi \subset \hpi.$$
Let  $\frakM(\pi)$ be the \emph{Malcev Lie algebra} of $\pi$, which is the primitive part
of the complete Hopf algebra $\hat{A} = \varprojlim_{k} \Q[\pi]/I^k$.
Then, instead of $\hat\rho$, one can equivalently consider the map
$$
\varrho\colon \calC \longrightarrow \Aut_{\log \zeta}\big(\frakM(\pi)\big), \ C \longmapsto \log \circ\, \hat\rho(C) \circ \exp
$$
where $\log\colon 1+  \hat{I}  \to  \hat{I}$ and $\exp\colon  \hat{I} \to 1+  \hat{I}$ 
are defined by the usual formal power series,
and $\Aut_{\log\zeta}\big(\frakM(\pi)\big)$ denotes the group of filtration-preserving automorphisms of $\frakM(\pi)$ 
that fix the logarithm of $\zeta$. Furthermore,
if one restricts oneself to the submonoid $\calI\calC=\calC[1]$ of homology cylinders, then $\varrho$ takes values in the subgroup
$\IAut_{\log \zeta}\big(\frakM(\pi)\big)$ of such automorphisms that induce the identity on the associated graded.

To make this map $\varrho$ more concrete, one considers a \emph{symplectic expansion} of $\pi$.
This means a multiplicative map
$$
\theta\colon \pi \longrightarrow \hat T(H^\Q)
$$
with values in the degree-completion of the tensor algebra of $H^\Q$, which satisfies $\theta(\zeta)=\exp(-\omega)$  and 
$$
\forall x \in \pi, \quad \theta(x) = \underbrace{1 + \{x\}_1 + (\deg\geq 2)}_{\hbox{\small group-like}}.
$$
That symplectic expansions $\theta$ do exist is readily verified \cite[Lemma 2.16]{Ma};
 see also \cite{Ku12} for an explicit combinatorial construction. 
We \emph{choose} one.
Note that for any $k\ge 1$, the restriction of $\theta$ to $\Gamma_k \pi$ is independent of $\theta$: indeed we have
\begin{equation} \label{eq:thetagam_k}
\forall x\in \Gamma_k\pi, \quad \theta(x) = 1+ \{ x\}_k + (\deg\geq k+1),
\end{equation}
where we view $\{ x\}_k \in \Gamma_k \pi/\Gamma_{k+1}\pi$
as an element of $\hat T_k(H^\Q)=(H^\Q)^{\otimes k}$ through the canonical embedding $\Gamma_k \pi/\Gamma_{k+1}\pi \cong \frakL_k \subset \frakL_k^\Q \subset \hat T_k(H^\Q)$.
See e.g. \cite[\S 3]{Kaw05} for this fact.

Next, the map $\theta$ can be extended by linearity and continuity to an isomorphism 
$\theta\colon \hat A \to\hat T(H^\Q)$ of complete Hopf algebras which, in turn, 
can be restricted to an isomorphism $\theta\colon \frakM(\pi) \to\hat \frakL^\Q$ of complete Lie algebras. 
Then, instead of $\varrho$, one can equivalently consider the map
$$
\varrho^\theta\colon \calC \longrightarrow \Aut_{\omega}\big(\hat\frakL^\Q\big), \ C \longmapsto \theta \circ\, \varrho(C) \circ \theta^{-1}.
$$
As in Section \ref{subsec:Jacobi}, let $\calT^\Q$ be the graded vector space of Jacobi diagrams,
its degree-completion being denoted by $\hat\calT^\Q$.
Hence, we can consider the composition
$$
\xymatrix{
 \calC[1] \ar[r]^-{\varrho^\theta}  \ar@/_2pc/@{-->}[rrr]^-{r^\theta}  
& \IAut_{\omega}\big(\hat\frakL^\Q\big) \ar[r]^-\log_-\cong  
& \hat \frakH^\Q \ar[r]^-{\eta^{-1}}_-\cong  & \hat \calT^\Q.
}
$$
By construction, for every $k\geq 2$,
 the truncation of $r^\theta$ to the degrees $1,\dots,k-1$ is tantamount 
 to the  homomorphism $\rho_{k}\colon \calC[1] \to \Aut(\pi/\Gamma_{k+1}\pi)$.

The case of generalized Dehn twists goes parallel to the case of homology cobordisms.
Thus, for any symplectic expansion $\theta$, we have an injective map
$$
r^\theta\colon \calW[1] \longrightarrow  \hat \calT^\Q , 
\ f \longmapsto  \eta^{-1}\log\big(\theta \circ (\log \circ f \circ \exp) \circ \theta^{-1}\big).
$$
For every  $k\geq 2$, the truncation of $r^\theta$ to the degrees $1,\dots,k-1$ is tantamount 
 to the  homomorphism  $\calW[1] \hookrightarrow \Aut(\hpi) \to \Aut(\hpi / \hpi_{k+1})$.

In both situations (homology cobordisms and generalized Dehn twists), 
the diagrammatic Dehn--Nielsen representation $r^\theta$ is equivalent to the usual one valued in $\Aut(\hpi)$.
But, in contrast with $\Aut(\hpi)$, the target $\hat\calT^\Q$ of $r^\theta$ is well understood \cite{GL,HP}: 
it has a unique Lie bracket that makes $\eta$ an isomorphism of complete Lie algebras, 
and its dimension can be easily derived in each degree from Witt's dimension formula for the free Lie algebra~$\frakL^\Q$.
Note that the Lie algebra $\hat\calT^\Q$ can also be viewed as a group, using the Baker--Campbell--Hausdorff (BCH) formula.

Here is an informal ``comparison table'' between the two versions of the Dehn--Nielsen representation:\\ 

{\small
\begin{tabular}{c|c|c}
     & usual Dehn--Nielsen & diagrammatic Dehn--Nielsen \\ \hline
  target & the group $\Aut(\hpi)$& the  Lie algebra $\hat \calT^\Q$ \\
  & \dots\ difficult to study & \dots \ with easy combinatorics \\ \hline
  multiplicative? &  YES & YES \\
  & & \dots\ if $\hat \calT^\Q$ has the BCH product\\ \hline
  intrinsically-defined ?& YES & NO \\
  & & \dots\ it depends on  $\theta$ \\ \hline
\end{tabular}}\\

\noindent

\subsection{A reformulation of Theorem A}

We now reformulate Theorem A
using the diagrammatic versions of the Dehn--Nielsen representation.

\begin{thm} \label{thm:trees}
Let $k\geq 2$ be an integer, let $\gamma \subset \Sigma$  be a closed curve of nilpotency class $\geq k$
and let $L\subset U$ be a knot resolution  of $\gamma$   with framing $\varepsilon \in \{-1,+1\}$.
Then, for any symplectic expansion $\theta$ of $\pi$, we have
\begin{equation} \label{eq:trees}
r^\theta(U_L) \equiv \varepsilon \cdot r^\theta(t_\gamma)
\equiv \frac{\varepsilon  }{2} \cdot\log \theta ([\gamma]) \hbox{\textbf{\,-\! -\! -}\,}  \log \theta ([\gamma]) 
\mod {\hat\calT}^\Q_{\geq 2k}.
\end{equation}
\end{thm}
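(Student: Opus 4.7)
The plan is to prove the two congruences of \eqref{eq:trees} in turn. The first reduces to Theorem~A together with the definition of~$r^\theta$; the second requires an explicit diagrammatic computation of $r^\theta(t_\gamma)$.

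For the first congruence, recall that for any $f\in\calC[1]$ or $f\in\calW[1]$, the truncation of $r^\theta(f)$ modulo $\hat\calT^\Q_{\geq 2k}$ (i.e$.$ the homogeneous components of degrees $1$ through $2k-1$) is tantamount to the action of $f$ on $\hpi/\hpi_{2k+1}$, by the diagrammatic description of~$r^\theta$ given above. By Theorem~A, $U_L$ and $(t_\gamma)^\varepsilon$ induce the same automorphism of $\hpi/\hpi_{2k+1}$, hence $r^\theta(U_L) \equiv r^\theta((t_\gamma)^\varepsilon) \bmod \hat\calT^\Q_{\geq 2k}$. Since $(t_\gamma)^\varepsilon = \exp(\varepsilon D_\gamma)$ as an algebra automorphism of $\hat A$, the definition of $r^\theta$ combined with the $\Q$-linearity of $\eta^{-1}$ gives $r^\theta((t_\gamma)^\varepsilon) = \varepsilon \cdot r^\theta(t_\gamma)$, as required.

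For the second congruence, I would identify the derivation $\theta \circ D_\gamma \circ \theta^{-1}$ of $\hat\frakL^\Q$ modulo the derivations that raise degrees by at least~$2k$. Set $Y := \log\theta(\gamma) \in \hat\frakL^\Q_{\geq k}$; by \eqref{eq:thetagam_k} its leading term in degree~$k$ is $\{\gamma\}_k$. Since $\theta$ is compatible with the map $|\cdot|$ to cyclic words, transporting $L(\gamma) = |\tfrac{1}{2}(\log\gamma)^2|$ through~$\theta$ yields the cyclic class of $\tfrac{1}{2}\,Y^2$. It then suffices to prove
\[
\eta^{-1}\bigl(\theta \circ \sigma\bigl(|\tfrac{1}{2}Y^2|\bigr) \circ \theta^{-1}\bigr) \equiv \tfrac{1}{2}\, Y \hbox{\textbf{\,-\! -\! -}\,} Y \mod \hat\calT^\Q_{\geq 2k}.
\]

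This last identification is the heart of the proof. By Proposition~\ref{prop:tgamx} together with the relation \eqref{eq:kappasigma}, the derivation $D_\gamma$ acts on $x\in\pi$ by
\[
D_\gamma(x) \equiv \kappa'(\gamma,x)\,(\gamma-1)\,\kappa''(\gamma,x) \mod \hat I^{3k-1},
\]
where $3k-1 \geq 2k+1$ for $k\geq 2$. Transporting through~$\theta$, each of the two factors of $\gamma$ unfolds to a copy of $Y$, and the refined intersection pairing $\kappa$ corresponds to attaching these two copies to the input at a trivalent vertex, the attachment being dictated by the bivector dual to the intersection form. This reconstructs, term by term, the evaluation of the symplectic derivation $\eta(\tfrac{1}{2}\, Y \hbox{\textbf{\,-\! -\! -}\,} Y)$, with the coefficient~$\tfrac{1}{2}$ absorbing the $\Z/2$-symmetry of the diagram. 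The main obstacle is bookkeeping the degrees: one must check that the higher-order corrections in $Y$ beyond degree~$k$, and in $D_\gamma$ beyond its principal part, all contribute only terms in $\hat\calT^\Q_{\geq 2k}$. This is ensured by the degree-shifting properties of $\widetilde\kappa$ recalled in Section~\ref{subsec:intop} together with the rigidity of $Y$ modulo degree $k+1$ given by \eqref{eq:thetagam_k}.
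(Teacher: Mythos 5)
Your overall strategy is the same as the paper's. The first congruence is handled identically: Theorem A gives that $U_L$ and $(t_\gamma)^\varepsilon$ agree on $\hpi/\hpi_{2k+1}$, which by the degree-truncation property of $r^\theta$ gives $r^\theta(U_L)\equiv r^\theta((t_\gamma)^\varepsilon) \bmod \hat\calT^\Q_{\geq 2k}$, and $r^\theta((t_\gamma)^\varepsilon)=\varepsilon\, r^\theta(t_\gamma)$ since $(t_\gamma)^\varepsilon=\exp(\varepsilon D_\gamma)$. For the second congruence you also follow the paper's route: transport $D_\gamma=\sigma\big(\big|\tfrac12(\log\gamma)^2\big|\big)$ through $\theta$ and identify the resulting derivation of $\hat\frakL^\Q$ with $\eta\big(\tfrac12\, Y \hbox{\textbf{\,-\! -\! -}\,} Y\big)$, where $Y=\log\theta([\gamma])$.

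The step you label ``the heart of the proof'' is, however, only gestured at, and it is exactly the point where a real input is needed. The claim that, after transport by $\theta$, the pairing $\kappa$ (equivalently $\sigma$) ``corresponds to attaching the two copies of $Y$ at a trivalent vertex, dictated by the bivector dual to the intersection form'' is not a formal unwinding of the definition of $\kappa$: it is the statement that a \emph{symplectic} expansion intertwines $\sigma\colon \hat{\mathfrak{g}}\to {\rm Der}_\zeta(\hat A)$ with the explicit tensorial action $S$ of cyclic words on $\hat T(H^\Q)$ given by contraction with $\omega$ (this is \cite[Theorem 1.2.2]{KK14}, and it fails for a general group-like expansion). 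You neither prove nor cite this, so the ``term by term'' reconstruction is not established. Once that intertwining is invoked, the remaining ingredient is the purely algebraic identity $S(|uv|)|_{\frakL^\Q}=\eta(u\hbox{\textbf{\,-\! -\! -}\,}v)$ for $u,v\in\frakL^\Q$, which reduces via $|[a,b]c|=|a[b,c]|$ to the case $\deg u=1$. Note also that this route yields the \emph{exact} equality $r^\theta(t_\gamma)=\tfrac12\,\log\theta([\gamma])\hbox{\textbf{\,-\! -\! -}\,}\log\theta([\gamma])$ in $\hat\calT^\Q$, so the degree bookkeeping you identify as the ``main obstacle'' (controlling the corrections to $D_\gamma$ beyond its principal part and to $Y$ beyond degree $k$) is unnecessary; the truncation mod $\hat\calT^\Q_{\geq 2k}$ is only needed for the first congruence.
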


Before proving this theorem, we make a few comments about its statement.
Since $U\in \calC[2k-2]$, we have $U_L \in \calC[2k-2]$ by Lemma \ref{lem:surgery}\! (1).
Similarly, we have $t_\gamma\in \calW[2k-2]$ by  Corollary~\ref{cor:tgamx}\! (2).
Therefore both $r^\theta(U_L)$ and $r^\theta(t_\gamma)$ start in degree $2k-2$.
Besides, since $\gamma$ has nilpotency class $\geq k$, we have
$$
\theta([\gamma])  \equiv_{k+2}  1 + \theta_{k}([\gamma]) + \theta_{k+1}([\gamma]) \ \in \hat T (H^\Q)
$$
where $\theta_k([\gamma])=\{ [\gamma]\}_k \in (H^\Q)^{\otimes k}$ by \eqref{eq:thetagam_k} 
and $\theta_{k+1}([\gamma]) \in (H^\Q)^{\otimes (k+1)}$; 
hence, using that $k\geq 2$, we get 
$$
\log \theta([\gamma])  \equiv_{k+2}  \theta_{k}([\gamma])  + \theta_{k+1}([\gamma]) \ \in  \hat \frakL^\Q.
$$
Consequently the series of Jacobi diagrams $\log \theta ([\gamma]) \hbox{\textbf{\,-\! -\! -}\,}   \log \theta ([\gamma])$, 
which is obtained by gluing ``root-to-root'' two copies of the series of planar binary rooted trees $\log \theta ([\gamma])$, satisfies
$$
\log \theta ([\gamma]) \hbox{\textbf{\,-\! -\! -}\,}   \log \theta ([\gamma])
\equiv_{2k}
\underbrace{\theta_{k}([\gamma]) \hbox{\textbf{\,-\! -\! -}\,}  \theta_{k}([\gamma])}_{\in  \calT^\Q_{2k-2}}
+ \underbrace{2\cdot  \theta_{k}([\gamma]) \hbox{\textbf{\,-\! -\! -}\,}  \theta_{k+1}([\gamma])}_{\in \calT^\Q_{2k-1}}.
$$
Thus the identity \eqref{eq:trees} in Theorem \ref{thm:trees} is equivalent to the following system:
\begin{equation} \label{eq:trees_bis}
\left\{\begin{array}{l}
r^\theta_{2k-2}(U_L) = \varepsilon \cdot r^\theta_{2k-2}(t_\gamma) 
= \frac{\varepsilon}{2} \cdot \theta_{k}([\gamma]) \hbox{\textbf{\,-\! -\! -}\,}  \theta_{k}([\gamma])\\
r^\theta_{2k-1}(U_L) = \varepsilon \cdot r^\theta_{2k-1}(t_\gamma) = \varepsilon \cdot \theta_{k}([\gamma]) \hbox{\textbf{\,-\! -\! -}\,}  \theta_{k+1}([\gamma])
\end{array}\right.
\end{equation}
In particular, we can deduce from this reformulation of \eqref{eq:trees} 
that its right-hand side is independent of the choices of orientation and  arc-basing  of the closed curve~$\gamma$, implicit in the statement of Theorem \ref{thm:trees} to consider $[\gamma] \in \pi$.

\begin{proof}[Proof of Theorem \ref{thm:trees}]
Of course, the fact that $r^\theta(U_L) \equiv_{2k} \varepsilon \cdot r^\theta(t_\gamma)$ follows directly from Theorem A. Therefore, it is enough to prove that
\begin{equation} \label{eq:r(gdt)}
r^\theta(t_\gamma) = \frac{1}{2} \cdot\log \theta ([\gamma]) \hbox{\textbf{\,-\! -\! -}\,}   \log \theta ([\gamma]) \ \in {\hat\calT}^\Q_{}.
\end{equation}
Recall from Section \ref{subsec:gdt} that $t_\gamma \in \Aut(\hpi)$ is 
the restriction of an automorphism  of the complete Hopf algebra $\hat A$, 
namely the exponential of the derivation 
$$
 D_{\gamma} = \sigma(L(\gamma)) 
\qquad \hbox{where} \
L(\gamma) = \left| \frac{1}{2} (\log [\gamma])^2 \right| \in \hat{\mathfrak{g}}.
$$ 
Therefore
\begin{eqnarray*}
r^\theta(t_\gamma) &=& \eta^{-1} \log \big( \theta \circ \exp( D_\gamma ) \big\vert_{\frakM(\pi)} \circ \theta^{-1} \big) \\
&=& \eta^{-1}  \big( \theta \circ \sigma(L(\gamma))\big\vert_{\frakM(\pi)} \circ \theta^{-1} \big) \\
&=& \eta^{-1} \Big( \big( \theta \circ \sigma(L(\gamma))  \circ \theta^{-1} \big)\big\vert_{\hat\frakL^\Q} \Big).
\end{eqnarray*}
Let $C(H^\Q)$ be the quotient of $T(H^\Q)$ by its subspace of commutators,
let $\vert -\vert \colon T(H^\Q)\to C(H^\Q)$ be the associated projection, 
and denote by $\hat C(H^\Q)$ the  degree completion. Since the vector space $\mathfrak{g}$
can be identified to the quotient of $A$ by its subspace of commutators,
 $\theta$ induces an isomorphism between $\hat{\mathfrak{g}}$ and  $\hat C(H^\Q)$.
According to \cite[Theorem 1.2.2]{KK14}
(see also \cite[~\S 10]{MT13}), there is a commutative diagram
$$
\xymatrix{
\hat{\mathfrak{g}} \times \hat A \ar[rr]^-\sigma  \ar[d]^-\cong _-{\theta \times \theta} && \hat A \ar[d]_-\cong ^-{\theta} \\
 \hat C(H^\Q)  \times \hat T(H^\Q) \ar[rr]_-{S} && \hat T(H^\Q)
}
$$ 
where the action $S$ of $\hat C(H^\Q)$  on $\hat T(H^\Q)$ by derivations is defined as follows:
any $c=\vert h_1\otimes \cdots \otimes h_m\vert \in C_m(H^\Q)$,  where $h_1,\ldots,h_m \in H^\Q$,  is mapped by $S$
to  the unique derivation of $\hat T(H^\Q)$ whose restriction to $H^\Q$ is
$$
\sum_{i=1}^m \omega(h_i,-) \cdot h_{i+1}\otimes \cdots \otimes h_m \otimes h_1 \otimes \cdots \otimes h_{i-1} 
\in \Hom\big(H^\Q, \hat T (H^\Q)\big).
$$
We deduce that 
$$
r^\theta(t_\gamma) = \eta^{-1} \Big( S\Big(\Big\vert \frac{1}{2} (\log \theta[\gamma])^2 \Big\vert \Big)\Big\vert_{\hat\frakL^\Q}\Big).
$$
 Finally, we obtain \eqref{eq:r(gdt)} by using the following formula:
\[
S(|uv|)|_{\frakL^{\Q}} = \eta( u \hbox{\textbf{\,-\! -\! -}\,} v)
\quad \text{for any $u,v \in \frakL^{\Q}$.}
\]
In fact, since $\big|[a,b]c\big|=\big|a[b,c]\big|$ for any $a,b,c \in T(H^{\Q})$, 
the proof of this formula is reduced to the case where $u$ is homogeneous of degree $1$, which can be seen from the definition of $\eta$ using Lemma 2.7.1 in \cite{KK14}.
\end{proof}

\begin{rem} \label{rem:truncation}
Theorem \ref{thm:trees} only involves the restriction of $r^\theta$ to $\calC[k]$ and its truncation
to the degrees $k,k+1,\dots, 2k-1$, which we denote by
$$
r^\theta_{[k,2k[} \colon \calC[k] \longrightarrow \bigoplus_{j=k}^{2k-1} \calT_j^\Q.
$$
Note that $r^\theta_{[k,2k[}$ is equivalent to $\rho_{2k}\colon \calC[k] \to \Aut(\pi/\Gamma_{2k+1}\pi)$
and, consequently, to the $k$th Morita homomorphism $M_k$.
By the BCH formula, it is a group homomorphism if $\bigoplus_{j=k}^{2k-1} \calT_j^\Q$ is merely viewed as an additive group.
\end{rem}

\section{Applications to Johnson homomorphisms} \label{sec:applications}

In this section, we prove Theorem B and Theorem C.

\subsection{Johnson homomorphisms}

Let $j\geq 1$ be an integer. Recall from \cite{GL} that the \emph{$j$th Johnson homomorphism}
$$
\tau_j\colon \calC[j] \longrightarrow \Hom\big(H, \Gamma_{j+1} \pi/\Gamma_{j+2}\pi \big) 
$$
assigns to any homology cobordism $C \in \calC[j]$ the homomorphism defined by
$$
\{x\}_1 \longmapsto  \rho_{j+1}( C)(x) \cdot x^{-1} \in \pi/\Gamma_{j+2} \pi
$$
for all $x\in \pi$. It can be checked that $\tau_j$ is well-defined and is a monoid homomorphism.
By  identifying in the canonical way $\Gamma_{j+1} \pi/\Gamma_{j+2}\pi$ with~$\frakL_{j+1}$,
we identify the target of $\tau_j$  to the group of derivations of $\frakL$ increasing degrees by $j$. 
Then, using  that $\rho_{j+1}(C)$ fixes $\zeta=[\partial \Sigma]$ modulo $\Gamma_{j+2}\pi$, 
it can be verified that 
$\tau_j$  takes values in the subgroup of symplectic derivations:
$$
\tau_j\colon \calC[j] \longrightarrow  \frakH_j .
$$
Note also  that, by identifying $H$ with 
$\Hom(H,\Z)$ via $h\mapsto \omega(h,-)$, 
we get an isomorphism between $\Hom(H, \frakL_{j+1})$ and $H \otimes \frakL_{j+1}$,
through which $\frakH_j$ corresponds to the kernel of the Lie bracket
$H \otimes \frakL_{j+1} \to \frakL_{j+2}$.

For generalized Dehn twists, one can similarly define a group homomorphism
$$
\tau_j\colon \calW[j] \longrightarrow  \frakH_j^\Q.
$$
However, note that the Lie ring $\frakH$ is replaced by its rational version $\frakH^\Q$
since we consider here the homomorphism $\calW \hookrightarrow \Aut(\hpi) \to \Aut(\hpi / \hpi_{j+2})$
and the quotient $\hpi_{j+1}/\hpi_{j+2}$ is isomorphic to $\frakL_{j+1}^\Q$.

In both situations (homology cobordisms and generalized Dehn twists), 
all the Johnson homomorphisms are determined by the diagrammatic Dehn--Nielsen representation.
Specifically, for any symplectic expansion $\theta$ of $\pi$ and for every $j\geq 1$, 
the following diagrams are commutative:

\begin{equation} \label{eq:hcob_rt}
\xymatrix{
\calC[1] \ar[r]^-{r^\theta}  & \hat \calT^\Q \ar@{->>}[d]^-{\operatorname{proj}}\\
\calC[j] \ar@{^{(}->}[u] \ar[r]_-{\eta^{-1} \tau_j} &\calT_j^\Q 
} 
\qquad \hbox{and} \qquad
\xymatrix{
\calW[1] \ar[r]^-{r^\theta}  & \hat \calT^\Q \ar@{->>}[d]^-{\operatorname{proj}}\\
\calW[j] \ar@{^{(}->}[u] \ar[r]_-{\eta^{-1} \tau_j} &\calT_j^\Q 
} 
\end{equation}

\subsection{Proof of Theorem B and generalizations}

Let $k\geq 2$ be an integer.

\begin{proof}[Proof of Theorem B]
Since $U\in \calC[2k-2]$, we have $U_L \in \calC[2k-2]$ by Lemma~\ref{lem:surgery}\! (1).
Similarly, we have $t_\gamma\in \calW[2k-2]$ by Corollary~\ref{cor:tgamx}\! (2).
Next, we choose a symplectic expansion $\theta$ and compute
$$
\eta^{-1}\tau_{2k-2}(U_L)\stackrel{\eqref{eq:hcob_rt}}{=}  r_{2k-2}^\theta(U_L)
  \stackrel{\eqref{eq:trees_bis}}{=}   
  \frac{\varepsilon}{2} \cdot \theta_{k}([\gamma]) \hbox{\textbf{\,-\! -\! -}\,}  \theta_{k}([\gamma]). 
$$
Along the same lines, we obtain 
$\eta^{-1}\tau_{2k-2}(t_\gamma)=  \frac{1}{2} \cdot \theta_{k}([\gamma]) \hbox{\textbf{\,-\! -\! -}\,}  \theta_{k}([\gamma])$.
 We conclude since $\theta_k([\gamma])=\{[\gamma]\}_k$ by \eqref{eq:thetagam_k}.
\end{proof}

We now give a generalization of Theorem B. To state this, we firstly observe the following:
for any homology cylinder $C$, there is a canonical isomorphism 
of graded Lie rings
\begin{equation}\label{eq:canonical_iso}
\frakL \stackrel{\cong }{\longrightarrow} \bigoplus_{k\geq 1}\frac{\Gamma_k \pi_1(C)}{\Gamma_{k+1}\pi_1(C)}
\end{equation}
which is defined as the diagonal of the commutative diagram
$$
\xymatrix{
\frakL\ar[d]^-\cong _-{c_\pm} \ar[r]^-\cong  & \bigoplus_{k\geq 1} \frac{\Gamma_k  \pi}{\Gamma_{k+1} \pi}\ar[d]^-{c_\pm}_-\cong  \\
\frakL(H_1(C)) \ar[r] & \bigoplus_{k\geq 1}\frac{\Gamma_k \pi_1(C)}{\Gamma_{k+1}\pi_1(C)}
} 
$$
Here $\frakL(H_1(C))$ denotes the Lie ring freely generated by $H_1(C)$,
the vertical maps are induced by the boundary parametrizations $c_\pm \colon \Sigma \to C$, 
and the horizontal maps are the canonical homomorphisms of graded Lie rings. 
We can equally take $c_+$ or $c_-$ in that diagram since $c_+=c_-\colon H\to H_1(C)$.

\begin{thm} \label{thm:generalized_B}
Let $k\geq 2$ be an integer and let $C\in \calC[2k-2]$.
For any knot $L \subset C$ with framing $\varepsilon \in \{-1,+1\}$ of nilpotency class $\geq k$, 
we have $C_L \in \calC[2k-2]$ and 
\begin{equation} \label{eq:even_bis}
\tau_{2k-2}(C_L) = \tau_{2k-2}(C) + \frac{\varepsilon}{2} \cdot \eta( \Lambda \hbox{\textbf{\,-\! -\! -}\,} \Lambda)
\end{equation}
where $\Lambda \in \Gamma_k \pi_1(C)/ \Gamma_{k+1}\pi_1(C)$  denotes the class of $[L]$ 
and is regarded as an element of  $\frakL_k$  via the canonical isomorphism \eqref{eq:canonical_iso}.
\end{thm}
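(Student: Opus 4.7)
The plan is to reduce Theorem \ref{thm:generalized_B} to Theorem~B by replacing the knot $L$ with a comparison knot $L_0$ lying in a small collar of $\partial_+C$ and then showing that this replacement does not affect $\tau_{2k-2}$. The latter step is the main obstacle, where clasper techniques enter.

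First, since $C\in \calC[2k-2]\subset \calC[k]$ and $L$ is of nilpotency class~$\geq k$, Lemma~\ref{lem:surgery}\!(1) gives $C_L\in \calC[2k-2]$ immediately. Next, by Stallings' theorem the parametrization $c_+\colon \pi\to \pi_1(C)$ induces an isomorphism $\pi/\Gamma_{k+1}\pi \to \pi_1(C)/\Gamma_{k+1}\pi_1(C)$, which identifies $\Gamma_k\pi/\Gamma_{k+1}\pi\cong \frakL_k$ with $\Gamma_k\pi_1(C)/\Gamma_{k+1}\pi_1(C)$ compatibly with the canonical isomorphism~\eqref{eq:canonical_iso}. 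I would choose $\lambda\in \Gamma_k\pi$ whose image under this isomorphism equals $\Lambda$, represent $\lambda$ by a closed curve $\gamma\subset \Sigma$, pick any $\varepsilon$-framed knot resolution $L_0'\subset U$ of $\gamma$, and let $L_0\subset C$ be the image of $L_0'$ under the inclusion of $U$ as a standard collar of $\partial_+C$ in $C$. By construction, $C_{L_0}=C\circ U_{L_0'}$ in the monoid $\calC$. The additivity of $\tau_{2k-2}$ on $\calC[2k-2]$, combined with Theorem~B and the identification $\{\lambda\}_k\leftrightarrow \Lambda$, then gives
\begin{equation*}
\tau_{2k-2}(C_{L_0}) = \tau_{2k-2}(C) + \frac{\varepsilon}{2}\,\eta_{2k-2}\!\bigl(\Lambda \hbox{\textbf{\,-\! -\! -}\,} \Lambda\bigr).
\end{equation*}

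The main obstacle is to prove $\tau_{2k-2}(C_L)=\tau_{2k-2}(C_{L_0})$. By construction, $[L]$ and $[L_0]$ define the same class $\Lambda$ in $\Gamma_k\pi_1(C)/\Gamma_{k+1}\pi_1(C)$ and hence differ by an element of $\Gamma_{k+1}\pi_1(C)$. Here a ``little bit'' of clasper calculus enters: the $\pm 1$-framed surgery along $L$ in $C$ can be represented, up to $Y_{2k-1}$-equivalence in the sense of Habiro, by the same degree-$(k-1)$ Y-tree clasper as the surgery along $L_0$, with leaves encoding a common commutator decomposition of $\Lambda$; any discrepancy between the two representations is absorbed by the move relations of clasper calculus into higher-degree claspers, hence into $\calC[2k-1]$. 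Since $Y_{2k-1}$-equivalent homology cylinders share the same Johnson homomorphisms $\tau_j$ for all $j\leq 2k-2$, we conclude that $\tau_{2k-2}(C_L)=\tau_{2k-2}(C_{L_0})$. Combining all of the above then yields formula~\eqref{eq:even_bis}.
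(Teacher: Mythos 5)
Your overall strategy --- push the surgery into a collar of $\partial_+C$, write $C_{L_0}=C\circ U_{L_0'}$, and invoke additivity of $\tau_{2k-2}$ together with Theorem~B --- is exactly the paper's strategy, and that part of your argument is fine. The gap is in the step $\tau_{2k-2}(C_L)=\tau_{2k-2}(C_{L_0})$. You choose $L_0$ so that $[L_0]$ agrees with $[L]$ only modulo $\Gamma_{k+1}\pi_1(C)$, and then assert that the two surgeries are $Y_{2k-1}$-equivalent because both ``can be represented, up to $Y_{2k-1}$-equivalence, by the same degree-$(k-1)$ tree clasper.'' This is precisely the assertion that needs proof, and it is not available: an arbitrary $(\pm1)$-framed knot of nilpotency class $\geq k$ in an arbitrary homology cylinder is \emph{not} known to arise from a clasper surgery on the unknot (the paper stresses in the introduction and in Remarks \ref{rem:special_cases_B} and \ref{rem:special_cases_C} that these ``nilpotent surgeries'' of Cochran--Gerges--Orr are strictly more general than the clasper constructions, and that only in the instances where they \emph{do} arise from claspers can one argue as you propose). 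So the sentence ``any discrepancy \dots is absorbed by the move relations of clasper calculus into higher-degree claspers'' has no justification for a general $L$; it is the heart of the matter, not a routine absorption.

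The paper closes this gap differently, and with a much more limited use of claspers. It first upgrades Lemma \ref{lem:independence}: by performing surgery on a tree clasper $G$ with $2k$ nodes having one leaf dual to $L$ (which leaves $C$ unchanged but multiplies $[L]$ by an arbitrary element of $\Gamma_{2k+1}\pi_1(C)$), and by using that $\rho_{2k}$ is invariant under surgery along graph claspers with $2k$ nodes, it shows that $\rho_{2k}(C_L)$ depends only on $C$, $\varepsilon$ and the conjugacy class of $[L]$ in $\pi_1(C)/\Gamma_{2k+1}\pi_1(C)$. Since $c_+$ induces an isomorphism on this quotient, one may then take the comparison knot $L_0$ in the collar to agree with $L$ modulo $\Gamma_{2k+1}$ (not merely modulo $\Gamma_{k+1}$), whence $\rho_{2k}(C_L)=\rho_{2k}(C_{L_0})$ on the nose, and the computation via \eqref{eq:trees_bis} finishes the proof. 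Note that claspers are applied here to an auxiliary tree attached to $L$ inside the fixed manifold $C$ --- never to a claimed clasper presentation of the surgery along $L$ itself. If you want to keep your weaker matching modulo $\Gamma_{k+1}$, you would first have to prove independently that $\tau_{2k-2}(C_L)$ depends only on $\{[L]\}_k$, which is essentially the statement you are trying to establish.
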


\noindent
Implicit in this statement are the choices of an orientation of $L$ 
and of an arc connecting $L$ to the base point of $C$ to have $[L] \in \pi_1(C)$ well-defined.
It is easily checked that  $\Lambda \hbox{\textbf{\,-\! -\! -}\,} \Lambda$ is independent of these choices.
(In fact, the same remark applies to Theorem B.)

\begin{proof}[Proof of Theorem \ref{thm:generalized_B}]
Since $C\in \calC[2k-2]$, we have $C_L \in \calC[2k-2]$ by Lemma~\ref{lem:surgery}.
We now prove the second statement of the theorem.

Here, we use the techniques of ``clasper surgery'' developed by Goussarov and Habiro,
and we  follow the terminology of \cite{Habiro}.
Let  $G$ be a tree clasper in ${C}$ with $2k$ nodes and, therefore, $2k+2$ leaves:
we assume that $G$ is disjoint from $L$ and that one leaf of $G$ bounds a disk $D$ which intersects transversely $L$ in one point.
Then, by elementary ``clasper calculus'', we get the following:
\begin{enumerate}
\item Surgery along $G$ does not change $C$ (up to diffeomorphism) but modifies $L$ to a new  $\varepsilon$-framed knot $L_G$ in $C$.
\item The homotopy class of $L_G$ is the product  of $[L]\in \pi_1(C)$ by the element of $\Gamma_{2k+1} \pi_1(C)$ 
that is defined by the planar binary rooted tree corresponding to the pair $(G,D)$.
\end{enumerate} 
Note also that any commutator of length $2k+1$ in $\pi_1(C)$ can be realized by such a tree clasper $G$.
Besides, $\rho_{2k}$ is invariant under surgery along a graph clasper with $2k$ nodes.
(This can be proved as follows: let $D$ be a homology cylinder and let $H\subset D$ be a graph clasper with $2k$ nodes; 
then $D_H$ can  be obtained  from $D$ by cutting it along a compact  oriented surface
$S\subset D$ and by regluing it with an element of the $2k$th term of the lower central series
of the Torelli group of~$S$ \cite[Theorem 5.7]{HM}; 
since this subgroup is contained in the $2k$th term of the Johnson filtration of the mapping class group of $S$,
a van Kampen argument shows that there exists an isomorphism 
between $\pi_1(D)/ \Gamma_{2k+1} \pi_1(D)$ and $\pi_1(D_H)/ \Gamma_{2k+1} \pi_1(D_H)$
which is compatible with the boundary parametrizations; it follows that $\rho_{2k}(D) =\rho_{2k}(D_H)$.)
Therefore
$$
\rho_{2k}\big(C_{(L_G)}\big) =  \rho_{2k}\big((C_{L})_G\big) = \rho_{2k}\big(C_L\big).
$$ 
Thus, we have improved the conclusions of Lemma \ref{lem:independence} as follows:
$\rho_{2k}(C_L)$ only depends on $C$, $\varepsilon$ 
and the conjugacy class of $L$  in the nilpotent quotient $\pi_1(C)/\Gamma_{2k+1} \pi_1(C)$. 
Choose a symplectic expansion $\theta$ of $\pi$.
By Remark \ref{rem:truncation}, $r_{[k,2k[}^\theta(C_L)$ only depends on the same data.

Since the homomorphism
$\pi/ \Gamma_{2k+1} \pi \to \pi_1(C)  /\Gamma_{2k+1} \pi_1(C)$
induced by the parametrization $c_+\colon \Sigma \to C$ of $\partial_+C$ is an isomorphism, 
we can assume that $L$ sits in a collar neighborhood $N$ of $\partial_+C$. 
Let $L' \subset U$ be the $\varepsilon$-framed knot in the usual cylinder that corresponds to $L \subset N$
through the homeomorphism 
$$
c_+ \times  \operatorname{id}_{[-1,+1]}  \colon U= \Sigma \times [-1, + 1] \stackrel{\cong}{\longrightarrow} \partial_+C \times [-1,+1] = N.
$$
Therefore
\begin{eqnarray}
\label{eq:r}\qquad r_{[k,2k[}^\theta\big(C_L\big) &=& r_{[k,2k[}^\theta \big( C \circ U_{L'} \big) \\
\notag &=& r_{[k,2k[}^\theta (C) +  r_{[k,2k[}^\theta \big(  U_{L'} \big)\\
\notag &\stackrel{\eqref{eq:trees_bis}}{=}& r_{[k,2k[}^\theta (C) + \frac{\varepsilon}{2} \cdot \theta_{k}(\ell) \hbox{\textbf{\,-\! -\! -}\,}  \theta_{k}(\ell) +
\varepsilon \cdot \theta_{k}(\ell) \hbox{\textbf{\,-\! -\! -}\,}  \theta_{k+1}(\ell)
\end{eqnarray}
where $\ell \in \pi$ is any element such that $\{\ell\}_{k+1}$ is mapped to $\{[L]\}_{k+1}$ by 
the isomorphism $c_+\colon \pi / \Gamma_{k+2} \pi \to \pi_1(C)  /\Gamma_{k+2} \pi_1(C)$.
In particular, using  \eqref{eq:hcob_rt}, we obtain
$$
\tau_{2k-2}(C_L) =  \tau_{2k-2}(C) + \frac{\varepsilon}{2} \cdot \theta_{k}(\ell) \hbox{\textbf{\,-\! -\! -}\,}  \theta_{k}(\ell).
$$
To conclude the proof, it remains to notice that the element $\theta_k(\ell) \in \frakL_k$, which is canonically identified with $\{\ell\}_k \in \Gamma_k \pi / \Gamma_{k+1}\pi$ by \eqref{eq:thetagam_k}, is mapped to $\{[L]\}_k \in \Gamma_k \pi_1(C) / \Gamma_{k+1}\pi_1(C)$ via the isomorphism induced by $c_+$.
\end{proof}

\begin{rem}\label{rem:special_cases_B}
Some special cases of Theorem B and Theorem \ref{thm:generalized_B} are already known.
Most of the case $k=2$ can be  found in the literature:
when $\gamma$ is a bounding simple closed curve in~$\Sigma$, formula \eqref{eq:even} for $\tau_2(t_\gamma)$ appears in \cite[Proposition 1.1]{Mor89}
and, when $L$ is a null-homologous $\varepsilon$-framed knot in a homology cylinder~$C$, formula~\eqref{eq:even_bis} for $\tau_2(C_L)$ can be deduced from \cite[Lemma~3.3]{MM_Y3}.
Furthermore, if  $L \subset C $ is obtained from the  $\varepsilon$-framed unknot by surgery along an ``admissible tree clasper'' with ${(k-1)}$ ``nodes''
(in the sense of \cite{Habiro}), then 
\eqref{eq:even} for  $\tau_{2k-2}(C_L)$  may be deduced fro \cite[\S 3.8]{CST}.
\end{rem} 

\begin{rem}
In fact, the arguments used to prove \eqref{eq:r} show the following: under the assumptions of Lemma \ref{lem:independence}, 
$\rho_{2k}(C_L)$ depends only on $\rho_{2k}(C)$, $\varepsilon$ and the conjugacy class of $L$ in $\pi_1(C)/\Gamma_{k+2}\pi_1(C)$.
\end{rem}

\subsection{Proof of Theorem C and generalizations}

Let $k\geq 2$ be an integer. 

\begin{proof}[Proof of Theorem C]
By assumption, $\{[\gamma_+]\}_k = \{[\gamma_-]\}_k \in \Gamma_k \pi/\Gamma_{k+1} \pi$
and we deduce from Theorem B that $\tau_{2k-2}(U_{L_+})=-\tau_{2k-2}(U_{L_-})$: 
consequently, $U_{L_-}U_{L_+}$ belongs to $\calC[2k-1]$. 
Next, we have 
\begin{eqnarray*}
\eta^{-1}\tau_{2k-1}(U_{L_-}U_{L_+})
&\stackrel{\eqref{eq:hcob_rt}}{=}&  r_{2k-1}^\theta(U_{L_-}U_{L_+}) \\
&= &r_{2k-1}^\theta(U_{L_-}) +r_{2k-1}^\theta( U_{L_+}) \\
&\stackrel{\eqref{eq:trees_bis}}{=} & 
  \theta_{k}([\gamma_+]) \hbox{\textbf{\,-\! -\! -}\,}  \theta_{k+1}([\gamma_+]) 
-   \theta_{k}([\gamma_-]) \hbox{\textbf{\,-\! -\! -}\,}  \theta_{k+1}([\gamma_-]).
\end{eqnarray*}
Besides, we have
\begin{eqnarray*}
\theta(\delta) &= \quad \ & \theta([\gamma_+]) \cdot \theta([\gamma_-])^{-1}\\
 &\equiv_{k+2}& (1+\theta_k([\gamma_+]) + \theta_{k+1}([\gamma_+]) ) \cdot (1- \theta_k([\gamma_-]) - \theta_{k+1}([\gamma_-]) ) \\
&\equiv_{k+2}& 1 + \theta_{k+1}([\gamma_+]) - \theta_{k+1}([\gamma_-]). 
\end{eqnarray*}
Therefore, we get
$$
\eta^{-1}\tau_{2k-1}(U_{L_-}U_{L_+}) = \theta_{k}([\gamma_+]) \hbox{\textbf{\,-\! -\! -}\,} \theta_{k+1}(\delta).
$$

Along the same lines, we prove that $(t_{\gamma_-})^{-1}t_{\gamma_+}$ belongs to $\calW[2k-1]$, and we obtain 
$$
\eta^{-1}\tau_{2k-1}\big((t_{\gamma_-})^{-1}t_{\gamma_+}\big)=  \theta_{k}([\gamma_+]) \hbox{\textbf{\,-\! -\! -}\,} \theta_{k+1}(\delta).
$$
We conclude  using the fact \eqref{eq:thetagam_k}, which says that $\theta_k([\gamma_+])=\{[\gamma_+]\}_k$ and $\theta_{k+1}(\delta)=\{\delta\}_{k+1}$.
\end{proof}
 
We finish with a wide generalization of  Theorem C. 
Recall from \cite{GL_blinks} that a \emph{blink} 
in a compact oriented $3$-manifold $M$ is a $2$-component framed link $L=(L_+,L_-)$ in  $M$, 
such that $L_+$ and $L_-$ cobound a compact orientable surface (called a \emph{connecting surface} of $L$) 
with respect to which the framing of $L_\pm$  is $\pm 1$. A~blink $L$ is \emph{coherently oriented}
if the knots $L_+$ and $L_-$ are oriented in such a way that $L_+\cup (-L_-)=\partial B$ 
for an oriented connecting surface $B$ of $L$.

Let $k\geq 2$ be an integer. We shall say  that a blink $L$ is of \emph{class~$\geq k$}
if both $L_+$ and $L_-$ are of nilpotency class~$\geq k$ and they are the same modulo $\Gamma_{k+1}\pi_1(M)$,
after  an arbitrary choice of arcs connecting $L_\pm$ to the base point of $M$
and the choice of a coherent orientation  of $L$.

\begin{thm} \label{thm:generalized_C}
Let $k\geq 2$ be an integer, let $C \in \calC[2k-1]$ and let $L$ be a blink of class $\geq k$ in $C$.
We set
\begin{equation} \label{eq:Lambda}
\Lambda := \{[L_\pm]\}_k \in \frac{\Gamma_k \pi_1(C)}{\Gamma_{k+1}\pi_1(C)} \stackrel{\eqref{eq:canonical_iso}}{\cong } \frakL_k
\end{equation}
and 
\begin{equation} \label{eq:Delta}
\Delta := \big\{ [L_+] [L_-]^{-1} \big\}_{k+1}\in 
 \frac{\Gamma_{k+1}\pi_1(C)}{\Gamma_{k+2}\pi_1(C)} \stackrel{\eqref{eq:canonical_iso}}{\cong } \frakL_{k+1}.
\end{equation}
Then,  $C_{L}$ belongs to $\calC[2k-1]$ and we have
\begin{equation} \label{eq:odd_bis}
\tau_{2k-1}(C_L) = \tau_{2k-1}(C) +
 \eta ( \Lambda \hbox{\textbf{\,-\! -\! -}\,} \Delta).
\end{equation}
\end{thm}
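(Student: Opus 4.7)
My plan is to fix a symplectic expansion $\theta$ of $\pi$ and to establish the formula directly via the additivity of the diagrammatic Dehn--Nielsen homomorphism $r^\theta$ on $\calC[k]$ in degrees strictly less than $2k$. First, I would isotope the blink $L$ inside $C$ so that its two components land in disjoint collar neighborhoods $N_+ \cong U$ of $\partial_+C$ and $N_- \cong U$ of $\partial_-C$, with their individual framings preserved; such an isotopy is available by general position, since two disjoint framed knots in a connected $3$-manifold with non-empty two-component boundary can always be simultaneously pushed into prescribed open collars. Identifying $N_\pm$ with $U = \Sigma\times[-1,+1]$ via the boundary parametrization and setting $C':=C\setminus(N_-\cup N_+)$, which is isomorphic to $C$ in $\calC$ (the collars being trivial cylinders), we obtain the decomposition
$$C_L \cong U_{L_-} \circ C \circ U_{L_+} \quad \text{in } \calC.$$

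Next, let $\gamma_\pm:=\operatorname{pr}(L_\pm)\subset\Sigma$, so that $L_\pm$ is a knot resolution of $\gamma_\pm$ with framing $\pm 1$. The identity $c_+([\gamma_\pm])=[L_\pm]\in\pi_1(C)$ (where $c_-$ may be replaced by $c_+$ modulo $\Gamma_{2k}\pi_1(C)$ since $C\in\calC[2k-1]$), combined with the canonical isomorphism \eqref{eq:canonical_iso} and \eqref{eq:thetagam_k}, gives $\theta_k(\gamma_+)=\theta_k(\gamma_-)=\Lambda$ and $\theta_{k+1}(\delta)=\Delta$, where $\delta:=\gamma_+\gamma_-^{-1}\in\Gamma_{k+1}\pi$. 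Since $C$ and both $U_{L_\pm}$ lie in $\calC[k]$, and since the BCH product on $\hat\calT^\Q$ trivializes to addition modulo $\hat\calT^\Q_{\geq 2k}$ when both arguments belong to $\hat\calT^\Q_{\geq k}$, the truncation $r^\theta_{[k,2k[}$ is additive on $\calC[k]$, yielding
$$r^\theta_{[k,2k[}(C_L) = r^\theta_{[k,2k[}(U_{L_-}) + r^\theta_{[k,2k[}(C) + r^\theta_{[k,2k[}(U_{L_+}).$$
By Theorem \ref{thm:trees}, in the form of \eqref{eq:trees_bis}, the outer summands evaluate to
$$r^\theta_{2k-2}(U_{L_\pm}) = \pm\tfrac{1}{2}\,\Lambda \hbox{\textbf{\,-\! -\! -}\,} \Lambda, \qquad r^\theta_{2k-1}(U_{L_\pm}) = \pm\,\Lambda \hbox{\textbf{\,-\! -\! -}\,} \theta_{k+1}(\gamma_\pm).$$

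Summing these sign-opposite contributions, the quadratic pieces in degree $2k-2$ cancel; hence $r^\theta_{2k-2}(C_L)=r^\theta_{2k-2}(C)=0$, whence $C_L\in\calC[2k-1]$. In degree $2k-1$, the linear pieces combine into $\Lambda \hbox{\textbf{\,-\! -\! -}\,} (\theta_{k+1}(\gamma_+)-\theta_{k+1}(\gamma_-)) = \Lambda \hbox{\textbf{\,-\! -\! -}\,} \Delta$, and adding $r^\theta_{2k-1}(C)=\eta^{-1}\tau_{2k-1}(C)$ before applying $\eta$ yields \eqref{eq:odd_bis}. The principal obstacle is the geometric reduction of the first step: one must verify carefully that the simultaneous isotopy of $L_+$ and $L_-$ into their respective collars preserves the framings intrinsic to each component and that the identification of the collars with $U$ is compatible with the boundary parametrizations of $C$; the connecting surface of the blink need not be preserved, but it plays no role in the formula. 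Once this decomposition is in place, the calculus of Theorem \ref{thm:trees} and the BCH-additivity of $r^\theta$ complete the proof mechanically.
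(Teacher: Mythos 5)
Your final computation—the cancellation of the quadratic terms in degree $2k-2$ and the combination $\Lambda \hbox{\textbf{\,-\! -\! -}\,} (\theta_{k+1}(\gamma_+)-\theta_{k+1}(\gamma_-)) = \Lambda \hbox{\textbf{\,-\! -\! -}\,} \Delta$ in degree $2k-1$—is exactly what the paper's proof arrives at, but the geometric reduction you build it on has two genuine gaps. The first is the framing. The blink framing of $L_\pm$ is the surface framing induced by a connecting surface $B$, and by Lemma \ref{lem:blink_Lk} this differs from the \emph{preferred} framing by $\operatorname{Lk}(L_+,L_-)$. Theorem \ref{thm:trees}, in the form \eqref{eq:trees_bis}, applies only to knot resolutions in $U$ that are $(\pm 1)$-framed with respect to the preferred parallel, so your argument can at best treat blinks with $\operatorname{Lk}(L_+,L_-)=0$. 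The paper devotes half of its proof to reducing the general case to this one, by an induction on the linking number: the change $l\mapsto l-1$ is realized by an auxiliary surgery on a $(+1)$-framed knot $K$ of nilpotency class $\geq k+1$, which by Lemma \ref{lem:surgery} and Lemma \ref{lem:one_fact} alters neither $\tau_{2k-1}$ nor the classes $\Lambda,\Delta$. Your proposal never mentions the linking number.

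The second gap is the claimed simultaneous isotopy of $L_+$ and $L_-$ into disjoint collars: no such isotopy exists in general (a knot in a homology cylinder need not be isotopic into a collar, which is a handlebody, and even in $U$ two components that link each other cannot be separated into disjoint product slices by isotopy). What is available is homotopy invariance of $\rho_{2k}$ under moves on the knot (Lemma \ref{lem:independence}, upgraded via claspers in the proof of Theorem \ref{thm:generalized_B}), applied one component at a time; and this is exactly where the real subtlety sits. After surgery on $L_-$, the meridians of $L_-$ become nontrivial elements of $\Gamma_k\pi_1(C_{L_-})$, so the class of $L_+$ modulo $\Gamma_{k+2}$ in $\pi_1(C_{L_-})$ differs from its class in $\pi_1(C)$ by a commutator $[X_1,\{\ell_-\}_k]\in\frakL_{k+1}$—precisely the order at which $\Delta$ lives. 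The paper computes this correction (see the derivation around \eqref{eq:so_long}) and shows that its contribution $\Lambda \hbox{\textbf{\,-\! -\! -}\,} [X_1,\Lambda]$ vanishes by the AS relation. Your decomposition $C_L\cong U_{L_-}\circ C\circ U_{L_+}$ silently assumes this correction away. The answer you obtain is correct, but neither obstruction is addressed, and both require genuine work.
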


\noindent
Implicit in this statement are the choices of {a coherent orientation of $L$}
and of arcs connecting $L_\pm$ to the base point of $C$ to have $[L_\pm] \in \pi_1(C)$ well-defined.
It is easily checked that  $\Lambda \hbox{\textbf{\,-\! -\! -}\,} \Delta$ is independent 
of these choices.

Observe that Theorem \ref{thm:generalized_C} is indeed a generalization of Theorem C.
With the assumptions of the latter, let $\Sigma_\pm$ be a Seifert surface of $L_\pm \subset U$
and, in the product of two copies of $U$, connect by a tube  $\Sigma_+$ (in the ``top'' copy of~$U$)
to $\Sigma_-$ (in the ``bottom'' copy of $U$) in order to get an oriented surface $B \subset U\, U \cong U$:
then $L:=(L_+,L_-)$ is a blink with connecting surface $B$.

We need two preparation lemmas to prove Theorem \ref{thm:generalized_C}.
Recall that there is a notion of ``linking number'' in any homology cylinder  \cite[Appendix~B]{MM_Y3}.

\begin{lem} \label{lem:blink_Lk}
Let $C\in \calI\calC$ and $L$ be a blink in $C$ of class $k\geq 2$, which we orient coherently.
Then the preferred parallel of $L_\pm$ differs from the parallel defined by a connecting surface $B$ of $L$
by the integer $\operatorname{Lk}(L_+,L_-)$.
\end{lem}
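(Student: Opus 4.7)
The strategy is to compute the linking numbers $\operatorname{Lk}(\lambda_B(L_\pm), L_\pm)$ directly from the connecting surface $B$ and then to invoke the standard identification between framing shifts and such linking numbers. Here $\lambda_B(L_\pm)$ denotes the parallel of $L_\pm$ obtained by pushing off $L_\pm$ within $B$, viewed as a simple closed curve on $\partial \operatorname{T}(L_\pm)$ after a small isotopy.

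Recall from \cite[Appendix~B]{MM_Y3} that, for any null-homologous oriented knot $K$ in a homology cylinder, a parallel $\lambda$ of $K$ is uniquely written $\lambda = \lambda_0(K) + n\, \mu(K)$ in $H_1(\partial \operatorname{T}(K))$ with $n = \operatorname{Lk}(\lambda,K)$, using that $\operatorname{Lk}(\lambda_0(K),K)=0$ by definition and $\operatorname{Lk}(\mu(K),K)=1$. The lemma is thus equivalent to the claim
$$
\operatorname{Lk}(\lambda_B(L_\pm),L_\pm) \ = \ \operatorname{Lk}(L_+, L_-).
$$

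The key geometric input is that removing a small open collar of $L_+$ from $B$ produces an oriented surface $B'\subset C\setminus \operatorname{int\, T}(L_+)$ with $\partial B' = \lambda_B(L_+) \cup (-L_-)$. Hence $\lambda_B(L_+)$ is homologous to $L_-$ in the complement of $L_+$, which gives
$$
\operatorname{Lk}(\lambda_B(L_+), L_+) \ = \ \operatorname{Lk}(L_-, L_+) \ = \ \operatorname{Lk}(L_+, L_-)
$$
by symmetry of the linking number. The identical argument with the roles of $L_+$ and $L_-$ exchanged, using the coherent orientation $\partial B = L_+ \cup (-L_-)$, produces an oriented surface in $C\setminus \operatorname{int\, T}(L_-)$ whose boundary is $L_+ \cup (-\lambda_B(L_-))$, yielding $\operatorname{Lk}(\lambda_B(L_-), L_-) = \operatorname{Lk}(L_+, L_-)$ as well.

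There is no serious obstacle in this plan: the argument reduces to the symmetry of the linking number and the elementary observation that a connecting surface provides a homology between the parallels of its two boundary components. The only delicate point is the bookkeeping of orientations on $B$, which is precisely what the coherent orientation hypothesis is designed to handle, ensuring the resulting signs yield $+\operatorname{Lk}(L_+,L_-)$ for both components of the blink.
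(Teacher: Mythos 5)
Your argument is correct and is essentially the paper's proof in lightly repackaged form: both reduce the framing shift to the fact that the $B$-parallel of $L_+$ is homologous to $L_-$ in the exterior of $L_+$ (via $B$ minus a collar of its boundary) and then measure this by intersecting with a Seifert surface of $L_+$ --- the paper carries out that last step as an explicit pairing computation in the two-component exterior $E$, whereas you package it through the homological invariance and symmetry of $\operatorname{Lk}$, which is slightly cleaner but not a different idea. The one point worth flagging is that your conclusion $\lambda_B(L_\pm)=\lambda_0(L_\pm)+\operatorname{Lk}(L_+,L_-)\,\mu(L_\pm)$ is the negative of the relation $[\lambda_0]=[\lambda]+r[\mu]$, $r=\operatorname{Lk}(L_+,L_-)$, displayed in the paper's proof (your sign is the one given by the standard normalizations, as a Hopf band shows); this discrepancy is purely a matter of orientation conventions in the boundary intersection pairings and is immaterial for the paper, since the lemma is only invoked in the case $\operatorname{Lk}(L_+,L_-)=0$.
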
 

\begin{proof}
Let $\lambda_0:=\lambda_0(L_+) \subset \partial \operatorname{T}(L_+)$ 
be the preferred parallel of $L_+$ and let $\lambda:= \partial (C\setminus \operatorname{int\,T}(L_+)) \cap B$ 
be the parallel defined by $B$.
There is a unique integer $r$ such that
$$
[\lambda_0] = [\lambda] + r [\mu]  \ \in H_1\big(\partial \operatorname{T}(L_+) \big)
$$
where $\mu:=\mu(L_+)$ denotes the oriented meridian of $L_+$, and we are asked to show that
\begin{equation} \label{eq:rrr}
r=\operatorname{Lk}(L_+,L_-).
\end{equation}

Let $S$ be a Seifert surface for $L_+$ which is transverse to $L_-$. The boundary of the compact oriented surface
$$
S^\circ  := S \cap E
\quad \hbox{where} \
E:= C \setminus \operatorname{int}\big(\operatorname{T}(L_+) \cup \operatorname{T}(L_-)\big)
$$
consists of $\lambda_0$, $r_+$ copies of the oriented meridian $\mu':=\mu(L_-)$ and $r_-$ copies of $-\mu'$.
Let $\lambda'$ be the parallel of $L_-$ defined by $B$. Then we obtain 
$$
r= [\lambda] \bullet_{\partial E }  [\lambda_0]
=  [\lambda] \bullet_{ E } [ S^\circ] = [\lambda'] \bullet_{ E } [ S^\circ] =  
[\lambda'] \bullet_{\partial E } [(r_+-r_-)\mu'] = r_+ - r_-  .
$$
Since $\operatorname{Lk}(L_+,L_-)=r_+ -r_-$, this proves \eqref{eq:rrr}.
\end{proof}

\begin{lem} \label{lem:one_fact}
Let $k\geq 2$ be an integer. Let $C\in \calI\calC$   and  let $K\subset C $ be a $\varepsilon$-framed knot of nilpotency class $\geq k$, 
 where $\varepsilon \in \{ -1,+1\}$. 
\begin{enumerate}
\item Let $J\subset C$ be an oriented knot of nilpotency class $\geq k$  disjoint from~$K$. 
Then $J \subset {C}_K$ is also of nilpotency class $\geq k$ and, if ${\operatorname{Lk}(K,J)=0}$ in $C$, 
then its class $\{[J] \}_k \in  \frac{\Gamma_{k}\pi_1(C_K)}{\Gamma_{k+1}\pi_1(C_K)}$ corresponds to the class
$\{[J] \}_k \in  \frac{\Gamma_{k}\pi_1(C)}{\Gamma_{k+1}\pi_1(C)}$ through the canonical isomorphisms
\begin{equation*}
\frac{\Gamma_{k}\pi_1(C_K)}{\Gamma_{k+1}\pi_1(C_K)} \stackrel{\eqref{eq:canonical_iso}}{\cong } \frakL_{k}
\stackrel{\eqref{eq:canonical_iso}}{\cong } \frac{\Gamma_{k}\pi_1(C)}{\Gamma_{k+1}\pi_1(C)}.
\end{equation*}
\item Let $\mu(K)$ be the oriented meridian of $K$ regarded as a based loop in $C_K$. Then $\{\mu(K)\}_k \in \frac{\Gamma_{k}\pi_1(C_K)}{\Gamma_{k+1}\pi_1(C_K)}$ corresponds to $\{[K]^{-\varepsilon} \}_k \in \frac{\Gamma_{k}\pi_1(C)}{\Gamma_{k+1}\pi_1(C)}$ 
through the above isomorphisms.
\end{enumerate}
\end{lem}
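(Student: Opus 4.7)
The plan is to prove (1) directly and then deduce (2) from it using the surgery relation. For part (1), set $E := C \setminus \operatorname{int}\operatorname{T}(K)$. Since $[J] \in \Gamma_k\pi_1(C)$, the isomorphism $\pi_1(E)/\langle\!\langle\Gamma_k\pi_1(E), \mu(K)\rangle\!\rangle \cong \pi_1(C)/\Gamma_k\pi_1(C)$ from the proof of Proposition~\ref{prop:CGO}(ii) lets me write
\[
[J] = c \cdot \prod_{i=1}^s x_i\, \mu(K)^{\epsilon_i}\, x_i^{-1} \quad \text{in } \pi_1(E),
\]
with $c \in \Gamma_k\pi_1(E)$, $\epsilon_i \in \{\pm 1\}$ and $x_i \in \pi_1(E)$. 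Pushing this expression into $\pi_1(C_K)$ and using $\mu(K) \in \Gamma_k\pi_1(C_K)$ (Proposition~\ref{prop:CGO}(i)) immediately yields $[J] \in \Gamma_k\pi_1(C_K)$, which establishes the first assertion of~(1).

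Now assume ${\rm Lk}(K,J) = 0$. Abelianizing the lifted expression gives $[J] = \bigl(\sum_i\epsilon_i\bigr)[\mu(K)]$ in $H_1(E)$, and since the coefficient of $[\mu(K)]$ in the $H_1(E)$-class of a null-homologous curve is its linking number with $K$, one obtains $\sum_i\epsilon_i = 0$. Using $[x,g] \in \Gamma_{k+1}$ whenever $g \in \Gamma_k$, each conjugate $x_i\mu(K)^{\epsilon_i}x_i^{-1}$ is congruent to $\mu(K)^{\epsilon_i}$ modulo $\Gamma_{k+1}\pi_1(C_K)$, and the full meridional product collapses to $\mu(K)^{\sum\epsilon_i} = 1$. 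Hence $\{[J]\}_k = \{c\}_k$ in $\Gamma_k\pi_1(C_K)/\Gamma_{k+1}\pi_1(C_K)$; the analogous identity in $\Gamma_k\pi_1(C)/\Gamma_{k+1}\pi_1(C)$ is automatic since $\mu(K)$ is already trivial in $\pi_1(C)$. To conclude, I would verify that the common element $\{c\}_k$ determines the same element of $\frakL_k$ under the two instances of~\eqref{eq:canonical_iso}: this reduces to the fact that the two compositions $H = H_1(\Sigma) \xrightarrow{c_\pm} H_1(E) \to H_1(C)$ and $H_1(\Sigma) \xrightarrow{c_\pm} H_1(E) \to H_1(C_K)$ agree, both factoring through $H_1(E)/\langle[\mu(K)]\rangle$ with the identifications with $H$ coming from the shared boundary parametrization.

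For part (2), the $\varepsilon$-framing of $K$ means that in $\pi_1(C_K)$ the curve $\mu(K)^\varepsilon\lambda_0(K)$ bounds a meridian disk in the attached solid torus, so $\mu(K) = \lambda_0(K)^{-\varepsilon}$ and therefore $\{\mu(K)\}_k = -\varepsilon\{\lambda_0(K)\}_k$ in $\Gamma_k\pi_1(C_K)/\Gamma_{k+1}\pi_1(C_K)$. Apply~(1) with $J := \lambda_0(K)$: this curve is null-homologous in $E$ by definition (so ${\rm Lk}(K,\lambda_0(K)) = 0$) and is homotopic to $K$ in $\operatorname{T}(K)$ (so $[\lambda_0(K)] = [K] \in \Gamma_k\pi_1(C)$). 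Part~(1) then gives that $\{\lambda_0(K)\}_k$ in $\Gamma_k\pi_1(C_K)/\Gamma_{k+1}\pi_1(C_K)$ corresponds to $\{[K]\}_k$ in $\Gamma_k\pi_1(C)/\Gamma_{k+1}\pi_1(C)$ under~\eqref{eq:canonical_iso}. Combining, $\{\mu(K)\}_k$ corresponds to $-\varepsilon\{[K]\}_k = \{[K]^{-\varepsilon}\}_k$, as required. The main technical difficulty is the last step of~(1)---checking that $\{c\}_k$ yields the same element of $\frakL_k$ whether pushed into $\pi_1(C)$ or $\pi_1(C_K)$. This is essentially a compatibility statement about the two canonical isomorphisms~\eqref{eq:canonical_iso}, and it rests crucially on the fact that $C$ and $C_K$ share the same boundary parametrization $c_\pm$.
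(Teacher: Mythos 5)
Your proof is correct and follows essentially the same route as the paper's: the same decomposition $[J]=c\cdot\prod_i x_i\mu(K)^{\epsilon_i}x_i^{-1}$ in $\pi_1(E)$, the same use of $\operatorname{Lk}(K,J)=0$ to force $\sum_i\epsilon_i=0$ and collapse the meridional factors modulo $\Gamma_{k+1}$, the same compatibility check through $H_1(E)$ for the two instances of \eqref{eq:canonical_iso}, and the same deduction of (2) from (1) applied to $\lambda_0(K)$. The paper merely phrases the final compatibility step more explicitly, via commutative squares involving the free Lie rings $\frakL_k(H_1(E))$, $\frakL_k(H_1(C))$ and $\frakL_k(H_1(C_K))$.
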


\begin{proof}
 The second assertion follows immediately from the first one.
In fact,  we have $\mu(K) = (\lambda_0)^{-\varepsilon}$ in $\pi_1(C_K)$, 
where $\lambda_0 = \lambda_0(K)$ is the preferred parallel of $K$ with a suitable  arc-basing. 
Since $\operatorname{Lk}(K,\lambda_0)=0$ in $C$, and since $\lambda_0$  is homotopic to $[K]$ in $C$, the first assertion applies. 

In the sequel, we prove the first assertion. We  denote by $E:= C \setminus \operatorname{int\, T}(K)$ the exterior of the knot $K$,
we set $C':=C_K$ and, for the sake of clarity, we denote by $J'$ the knot $J$ regarded in $C'$.
According to Proposition \ref{prop:CGO}\! (ii), surgery along $K$ induces an isomorphism $\phi$ such that
$$ 
\begin{array}{c} 
\xymatrix @!0 @R=1cm @C=3cm {
  & \frac{\pi_1(C )}{\Gamma_k \pi_1(C)}  \ar@{->}[dd]^-\phi_-\cong  \\
\frac{\pi_1(E)}{\Gamma_k \pi_1(E)} \ar@{->>}[ru]  \ar@{->>}[rd] & \\
  & \frac{\pi_1(C')}{\Gamma_k \pi_1(C')}.
 }   
\end{array} 
$$
Since $\{J\}_{k-1}=1\in \pi_1(C)/\Gamma_k \pi_1(C)$, we have 
$$\{J'\}_{k-1}= \phi(\{J\}_{k-1})=1 \in  \pi_1(C')/\Gamma_k \pi_1(C')$$
which proves that $J'$ is of nilpotency class $\geq k$.

Let $\ell  \in \pi_1(C)$ and $\ell^* \in \pi_1(E)$ be the homotopy classes of $J$
in $C$ and $E$, respectively. Since $\ell \in \Gamma_k  \pi_1(C)$, we have that
\begin{equation} \label{eq:ell^*}
\ell^* = c \cdot \prod_{i=1}^r \mu_i^{\epsilon_i} \in \pi_1(E)
\end{equation}
where $c \in \Gamma_k  \pi_1(E)$, $\epsilon_1,\dots, \epsilon_r$ are signs and $\mu_1,\dots,\mu_r$ are copies of the oriented meridian of $K$ with possibly different arc-basings. 
Let $j\colon  E \to C$ and $j'\colon  E \to C'$ be the inclusions,  
and let $\ell'\in \pi_1(C')$ be the homotopy class of $J'$ in $C'$. Then 
\begin{eqnarray*}
  \ell' \ = \ j'(\ell^*)
  &=& j'(c) \cdot   \prod_{i=1}^r j'(\mu_i)^{\epsilon_i} \\
  &\equiv & j'(c) \cdot j'(\mu_1)^{\sum_i \epsilon_{i}} \mod \Gamma_{k+1} \pi_1(C')
 \end{eqnarray*}
where the last identity follows from the fact that  $j'(\mu_1),\dots,j'(\mu_r)$  belong to $\Gamma_k \pi_1(C')$ 
(by Proposition \ref{prop:CGO}\! (i)) and are conjugate to each other. 
Besides, \eqref{eq:ell^*} shows that the homology class of $J$ in $E$ is $\sum_i \epsilon_{i}$ times the oriented meridian of $K$: assuming now that $\operatorname{Lk}(K,J)=0$ in $C$, we deduce that $\sum_i \epsilon_{i}=0$ and get
\begin{equation} \label{eq:elle}
\{\ell'\}_k = \{j'(c)\}_k  \ \in \Gamma_k \pi_1(C')/ \Gamma_{k+1} \pi_1(C').
\end{equation}
Besides, we have $\ell=j(\ell^*)= j(c)$ which implies that 
\begin{equation} \label{eq:elle'}
\{\ell\}_k = \{  j(c)\}_k \ \in \Gamma_k \pi_1(C)/ \Gamma_{k+1} \pi_1(C).
\end{equation}
Observe that we have two commutative diagrams
$$
\xymatrix{
\frakL_k(H_1(E)) \ar@{->>}[r]  \ar@{->>}[d]_-j & 
\frac{\Gamma_k \pi_1(E)}{\Gamma_{k+1} \pi_1(E)} \ar@{->>}[d]^-j \\ 
\frakL_k(H_1(C)) \ar[r]_-\cong    &\frac{\Gamma_k \pi_1(C)}{\Gamma_{k+1} \pi_1(C)}
}
\quad \hbox{and} \quad
\xymatrix{
\frakL_k(H_1(E)) \ar@{->>}[r]  \ar@{->>}[d]_-{j'} & 
\frac{\Gamma_k \pi_1(E)}{\Gamma_{k+1} \pi_1(E)} \ar@{->>}[d]^-{j'} \\ 
\frakL_k(H_1(C')) \ar[r]_-\cong    &\frac{\Gamma_k \pi_1(C')}{\Gamma_{k+1} \pi_1(C')}
}
$$
and that the two compositions
$$
H_1(E)\stackrel{j}{\longrightarrow} H_1(C) \mathop{\longleftarrow}^{c_+}_{\cong } H
\quad \hbox{and} \quad
H_1(E)\stackrel{j'}{\longrightarrow} H_1(C') \mathop{\longleftarrow}^{c'_+}_{\cong } H
$$
are equal (because the surgery $C \leadsto C'$ has been performed on a null-homologous knot of $C$ with framing $\pm 1$).
Using those two observations, we deduce from \eqref{eq:elle} and \eqref{eq:elle'} 
that $\{\ell\}_k$ corresponds to $\{\ell'\}_k$ through the canonical isomorphisms.
\end{proof}

\begin{proof}[Proof of Theorem \ref{thm:generalized_C}]
We call $\operatorname{Lk}(L_+,L_-)\in \Z$ the \emph{linking number} of the blink $L$.
We first deal with the case of blinks $L$ with trivial linking number
(which, for instance, includes the situation considered in Theorem C). 
Then, by Lemma \ref{lem:blink_Lk}, 
the knot $L_\pm \subset C$ is $(\pm 1)$-framed in the sense of Section~\ref{subsec:surgeries}.
Let $E:= C \setminus \operatorname{int\, T}(L_-)$ be the exterior of the knot $L_-$.
We denote by $L'_+$ the knot $L_+\subset E$ seen in~$C':=C_{L_-}$, and
we shall consider $C_L$ as the result of doing surgery on $C'$ along $L'_+$.
By Lemma~\ref{lem:one_fact}\! (1), $L'_+$ is of nilpotency class~$\geq k$.

We claim that the framing of $L'_+$ inherited from the framing of $L_+$ is still~$+1$.
Then, a double application of Lemma~\ref{lem:surgery} will imply that $C_L \in \calC[2k-2]$.
Since $\operatorname{Lk}(L_+,L_-)=0$ and $L_+$ is null-homologous in $C$, we can find a compact oriented  surface $S$ in $E$ 
which can serve as  a Seifert surface for $L_+ \subset C$
as well as  for $L'_+ \subset C'$, 
showing that 
\begin{eqnarray*}
\partial \operatorname{T}(L_+) \supset \lambda_0(L_+)&=& 
\partial\big( S \cap (C \setminus \operatorname{int\, T}(L_+))\big)  \\
&=& \partial\big( S \cap (C' \setminus \operatorname{int\, T}(L'_+))\big) = \lambda_0(L'_+)
\subset   \partial \operatorname{T}(L'_+)
\end{eqnarray*}  
and proving our claim. 

We choose a symplectic expansion $\theta$ and apply twice the formula \eqref{eq:r} 
that has been shown in the proof of Theorem \ref{thm:generalized_B}. On the one hand, we have
\begin{equation} \label{eq:ell-}
r^\theta_{[k,2k[}(C_{L_-}) = 
 r_{[k,2k[}^\theta (C) -  \frac{1}{2} \cdot \theta_{k}(\ell_-) \hbox{\textbf{\,-\! -\! -}\,}  \theta_{k}(\ell_-) -
 \theta_{k}(\ell_-) \hbox{\textbf{\,-\! -\! -}\,}  \theta_{k+1}(\ell_-)
\end{equation}
where $\ell_- \in \pi$ is such that $\{\ell_-\}_{k+1}$ is mapped to $\{L_-\}_{k+1}$ by 
the isomorphism $c_+\colon  \pi / \Gamma_{k+2} \pi  \to \pi_1(C)/\Gamma_{k+2} \pi_1(C)$.
On the other hand, we have
\begin{equation} \label{eq:ell+}
r^\theta_{[k,2k[}(C_{L}) = 
 r_{[k,2k[}^\theta (C') + \frac{1}{2} \cdot \theta_{k}(\ell'_+) \hbox{\textbf{\,-\! -\! -}\,}  \theta_{k}(\ell'_+) +
 \theta_{k}(\ell'_+) \hbox{\textbf{\,-\! -\! -}\,}  \theta_{k+1}(\ell'_+)
\end{equation}
where $\ell'_+ \in \pi$ is such that $\{\ell'_+\}_{k+1}$ is mapped to $\{L'_+\}_{k+1}$ by 
the isomorphism $c'_+\colon  \pi / \Gamma_{k+2} \pi  \to \pi_1(C')  /\Gamma_{k+2} \pi_1(C')$.

We choose $\ell_-$ as above to have \eqref{eq:ell-}, 
and we also choose a $\delta\in \pi$ such that $\{\delta\}_{k+1}$ is mapped to $\Delta$ 
by $c_+\colon  \pi/ \Gamma_{k+2} \pi\to \pi_1(C)  /\Gamma_{k+2} \pi_1(C)$.
We set $\ell_+ := \delta \ell_-\in \pi$: then $\{\ell_+\}_{k+1}$ is mapped to $\{L_+\}_{k+1}$ by $c_+$.
Thus there exists $y\in \Gamma_{k+2} \pi_1(C)$ such that
$[L_+] = c_+(\ell_+)\, y \in \pi_1(C)$. Let $j\colon E \to C$ and $j'\colon E\to C'$ denote the inclusions.
There exists $\tilde y\in \Gamma_{k+2} \pi_1(E)$ such that $j(\tilde y)=y$, and then
\begin{equation} \label{eq:L+_E}
[L_+] = c^*_+ (\ell_+) \tilde y \cdot \prod_{i=1}^r \mu_i^{\epsilon_i}\in \pi_1(E)
\end{equation}
where  $c_+^* \colon \Sigma \to E$ is the corestriction of $c_+\colon \Sigma \to C$, 
 $\mu_1,\dots,\mu_r$ are copies of the oriented meridian of $L_-$ with possibly different  arc-basings, and $\epsilon_1,\dots,\epsilon_r$ are signs. We fix one of this based oriented meridian, say $\mu:=\mu_1$.
Then, for any $i\in \{2,\dots,r\}$, there exists $x_i\in \pi_1(E)$ such that $\mu_i = x_i \mu x_i^{-1}$.
We deduce the following identities in $\pi_1(C')/\Gamma_{k+2} \pi_1(C')$:
\begin{eqnarray}
\notag \{L'_+\}_{k+1} 
 &=& \Big\{  c'_+ (\ell_+) \cdot \prod_{i=1}^r \big[j'(x_i), j'(\mu)^{\epsilon_i}\big]\, j'(\mu)^{\epsilon_i} \Big\}_{k+1}\\
\notag &=& \Big\{  c'_+ (\ell_+) \cdot \prod_{i=1}^r \big[j'(x_i), j'(\mu)^{\epsilon_i}\big]
\cdot \prod_{i=1}^r j'(\mu)^{\epsilon_i} \Big\}_{k+1}\\
\notag &=& \Big\{  c'_+ (\ell_+) \cdot \prod_{i=1}^r \big[j'(x_i), j'(\mu)^{\epsilon_i}\big] \Big\}_{k+1} \\
\label{eq:so_long} &=& \Big\{  c'_+ (\ell_+) \cdot  \Big[\prod_{i=1}^r j'(x_i)^{\epsilon_i}, j'(\mu)\Big] \Big\}_{k+1}.
\end{eqnarray}
(Here the second and last identities follow from the fact that $j'(\mu) \in \Gamma_k \pi_1(C')$ 
by Proposition \ref{prop:CGO}\! (i), and
the third one is deduced from \eqref{eq:L+_E} and the fact that $\operatorname{Lk}(L_+,L_-)=0$.)
We now compute
$$
X:=\big\{ \widetilde{X} \big\}_{k+1}
\in \frac{\Gamma_{k+1} \pi_1(C')}{\Gamma_{k+2} \pi_1(C')}
\stackrel{\eqref{eq:canonical_iso}}{\cong } \frakL_{k+1},
\quad \hbox{where } \widetilde{X} := \Big[\prod_{i=1}^r j'(x_i)^{\epsilon_i}, j'(\mu)\Big]
$$
which is the Lie bracket of 
$$
X_1:=\Big\{\prod_{i=1}^r j'(x_i)^{\epsilon_i}\Big\}_1 \in \frac{\pi_1(C')}{\Gamma_2 \pi_1(C')} 
\stackrel{\eqref{eq:canonical_iso}}{\cong } \frakL_{1}=H
$$
with 
$$
X_2:=  \big\{ j'(\mu)\big\}_k  \in \frac{\Gamma_{k} \pi_1(C')}{\Gamma_{k+1} \pi_1(C')}
\stackrel{\eqref{eq:canonical_iso}}{\cong } \frakL_{k}.
$$ 
As it will appear later, we do not need to compute $X_1$ in a more explicit way.
As for $X_2$, we have $X_2=\{ \ell_-\}_k \in \frakL_k$ since $\{j'(\mu)\}_k=\{ [L_-]\}_k$ 
by Lemma \ref{lem:one_fact}\! (2) and we have choosen $\ell_-$ such that $c_+(\ell_-)$ represents $L_-$ modulo $\Gamma_{k+2} \pi_1(C)$. 
Hence we get
$
X =\big[X_1,\{\ell_-\}_k\big] \in \frakL_{k+1}.
$

Choose an $x_1 \in \pi$  representing the homology class $X_1$. Then, the previous paragraph shows
that $\widetilde{X} = c'_+\big([x_1, \ell_-]\big)$ modulo $\Gamma_{k+2} \pi_1(C')$,
and it follows from \eqref{eq:so_long} that we can take $\ell'_+:=\ell_+\,[x_1, \ell_-]$ to have \eqref{eq:ell+}.
We now estimate its value by the symplectic expansion $\theta$:
\begin{eqnarray*}
\theta(\ell'_+) 
&= \quad \ & \theta(\delta) \cdot \theta(\ell_-) \cdot \theta ([x_1,\ell_-])\\
&\equiv_{k+2}& (1+ \theta_{k+1}(\delta))\cdot (1+ \theta_{k}(\ell_-) + \theta_{k+1}(\ell_-)) \cdot (1+ [\theta_1(x_1),\theta_k(\ell_-)] ) \\
&= \quad \ & (1+ \{\delta\}_{k+1}) \cdot ( 1+ \{\ell_-\}_k + \theta_{k+1}(\ell_-)) \cdot ( 1+ [X_1,\{\ell_-\}_k]) \\
&\equiv_{k+2}& 1+ \{\ell_-\}_k +  \{\delta\}_{k+1} + \theta_{k+1}(\ell_-) + [X_1,\{\ell_-\}_k].
\end{eqnarray*}
Hence $\theta_k(\ell'_+) = \{\ell_-\}_k  = \theta_k(\ell_-)$ and  
$$\theta_{k+1}(\ell'_+) =  \{\delta\}_{k+1} + \theta_{k+1}(\ell_-) + \big[X_1,\{\ell_-\}_k\big].$$
Combining \eqref{eq:ell-} and \eqref{eq:ell+}, we obtain
\begin{eqnarray*}
r^\theta_{[k,2k[}(C_{L}) - r^\theta_{[k,2k[}(C)&=&
-  \frac{1}{2} \cdot \theta_{k}(\ell_-) \hbox{\textbf{\,-\! -\! -}\,}  \theta_{k}(\ell_-) -
 \theta_{k}(\ell_-) \hbox{\textbf{\,-\! -\! -}\,}  \theta_{k+1}(\ell_-)\\
&& + \frac{1}{2} \cdot \theta_{k}(\ell'_+) \hbox{\textbf{\,-\! -\! -}\,}  \theta_{k}(\ell'_+) +
 \theta_{k}(\ell'_+) \hbox{\textbf{\,-\! -\! -}\,}  \theta_{k+1}(\ell'_+)
\end{eqnarray*}
which, by the above estimate of $\theta(\ell'_+)$, implies that
\begin{eqnarray*}
r^\theta_{[k,2k[}(C_{L}) - r^\theta_{[k,2k[}(C) &=& 
\{\ell_-\}_k \hbox{\textbf{\,-\! -\! -}\,}  \{\delta\}_{k+1} + \{\ell_-\}_k \hbox{\textbf{\,-\! -\! -}\,} \big[X_1,\{\ell_-\}_k\big] \\
&=&  \{\ell_-\}_k \hbox{\textbf{\,-\! -\! -}\,}  \{\delta\}_{k+1}.
\end{eqnarray*}
(Here the second identity follows from the (AS) relation.)
In particular, the degree $2k-2$ part of $r^\theta_{[k,2k[}(C_L)$ is the same as that of $r^\theta_{[k,2k[}(C)$, 
and so is trivial: we deduce that $C_L \in \calC[2k-1]$.
Besides, using \eqref{eq:hcob_rt}, we deduce that~\eqref{eq:odd_bis} holds true 
for a blink $L$ with trivial linking number.

We now deal with blinks of arbitrary linking numbers.
Let $l\in \Z$ be an integer and let $\varepsilon \in\{-1,+1\}$.
We assume that the theorem holds true for any blink with linking number $l$,
and we wish to prove it for any blink of linking number $l-\varepsilon$.
This will prove the theorem by induction.
We shall only consider the case  $\varepsilon=+1$, the  case $\varepsilon=-1$ being similar.

Let  $ L=(L_+, L_-)$ be a blink of class $\geq k$ such that $\hbox{Lk}(L_+, L_-)=l$. 
Choose a connecting surface $B$ for $L$, and let $K \subset B$ be a separating simple closed curve
such that the closure of one of the two connected components of ${B \setminus K}$ 
is a $2$-hole disk $P$ with inner boundary  $L_- \cup K$ and with outer boundary $L_+$. 
We thicken $P$ to a genus two handlebody $A$ in $C$
and we assume that $\partial A$ cuts $B$ transversely along a curve  parallel to $K$,
which is shown as a dashed circle below: 
$$
\labellist
\scriptsize\hair 2pt
 \pinlabel {$K$} [lb] at 295 85
  \pinlabel {$L_-$} [lb] at 80 74
 \pinlabel {$L_+$} [lb] at 50 39
\endlabellist
\centering
\includegraphics[scale=0.27]{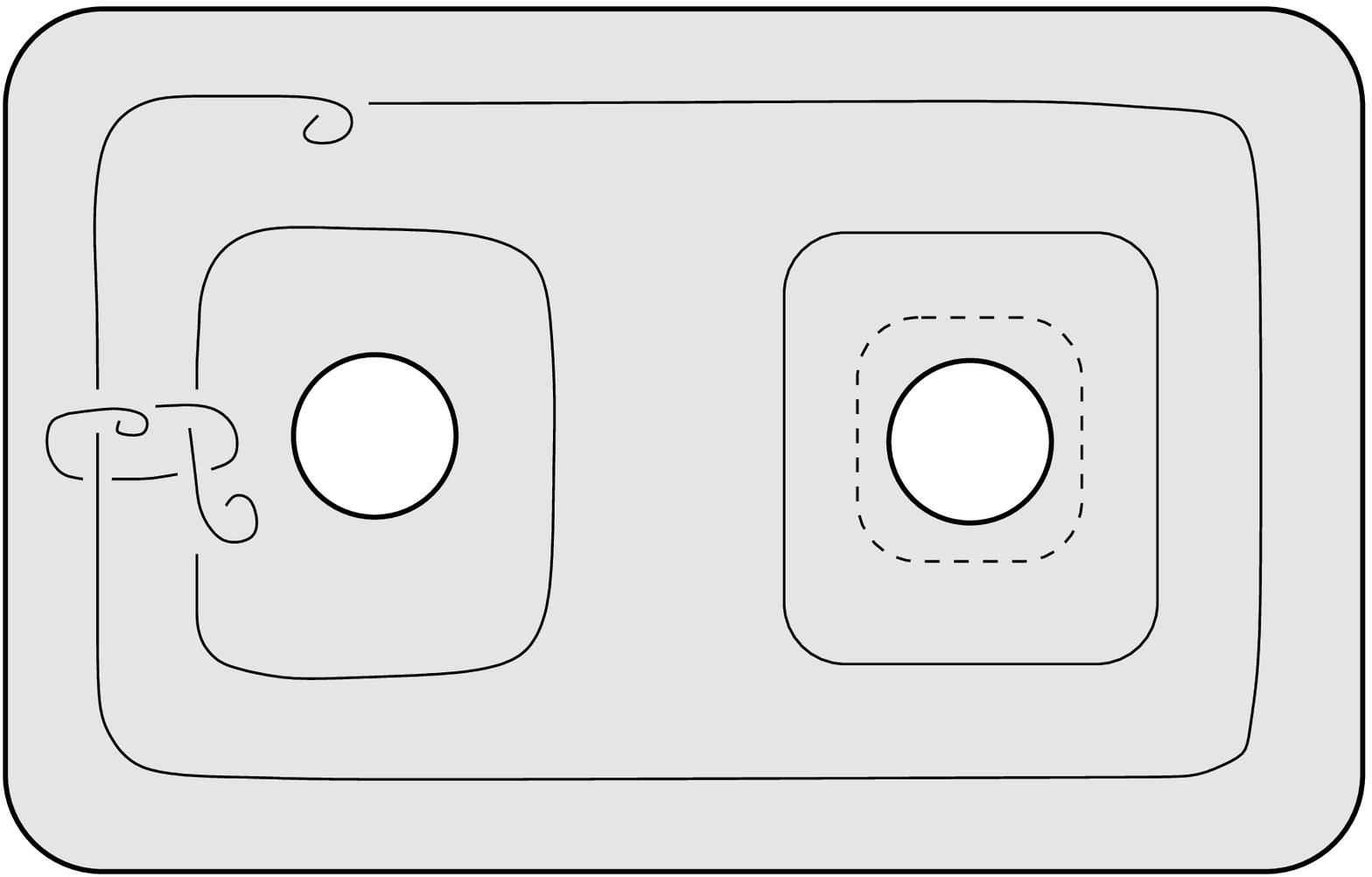}
$$
Let $\tilde L$ be the blink obtained from $L$ by surgery along the $(+1)$-framed trivial knot that is also shown in the above figure.
This surgery transforms $B$
to another surface $\tilde B$ with boundary $\tilde L$: hence $\tilde L$ is a blink (with connecting surface $\tilde B$). Note that 
$$
\hbox{Lk}(\tilde L_+, \tilde L_-)= \hbox{Lk}(L_+, L_-) -1 = l-1,
$$
and any blink $\tilde L$ of class $\geq k$ with  linking number $l-1$ 
arises in this way from a blink $L$ of class $\geq k$ with  linking number $l$.
Thus, we wish to prove that $\tilde L$ satisfies the two conclusions of the theorem.

The following sequence of handle-slide moves and isotopies show that $C_{\tilde L}$ 
is diffeomorphic to $C_{L\cup K}$ where the knot $K$ is now $(+1)$-framed:
$$
\labellist
\scriptsize\hair 2pt
 \pinlabel {$\cong$} at 546 160
  \pinlabel {$\cong$} at 1120 160
\endlabellist
\centering
\includegraphics[scale=0.21]{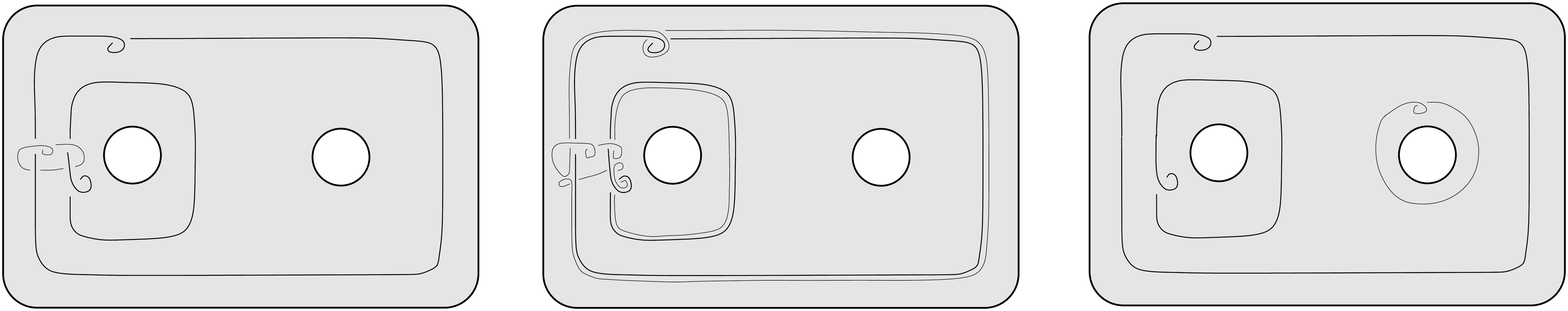}
$$
Let $L'$ be the framed link $L$ regarded in $C':=C_K$. Since $K$ is a $(+1)$-framed knot of nilpotency class $\geq k+1$,
we deduce from Lemma \ref{lem:surgery} that $C'\in {\calC[2k-1]}$ and that $\rho_{2k}(C)=\rho_{2k}(C')$, 
which implies that $\tau_{2k-1}(C)=\tau_{2k-1}(C')$.
 Furthermore, we deduce from Lemma \ref{lem:one_fact}\! (1)  that 
\begin{itemize} 
\item[(i)] $L'$ remains a blink of class $\geq k$ of linking number $l$ in $C'$,
\item[(ii)]  the classes $\Lambda'$ and $\Delta'$ assigned to $L'$ in $C'$, by  \eqref{eq:Lambda} and \eqref{eq:Delta} respectively,
 coincide with the classes $\Lambda$ and $\Delta$ assigned to $L$ in $C$.
\end{itemize} 
(Here we have used  that $\hbox{Lk}(L_\pm,K)=0$, 
which follows from the fact that $K$ bounds the surface $\overline{B\setminus P}$ in $C \setminus L_\pm$.)
Therefore, by our induction hypothesis, $C_{\tilde L}\cong C'_{L'}$ belongs to ${\calC[2k-1]}$ and 
\begin{eqnarray*}
\tau_{2k-1}(C_{\tilde L}) &=& \tau_{2k-1}(C') +  \eta ( \Lambda' \hbox{\textbf{\,-\! -\! -}\,} \Delta') 
\ = \ \tau_{2k-1}(C) + \eta ( \Lambda \hbox{\textbf{\,-\! -\! -}\,} \Delta).
\end{eqnarray*}
Finally, the classes assigned to $\tilde L$ in $C$ by  \eqref{eq:Lambda} and \eqref{eq:Delta}
are the same as those for $L$ in $C$ since $\tilde L$ is homotopic to $L$.
We conclude that $\tilde L$ too satisfies the conclusions of the theorem. 
\end{proof}

\begin{rem} \label{rem:special_cases_C}
There is a special case of  Theorem \ref{thm:generalized_C} that can be proved in a very different way.
Let $k\geq 2$ be an integer and let $C\in \calC[2k-1]$. Following the terminology of~\cite{Habiro},
let $G$ be a tree  clasper with $2k-1$ nodes. Then it is known that $C_G \in \calC[2k-1]$ and
\begin{equation}  \label{eq:wk}
\tau_{2k-1}(C_G) = \tau_{2k-1}(C) \pm \eta(\tilde G)
\end{equation}
where $\tilde{G}$ is the $H$-colored Jacobi diagram of degree $2k-1$ defined by~$G$.
(This can be deduced from the results of \cite{GL}, or, by combining Theorem~7.11 and Theorem 8.19 in \cite{CHM}.)
Assume now that there is an internal  edge $e$ of $G$ which subdivides it into two subtrees having $k$ and $k-1$ nodes, respectively.
Then, by applying  Move 9 in \cite{Habiro} sufficiently many times, 
we see that $G$ is equivalent to a basic clasper whose two leaves have nilpotency classes $\geq k+1$ and $\geq k$, respectively.
Replacing this basic clasper by the corresponding 2-component framed link and performing a handle-slide move, 
we see that surgery along $G$ can be realized by surgery along a blink $L$ of class $k$.
Then  \eqref{eq:odd_bis} for this  $L$ follows from~\eqref{eq:wk}.
Along the same lines, it can be proved that Theorem \ref{thm:generalized_C} also holds true for $k=1$.
\end{rem}

\end{document}